\definecolor{lred}{RGB}{226, 106, 106}
\definecolor{nred}{RGB}{237, 28, 36}
\definecolor{lblue}{RGB}{52, 152, 219}
\definecolor{nblue}{RGB}{0, 174, 239}
\definecolor{lyellow}{RGB}{232, 197, 91}
\definecolor{dgreen}{RGB}{0, 148, 68}
\definecolor{l1yellow}{RGB}{217, 224, 33}
\definecolor{lgrey}{RGB}{179, 179, 179}
\definecolor{indigo}{rgb}{0.29, 0.0, 0.51}  
\theoremstyle{plain}
\newtheorem{theorem}{Theorem}
\newtheorem{corollary}[theorem]{Corollary}
\newtheorem{proposition}[theorem]{Proposition}
\newtheorem{lemma}[theorem]{Lemma}
\newtheorem{conjecture}[theorem]{Conjecture}
\theoremstyle{definition}
\theoremstyle{remark}
\newtheorem{remark}[theorem]{Remark}
\numberwithin{theorem}{section}
\newcommand{\dfn}[1]{{\em #1}}        
\newcommand{\Q}{\mathbb{Q}}           
\newcommand{\Z}{\mathbb{Z}}           
\newcommand{\N}{\mathbb{N}}           
\DeclareMathOperator{\bd}{\partial}   
\newcommand{\PD}{\mathrm{PD}\,} 
\newcommand*\bigcdot{\mathpalette\bigcdot@{0.6}}
\newcommand*\bigcdot@[2]{\mathbin{\vcenter{\hbox{\scalebox{#2}{$\m@th#1\bullet$}}}}}
\newcommand{\vects}[2]{\left(\begin{smallmatrix} #1 \\ #2 \end{smallmatrix}\right)}  
\newcommand{\matrixp}[4]{{\begin{pmatrix}#1 & #2 \\ #3 & #4\end{pmatrix}}}                  
\DeclareMathOperator\tb{tb}                               
\DeclareMathOperator\rot{rot}                             
\DeclareMathOperator\tw{tw}                               
\DeclareMathOperator{\HFhat}{\widehat{{HF}}}  
\DeclareFontFamily{U} {cmr}{}
\DeclareFontShape{U}{cmr}{m}{n}{
  <-6> cmr5
  <6-7> cmr6
  <7-8> cmr7
  <8-9> cmr8
  <9-10> cmr9
  <10-12> cmr8
  <12-> cmr9}{}
\DeclareSymbolFont{Xcmr} {U} {cmr}{m}{n}
\DeclareMathSymbol{\Phi}{\mathord}{Xcmr}{8}
\begin{document}

\title{Tight contact structures on a family of hyperbolic L-spaces}

\author{Hyunki Min}
\address{Department of Mathematics \\ University of California \\ Los Angeles, CA}
\email{hkmin27@math.ucla.edu}

\author{Isacco Nonino}
\address{Department of Mathematics \\ University of Glasgow \\ Glasgow, Scotland}
\email{Isacco.Nonino@glasgow.ac.uk}

\begin{abstract}
We classify tight contact structures on various surgeries on the Whitehead link, which provides the first classification result on an infinite family of hyperbolic L-spaces. We also determine which of the tight contact structures are Stein fillable and which are virtually overtwisted.
\end{abstract}

\maketitle

\section{Introduction}

A cooriented contact 3-manifold $(M,\xi)$ is {\it overtwisted} if it contains an overtwisted disk, which is an embedded disk tangent to the contact planes along the boundary, and {\it tight} if it does not contain an overtwisted disk. Since Eliashberg \cite{Eliashberg:overtwisted} classified overtwisted contact structures, more recent studies focus on the classification of tight contact structures.

Since tight contact structures respect the prime decomposition of $3$-manifolds by the work of Colin \cite{Colin:prime} (see also \cite{DG:decomp, Honda:gluing}), it is sufficient to consider the classification of tight contact structures on prime manifolds. The geometrization of $3$-manifolds implies that a prime $3$-manifold is either Seifert fibered, toroidal or hyperbolic. For Seifert fibered spaces with two singular fibers, tight contact structures were completely classified, see \cite{Eliashberg, Etnyre:Lens, Giroux:classification, Honda:classification1}. For Seifert fibered spaces with three singular fibers, there have been several classification results, see \cite{EH:nonexistence, GS:classification, GLS:classification, Tosun, Wu} for example. For toroidal case, tight contact structures were classified on torus bundles over $S^1$ and $S^1$-bundles over a surface, see \cite{Giroux:3torus,Giroux:classification, Giroux:classification2, Honda:classification2, Kanda}. Honda, Kazez and Mati\'c \cite{HKM:decomposition} showed that there exist infinitely many tight contact structures up to isotopy on toroidal manifolds and Colin, Giroux and Honda \cite{CGH:finite} showed that a closed orientable irreducible 3-manifold supports infinitely many tight contact structures up to isotopy if and only if it is toroidal.

For hyperbolic manifolds, much less is known although most $3$-manifolds fall into this category \cite{Thurston:hyperbolic}. Honda, Kazez and Mati{\'c} \cite{HKM:hyperbolic} partially classified tight contact structures on hyperbolic surface bundles over $S^1$ when the contact structures have an {\it{extremal}} relative Euler class, meaning that the Euler class of the contact structure evaluated on a fiber surface is maximal. Later, Conway and the first author \cite{CM:figure-eight} classified tight contact structures on (most) surgeries on the figure eight knot in $S^3$, which contain infinitely many hyperbolic $3$-manifolds. 

In this paper we will consider hyperbolic L-spaces. Recall that an {\it L-space} is a rational homology sphere such that the order of the first singular homology group equals the free rank of its Heegaard Floer homology group. In general, it is hard to construct a universally tight contact structure on an L-space because an L-space does not admit a taut foliation, which can be perturbed into a universally tight contact structure. There are some non-hyperbolic L-spaces (\dfn{e.g.}~lens spaces) that support a universally tight contact structure while it is not known for hyperbolic L-spaces yet. Thus it is natural to ask whether there is a hyperbolic L-space that supports a universally tight contact structure, which can be considered as a contact topology version of the L-space conjecture. 

\begin{conjecture}\label{conj:lspace}
	A hyperbolic $3$-manifold is an L-space if and only if it does not support a universally tight contact structure.
\end{conjecture}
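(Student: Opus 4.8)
I read Conjecture~\ref{conj:lspace} as a biconditional whose two directions have very different characters, and I would attack them separately. Writing $M$ for a closed hyperbolic $3$-manifold, the two implications are: ($\Leftarrow$) if $M$ is \emph{not} an L-space then it supports a universally tight contact structure, and ($\Rightarrow$) if $M$ is an L-space then it supports no universally tight contact structure. The organizing principle of the whole plan is to route both implications through the existence of a co-oriented taut foliation on $M$, using the proven fact that a closed oriented $3$-manifold carrying such a foliation is not an L-space, together with the perturbation machinery of Eliashberg--Thurston, which (as noted in the introduction) turns a taut foliation into a universally tight contact structure.

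For the ($\Leftarrow$) direction the plan is to produce a co-oriented taut foliation $\mathcal{F}$ and then perturb it. When $b_1(M)>0$ the foliation is supplied unconditionally by Gabai's theorem; the remaining rational-homology-sphere case is exactly the hard (open) direction of the Boyer--Gordon--Watson L-space conjecture, which asserts that an irreducible rational homology sphere that is not an L-space admits a co-oriented taut foliation. Given $\mathcal{F}$, I would perturb it to a contact structure $\xi$ and argue that $\xi$ is universally tight, which is where hyperbolicity enters cleanly: the foliation lifts to a taut foliation $\widetilde{\mathcal{F}}$ on the universal cover $\mathbb{H}^3 \cong \R^3$, the perturbation is local and commutes with passage to the cover, and the lifted contact structure is the perturbation of $\widetilde{\mathcal{F}}$; tightness of the perturbation upstairs then yields tightness of the pullback of $\xi$, i.e.\ universal tightness of $\xi$. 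The caveat is that this direction is only as unconditional as the relevant case of the L-space conjecture, so in practice I would cite it where it is known and otherwise try to construct $\mathcal{F}$ by hand on the families at issue.

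The ($\Rightarrow$) direction is the genuinely new content and is where I expect the real difficulty. Here I would want the converse of the perturbation philosophy: that a universally tight contact structure on a hyperbolic $M$ forces a co-oriented taut foliation, hence (by the proven direction above) forces $M$ not to be an L-space. This cannot be a soft consequence of the L-space condition, since lens spaces are L-spaces yet support universally tight contact structures, so the argument must use hyperbolicity essentially---presumably through the fact that the universal cover is $\R^3$ rather than $S^3$, making universal tightness on an aspherical manifold a strong geometric constraint. Two avenues seem worth pursuing: (i) a foliation-theoretic route, a converse to Eliashberg--Thurston in the atoroidal setting, that extracts an essential lamination or taut foliation from the universally tight structure via confoliation interpolation; and (ii) a Floer-theoretic route that produces, from universal tightness, an invariant incompatible with the minimal rank of $\HFhat$ forced by the L-space condition. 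For avenue (ii) I would emphasize that mere nonvanishing of the Ozsv\'ath--Szab\'o contact class does not suffice, since an L-space can carry a tight contact structure with nonzero contact invariant; one needs an invariant that genuinely detects tightness in the universal cover.

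In short, the backward implication is, modulo the L-space conjecture, a repackaging of known perturbation results, whereas the forward implication contains all the new difficulty. The single hardest step is converting universal tightness on a hyperbolic manifold into either a taut foliation or a Floer-theoretic obstruction, and I do not expect this to fall to a formal argument. I would therefore treat the explicit families classified in this paper, such as the surgeries on the Whitehead link, as the testing ground on which to decide which of the two avenues is the more tractable.
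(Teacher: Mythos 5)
This statement is a \emph{conjecture} in the paper: the authors offer no proof of it, and indeed the entire point of Conjecture~\ref{conj:lspace} is that it is open. The only evidence the paper supplies is Theorem~\ref{thm:virtually-ot}, which shows that all but at most a bounded number (two counted by $\Psi$, four counted by $\Phi$) of the tight contact structures on $M(n,r)$ are virtually overtwisted; the remark following that theorem explicitly concedes that it remains unknown whether any of the remaining structures are universally tight, so even for this single family the forward implication of the conjecture is not established. Your proposal should therefore be judged not against a proof in the paper but on whether it actually closes the gap, and it does not: it is a research program, and you are candid about that. Both implications remain open after your argument. The backward direction is, as you say, contingent on the Boyer--Gordon--Watson L-space conjecture in the rational homology sphere case (which is the only case relevant here, since L-spaces are rational homology spheres by definition, so Gabai's theorem for $b_1>0$ never applies); the claim that the Eliashberg--Thurston perturbation of a taut foliation is universally tight is indeed the known statement the authors allude to in the introduction, though one should be careful about regularity hypotheses ($C^2$ versus $C^0$ foliations) when citing it.

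The forward direction is where your proposal, by your own admission, contains no argument at all, only two candidate avenues. Your observation that lens spaces block any soft argument is exactly right and is the same tension the authors highlight. But neither avenue is developed: there is no known converse to Eliashberg--Thurston extracting a taut foliation from a universally tight contact structure even on atoroidal manifolds, and no Floer-theoretic invariant is currently known that detects tightness of the pullback to the universal cover. The paper's actual contribution to this question is the concrete, classification-based bound of Theorem~\ref{thm:virtually-ot}, obtained by decomposing each tight structure as $C_n(1)\cup N_r(1)$ or $C_n(\infty)\cup N_r(\infty)$ and invoking Honda's count of universally tight structures on solid tori --- a strategy entirely absent from your proposal and, notably, one that still falls short of proving even one direction of the conjecture for a single manifold. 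In short: there is no proof to compare against, and your proposal, while a fair map of the terrain, does not constitute one either.
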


Notice that none of the hyperbolic manifolds mentioned above are an L-space, and they all admit a taut foliation so support universally tight contact structures. In this paper, we classify tight contact structures on an infinite family of hyperbolic L-spaces and show that most of them are virtually overtwisted.   

Let $M(n,r)$ be the result of $(n,r)$-surgery on the Whitehead link, see Figure~\ref{fig:Weeks1}. We will classify tight contact structures on $M(n,r)$ for surgery coefficients 
\begin{align*}
	&n \in \mathbb{Z}, n \geq 5,\\
	&r \in \mathcal{R}_+ \cup \mathbb{Q}_{-}, 
\end{align*}
where $\mathbb{Q}_-$ is the set of negative rational numbers and 
\begin{align*}
	&\mathcal{R}_+ = ([2,4) \cup [5,\infty)) \cap \mathbb{Q}.
\end{align*}
Recall that the Whitehead link is a hyperbolic L-space link. In particular, Liu \cite[Proposition 6.4]{Liu:lspace} showed that $M(r_1,r_2)$ is an L-space if and only if $r_1,r_2 > 0$. Moreover, there are only finitely many (26) isolated exceptional Dehn fillings of the Whitehead link \cite{Martelli:exceptional,Thurston:exceptional}. Thus our family of $M(n,r)$ contains infinite many hyperbolic L-spaces.

Before stating the results, we define two functions $\Phi(r)$ and $\Psi(r)$. For $0 < r \leq 1$, we write the negative continued fraction of $-\frac 1r$ as follows:
\[ 
	-\frac 1r = [r_0, \ldots, r_n] = r_0 - \frac1{r_1 - \frac1{\ddots \, - \frac1{r_n}}}, 
\]
where $r_0 \leq -1$, and $r_i \leq -2$ for $i = 1, \ldots, n$. Then define
\[ 
	\Phi(r) = \left|r_0(r_1+1)\cdots(r_n+1)\right|.
\]
We define $\Phi$ on all of $\Q$ by setting $\Phi(r+1) = \Phi(r)$.  Then we define $\Psi(r)$ for $r \in \Q$ by
\begin{align*}
	\Psi(r) = \begin{cases}0 & r=1 \\ \Phi(\frac1{1-r}) & r\neq 1 \end{cases}.
\end{align*}
Now we are ready to state the main theorem of the paper.

\begin{theorem}\label{thm:main}
	Let $n \geq 5$ be an integer. Then $M(n,r)$ supports
	\begin{align*}
		\begin{cases}
		\Psi(r) + 2\Phi(r)	& r \in \mathcal{R}_+\\
		\Psi(r)	& r \in \mathbb{Q}_- 
		\end{cases}
	\end{align*}
	tight contact structures up to isotopy, distinguished by their contact invariants in Heegaard Floer homology.
\end{theorem}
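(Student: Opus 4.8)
The plan is to realize $M(n,r)$ as Dehn surgery on a knot and reduce the classification to a gluing problem along a convex torus. Since the first component $W_1$ of the Whitehead link is unknotted, the $n$-surgery produces the lens space $L(n,1)$, inside which the second component descends to a knot $K_n$; then $M(n,r)$ is obtained from the complement $E := L(n,1)\setminus N(K_n)$ by gluing a solid torus $V$ along the surgery slope determined by $r$. Here $E$ is precisely the Whitehead exterior with the $W_1$-cusp filled, so for $n\ge 5$ it is a once-cusped hyperbolic manifold, while $\partial V$ is compressible in $M(n,r)$ and hence does not contradict atoroidality. After isotoping $\partial V$ to be convex, every tight contact structure on $M(n,r)$ restricts to tight structures on $E$ and on $V$ sharing a boundary dividing slope $s$, and conversely any compatible pair glues. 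Thus I would count, for each admissible $s$, the product of the numbers of tight structures on $V$ and on $E$ with boundary slope $s$, sum over $s$, and identify structures related by bypass attachment (edge-rounding) across $\partial V$.

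On the solid-torus side the count is supplied by Honda's classification \cite{Honda:classification1}: the number of tight contact structures on $V$ with convex boundary of two dividing curves of slope $s$ is the usual continued-fraction product attached to the negative continued fraction of $s$, with the meridional endpoint contributing a distinguished factor. After converting the surgery coefficient into framing coordinates on $\partial N(K_n)$, these products are exactly the quantities $\Phi(r)$ and $\Psi(r)$ defined above, with $\Psi(r)=\Phi\!\left(\tfrac1{1-r}\right)$ recording the shift to the relevant slope. I expect two families of admissible slopes to contribute: a background family contributing $\Psi(r)$, and a distinguished family—present exactly when $r\in\mathcal R_+$, i.e.\ precisely when $M(n,r)$ is an L-space by \cite[Proposition 6.4]{Liu:lspace}—that contributes $\Phi(r)$ for each of the two tight structures surviving on $E$, giving the term $2\Phi(r)$.

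The principal obstacle is the classification of tight contact structures on the hyperbolic piece $E$ as a function of the boundary slope. Because $E$ is atoroidal, one cannot cut along essential tori; instead I would fix an essential surface $\Sigma\subset E$ inherited from the Whitehead exterior (for instance the punctured Seifert surface of $W_1$, or the fiber of a fibration when $E$ fibers), make it convex, and run a bypass analysis on $\Sigma$ together with $\partial V$, in the spirit of \cite{HKM:hyperbolic, CM:figure-eight}. The goal is to show that for each admissible slope $E$ carries at most the predicted number of tight structures—exactly two for the distinguished family, and the correct multiplicity otherwise—by pinning down the dividing set of $\Sigma$ and ruling out all but finitely many configurations. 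This would yield the upper bounds $\Psi(r)+2\Phi(r)$ for $r\in\mathcal R_+$ and $\Psi(r)$ for $r\in\Q_-$; the hypothesis $n\ge 5$ enters here to ensure hyperbolicity and to force the bypass bookkeeping into the stated form.

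For the matching lower bound I would produce the required tight structures explicitly via Legendrian surgery diagrams (equivalently, supporting open books) adapted to the above decomposition, and distinguish them using the Heegaard Floer contact invariant $c(\xi)\in\HFhat(-M(n,r))$. Since $M(n,r)$ is an L-space for $r>0$, $\HFhat(-M(n,r),\spinc)$ has rank one in each $\spinc$ structure, so it suffices to compute the $\spinc$ decomposition of the constructed invariants and show that they are nonzero (yielding tightness) and occupy distinct $\spinc$ summands (yielding pairwise non-isotopy); the analogous computation treats $r\in\Q_-$. Reconciling this $\spinc$-bookkeeping with the gluing count of the previous paragraphs then closes the gap between the upper and lower bounds and completes the proof.
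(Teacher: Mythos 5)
Your proposal follows essentially the same route as the paper: decompose $M(n,r)$ into the exterior of the genus one fibered knot that $K_2$ becomes in the lens space obtained by filling $K_1$, bound the tight structures on that exterior by a bypass analysis on the convex once-punctured torus fiber in the spirit of \cite{CM:figure-eight}, count the solid-torus side by Honda's classification to produce the factors $\Phi(r)$ and $\Psi(r)$, and realize the matching lower bound by contact surgery diagrams whose Heegaard Floer contact invariants are nonzero and pairwise distinct. Two small corrections to your framing: $\mathcal{R}_+$ is \emph{not} the L-space range ($M(n,r)$ is an L-space for all $r>0$ by Liu); the exclusion of $(0,2)$ and $[4,5)$ is a technical artifact of the thickening argument on the exterior (which fails at slopes $\tfrac{4k+1}{k}$), and the assertion that ``any compatible pair glues'' is an overstatement---tightness of the glued structure is exactly what your separate lower-bound construction must, and does, supply.
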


\begin{figure}[htbp]
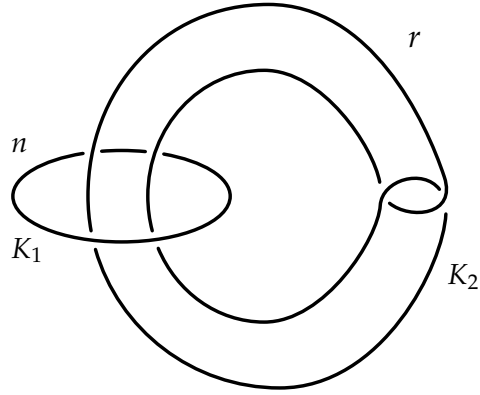

\begin{center}
	\vspace{0.5cm}
	\begin{overpic}[tics=20]{figures/whitehead}
		\put(0,90){$n$}
		\put(0,50){$K_1$}
		\put(150,130){\large $r$}
		\put(165,40){$K_2$}
	\end{overpic}
	\caption{$(n,r)$-surgery on the Whitehead link.}
	\label{fig:Weeks1}
\end{center}
\end{figure}

\begin{remark}
	For $n > 2$ and $r\geq1$ (\emph{resp.}~$r<1$), we can construct at least $\Psi(r) + 2\Phi(r)$ (\emph{resp.}~$\Psi(r)$) tight contact structures on $M(n,r)$ even if $n < 5$ or $r \notin \mathcal{R}_+$ (\emph{resp.}~$r \notin \mathbb{Q}_-$), see Section~\ref{sec:lower-bounds}. However, there could be more.
\end{remark}

We will prove Theorem~\ref{thm:main} by estimating an upper bound using convex surface decompositions, and realizing this upper bound via contact surgery. We construct tight contact structures in Section~\ref{sec:lower-bounds} by using contact surgery diagrams and distinguish them by using the contact invariant in Heegaard Floer homology. In Section~\ref{sec:upper-bounds}, we use a convex decomposition for a punctured genus one surface bundle over $S^1$ with a pseudo-Anosov monodromy, which was first introduced by Etnyre and Honda \cite{EH:knots} and developed by Conway and the first author \cite{CM:figure-eight} for the figure-eight knot complement. In this paper, we generalize this technique for any pseudo-Anosov monodromy $\phi$ with $-1 < c(\phi) <1$ where $c(\phi)$ is the fractional Dehn twist coefficient of $\phi$. We expect this technique will apply to various monodromies and lead to the classification of tight contact structures on a broad class of hyperbolic $3$-manifolds.      

Recall that $M(n,0)$ is a torus bundle over $S^1$ (see Section~\ref{subsec:decomp}). Since $\Phi(m) = 1$ and $\Psi(m) = 2$ for any integer $m > 2$ and $\Psi(m) = |m-1|$ for any integer $m < 0$, we have the following results for integer surgeries. 

\begin{corollary}
	Let $n \geq 5$ and $m \neq 1, 4$ be integers. Then $M(n,m)$ supports  
	\[
	\begin{cases}
		|m-1| & m < 0,\\
		\infty & m = 0,\\
		3 & m = 2,\\
		4	& m > 2,\, m \neq 4\\
	\end{cases}
	\]
	tight contact structures up to isotopy.
\end{corollary}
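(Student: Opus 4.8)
The plan is to derive the Corollary directly from Theorem~\ref{thm:main} by specializing the formula $\Psi(r) + 2\Phi(r)$ (for $r \geq 1$) or $\Psi(r)$ (for $r < 0$) to integer values of $r = m$, using the stated evaluations $\Phi(m) = 1$ and $\Psi(m) = 2$ for integers $m > 2$, and $\Psi(m) = |m-1|$ for integers $m < 0$. The only genuine content is (i) computing the counts in each range, (ii) justifying the evaluations of $\Phi$ and $\Psi$ at integers, and (iii) handling the $m = 0$ case, which falls outside the scope of Theorem~\ref{thm:main} and must be treated separately.

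First I would dispose of the integer evaluations of $\Phi$ and $\Psi$. Since $\Phi$ is defined to be periodic with period $1$ and an integer $m$ is congruent mod $1$ to $0$, or equivalently to any integer, I would reduce to computing $\Phi$ on a single representative in $(0,1]$; the continued fraction of $-1/r$ for the relevant representative is a single term $[r_0]$, giving $\Phi = |r_0| = 1$, hence $\Phi(m) = 1$ for all integers. For $\Psi(m)$ with $m > 2$, I would compute $\Psi(m) = \Phi\!\left(\frac{1}{1-m}\right)$; since $\frac{1}{1-m}$ lies in $(0,1]$ (up to the integer shift built into $\Phi$) and its negative reciprocal $-(1-m) = m-1$ is itself an integer $\geq 2$, the negative continued fraction is again a single term, so $\Phi = 2$, matching the claim $\Psi(m) = 2$. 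For $m < 0$, the same computation of $\frac{1}{1-m}$ gives a negative continued fraction whose product telescopes to $|m-1|$, as asserted in the paragraph preceding the Corollary.

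Next I would substitute these values into the two branches of Theorem~\ref{thm:main}. For $m < 0$ (so $m \in \mathbb{Q}_-$), the count is simply $\Psi(m) = |m-1|$. For integers $m > 2$ with $m \neq 4$, one checks $m \in \mathcal{R}_+ = ([2,4) \cup [5,\infty)) \cap \mathbb{Q}$, so the count is $\Psi(m) + 2\Phi(m) = 2 + 2 \cdot 1 = 4$. For $m = 2$, again $2 \in \mathcal{R}_+$, but now $\Psi(2) + 2\Phi(2) = 0 + 2 \cdot 1 = 2$? Here I must be careful: by the definition $\Psi(2) = \Phi\!\left(\frac{1}{1-2}\right) = \Phi(-1) = \Phi(0) = 1$, so in fact $\Psi(2) = 1$ and the count is $1 + 2 = 3$, matching the Corollary. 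This discrepancy with the naive ``$\Psi(m)=2$'' rule is exactly why $m=2$ is listed as a separate case: the rule $\Psi(m) = 2$ holds only for $m > 2$, and I would flag this explicitly so the $m = 2$ line is seen to come from a direct evaluation rather than the general integer formula.

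Finally, the $m = 0$ case requires a separate argument since $0 \notin \mathbb{Q}_-$ and $0 \notin \mathcal{R}_+$, so Theorem~\ref{thm:main} does not apply. As noted in the text, $M(n,0)$ is a torus bundle over $S^1$, and by Honda, Kazez, and Mati\'c together with the results of Colin, Giroux, and Honda cited in the introduction, a toroidal manifold such as a torus bundle supports infinitely many tight contact structures up to isotopy; hence the count is $\infty$. I expect the main obstacle, if any, to be the bookkeeping around the boundary value $m = 2$ and ensuring the integer evaluations of $\Psi$ are applied consistently across the three regimes; the $m = 0$ entry, by contrast, is immediate once one invokes the toroidal structure.
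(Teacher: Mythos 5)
Your proposal matches the paper's (essentially one-line) derivation of this corollary: evaluate $\Phi$ and $\Psi$ at integers, substitute into Theorem~1.2, and handle $m=0$ separately via the fact that $M(n,0)$ is a torus bundle over $S^1$, hence toroidal and supporting infinitely many tight contact structures; your treatment of the $m=2$ boundary case is also correct. The one slip is in your justification of $\Psi(m)=2$ for integers $m>2$: after shifting $\frac{1}{1-m}=-\frac{1}{m-1}$ into $(0,1]$ one gets $\frac{m-2}{m-1}$, and the relevant continued fraction $-\frac{m-1}{m-2}=[-2,\ldots,-2]$ has $m-2$ terms rather than a single term, so $\Phi=\bigl|(-2)(-1)^{m-3}\bigr|=2$; a genuine single-term expansion $[r_0]$ would force $\Phi=|r_0|=|m-1|$, contradicting the value you (correctly) assert.
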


Recall that $M(5,\frac52)$ is the Weeks manifold, known to have the smallest volume among closed hyperbolic $3$-manifolds \cite{GMR:minvolume}. Since $\Phi(\frac52)=2$ and $\Psi(\frac52) = 3$, we obtain the following result.
\begin{corollary}
	The Weeks manifold supports seven tight contact structures up to isotopy.
\end{corollary}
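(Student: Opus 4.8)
The plan is to derive this directly from Theorem~\ref{thm:main}, so the task reduces to placing the Weeks manifold inside the family $M(n,r)$ and evaluating $\Phi$ and $\Psi$ at the relevant coefficient. First I would invoke the surgery description of the Weeks manifold as $M(5,\tfrac52)$ recalled above (see \cite{GMR:minvolume}); this is the only input drawn from outside the theorem. With this identification in hand, I would verify that the hypotheses of Theorem~\ref{thm:main} hold: the integral coefficient is $n = 5 \geq 5$, and the rational coefficient $r = \tfrac52$ lies in $[2,4) \subset \mathcal{R}_+$. Hence the first branch of the theorem applies, and $M(5,\tfrac52)$ supports exactly $\Psi(\tfrac52) + 2\Phi(\tfrac52)$ tight contact structures up to isotopy, distinguished by their contact invariants.

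It then remains to compute the two quantities. For $\Phi(\tfrac52)$, I would first use the periodicity $\Phi(r+1) = \Phi(r)$ to reduce to $\Phi(\tfrac12)$; the negative continued fraction of $-1/\tfrac12 = -2$ is the single term $[-2]$, so the defining product collapses to $\Phi(\tfrac12) = |{-2}| = 2$. For $\Psi(\tfrac52)$, since $\tfrac52 \neq 1$ the nontrivial branch of the definition gives $\Psi(\tfrac52) = \Phi\!\left(\tfrac{1}{1-\frac52}\right) = \Phi(-\tfrac23)$; one more application of periodicity yields $\Phi(-\tfrac23) = \Phi(\tfrac13)$, and as $-1/\tfrac13 = -3 = [-3]$ we obtain $\Phi(\tfrac13) = 3$, whence $\Psi(\tfrac52) = 3$.

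Substituting into the first branch of Theorem~\ref{thm:main} gives $\Psi(\tfrac52) + 2\Phi(\tfrac52) = 3 + 2\cdot 2 = 7$, the claimed count. There is no real obstacle here beyond bookkeeping: the single non-formal ingredient is the surgery identification of the Weeks manifold, which is standard, and everything else is the mechanical evaluation of the short negative continued fractions defining $\Phi$ and $\Psi$.
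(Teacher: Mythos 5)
Your proposal is correct and follows exactly the paper's route: identify the Weeks manifold with $M(5,\tfrac52)$, note $n=5\geq 5$ and $\tfrac52\in[2,4)\subset\mathcal{R}_+$, and evaluate $\Phi(\tfrac52)=2$ and $\Psi(\tfrac52)=3$ to get $3+2\cdot 2=7$ from Theorem~\ref{thm:main}. The continued-fraction computations match the values the paper asserts, so there is nothing to add.
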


In addition to classifying tight contact structures, we also determine whether these tight contact structures are Stein fillable.

\begin{theorem}\label{thm:fillability}
	Let $n \geq 5$ be an integer and $r \in \mathcal{R}_+ \cup \mathbb{Q}_-$. Then every tight contact structure on $M(n,r)$ is Stein fillable.
\end{theorem}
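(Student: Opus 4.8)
The plan is to combine the explicit constructions of Section~\ref{sec:lower-bounds} with the classification in Theorem~\ref{thm:main}. The key principle is that Stein fillability is preserved under Legendrian surgery: attaching a Weinstein $2$-handle along a Legendrian knot $L$ in the convex boundary of a Stein domain extends the Stein structure, so contact $(-1)$-surgery (that is, topological surgery with framing $\tb(L)-1$) on a Legendrian link in a Stein fillable contact manifold again yields a Stein fillable contact manifold, by work of Eliashberg and Weinstein. Since $(S^3,\xi_{\mathrm{std}})$ bounds the standard Stein ball $B^4$, every contact $3$-manifold presented by a Legendrian surgery diagram in $(S^3,\xi_{\mathrm{std}})$ is Stein fillable.

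First I would show that the tight contact structures constructed in Section~\ref{sec:lower-bounds} all arise from Legendrian surgery on Legendrian links in $(S^3,\xi_{\mathrm{std}})$. For the negative coefficients $r\in\Q_-$ this is essentially built into the definition of $\Phi$: the negative continued fraction $[r_0,\ldots,r_n]$ with $r_i\leq -2$ is precisely a linear chain of Legendrian unknots, each Legendrian-realized so that its contact surgery coefficient equals $-1$, and the choices of stabilization recover the counted contact structures. After converting the surgery presentation of $M(n,r)$ into such a form and Legendrian-realizing each component with Thurston--Bennequin invariant one greater than its topological framing, each constructed contact structure is Stein fillable by the principle above.

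Second I would invoke Theorem~\ref{thm:main}. The constructions of Section~\ref{sec:lower-bounds} already produce $\Psi(r)+2\Phi(r)$ (for $r\in\mathcal{R}_+$), respectively $\Psi(r)$ (for $r\in\Q_-$), pairwise non-isotopic tight contact structures, distinguished by their contact invariants, and this matches the total number of tight contact structures on $M(n,r)$ up to isotopy. Hence every tight contact structure on $M(n,r)$ is isotopic to one of the Legendrian-surgery structures above; since Stein fillability is an invariant of the isotopy class, the conclusion follows.

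The main obstacle is the first step, and specifically the positive surgery coefficients. The integer coefficient $n\geq 5$ and the positive rationals $r\in\mathcal{R}_+$ cannot be realized as Legendrian surgery on the original Whitehead link components directly, since those unknotted components satisfy $\tbb=-1$ and so force contact coefficients that are too negative. The resolution is to replace the surgery diagram by a topologically equivalent one --- via Rolfsen twists, slam-dunks, and handle slides, using that the Whitehead link has linking number zero --- in which every framing lies at most one below an achievable Thurston--Bennequin number, and then to check that the resulting Legendrian representatives together with their permitted stabilizations reproduce exactly the isotopy classes distinguished in Theorem~\ref{thm:main}. Verifying that this Legendrian surgery diagram is indeed $M(n,r)$ and that no tight contact structure is omitted is the technical heart of the argument.
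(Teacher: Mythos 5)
Your overall strategy --- show the explicitly constructed contact structures are Stein fillable and then invoke Theorem~\ref{thm:main} to conclude that these exhaust all tight contact structures --- is the right frame, and your treatment of $r \in \mathbb{Q}_-$ is correct: there the diagram of Figure~\ref{fig:legendrian1} has only negative contact surgery coefficients, so after conversion it is a genuine Legendrian surgery diagram in $(S^3,\xi_{std})$ and Stein fillability is immediate. The gap is in the case $r \in \mathcal{R}_+$, which you correctly flag as ``the main obstacle'' but do not actually resolve. For $r \geq 2$ the diagrams of Figures~\ref{fig:legendrian1} and~\ref{fig:legendrian2} each carry a \emph{positive} contact surgery coefficient on the trefoil component ($(r-1)$ and $(n-1)$ respectively), so after the Ding--Geiges--Stipsicz conversion one component has contact surgery coefficient $(+1)$; these contact structures are therefore \emph{not} presented as Legendrian surgery on a link in $(S^3,\xi_{std})$, and your proposed fix --- massage the smooth diagram by Rolfsen twists and handle slides until every framing is at most one below an achievable Thurston--Bennequin number, then match stabilizations to isotopy classes --- is precisely the unproven technical heart. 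There is no argument given that such a diagram exists, nor that it would realize all $\Psi(r)+2\Phi(r)$ isotopy classes, and this cannot be waved through: $M(n,r)$ for $n,r>0$ arises from positive surgery, and producing a Weinstein handle presentation over $B^4$ is not a routine diagram manipulation.

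The paper takes a different route for exactly this reason, and it splits into two genuinely distinct arguments. For the structures counted by $\Phi$, the positive surgery is absorbed into the trefoil factor: the diagram is a Legendrian surgery on knots inside a tight contact structure on $S^3_n(T_{2,3})$, and the classification in \cite{EMT:torus} (an external input you do not have access to via diagram manipulations) shows every tight contact structure on $S^3_n(T_{2,3})$ is Stein fillable for $n \geq 5$. For the structures counted by $\Psi$, the paper first shows every such structure with $r>2$ is obtained by negative contact surgery on a Legendrian knot in the single structure $(M(n,2),\xi_2)$, and then proves $(M(n,2),\xi_2)$ is Stein fillable by exhibiting an explicit open book whose monodromy $D_{\alpha} D_{\beta} D_{\alpha}^{n-1}D_{\gamma}^{-1}\Delta$ admits a positive factorization via the Korkmaz--Ozbagci relation $\Delta = D_{\gamma} D_{\alpha}^{-2} D_{\delta} D_{\sigma}$ (using $n \geq 5$ to cancel the negative twists). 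Neither of these inputs --- the fillability classification on surgeries on the trefoil, nor the mapping class group relation --- appears in your argument, and without something playing their role the positive-coefficient case remains open.
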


Lastly, we consider Conjecture~\ref{conj:lspace} for $M(n,r)$. We provide a lower bound on the number of tight contact structures on $M(n,r)$ that are virtually overtwisted.   

\begin{theorem}\label{thm:virtually-ot}
	Let $n \geq 5$ be an integer and $r \in \mathcal{R}_+ \cup \mathbb{Q}_-$. Then $M(n,r)$ supports at least 
	\begin{align*}
	\begin{cases}
		\Psi(r) + 2\Phi(r) - 6 & r \in \mathcal{R}_+\\
		\Psi(r) - 2 & r \in \mathbb{Q}_-
	\end{cases}
	\end{align*}
	virtually overtwisted contact structures up to isotopy.
\end{theorem}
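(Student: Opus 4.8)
The plan is to read Theorem~\ref{thm:virtually-ot} off of the classification in Theorem~\ref{thm:main} by isolating the handful of tight contact structures for which no virtual-overtwistedness argument is available, bounding their number by $6$ (resp.\ $2$), and producing an explicit finite cover that is overtwisted for all of the remaining ones. Recall that the classification realizes each tight $\xi$ on $M(n,r)$ through the convex decomposition along the surgery torus $T=\partial V$, where $V$ is the solid torus glued in along $K_2$: the structure $\xi$ is recorded by its restriction to the fibered piece together with the basic-slice decomposition of $\xi|_V$, and the quantities $\Phi(r)$ and $\Psi(r)$ count the admissible sign sequences on these basic slices, taken modulo shuffling within each continued-fraction block. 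I would organize the whole argument around this sign data.

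First I would single out the \emph{homogeneous} structures, those whose basic-slice signs can be shuffled to be all positive or all negative within each continued-fraction factor; these are precisely the candidates for universal tightness, hence the ones my method cannot certify as virtually overtwisted. Each of $\Phi$ and $\Psi$ is the number of sign sequences of a single continued-fraction block, of which at most two are homogeneous (all $+$ or all $-$); overcounting here is harmless since we only need an upper bound on the exceptional structures. Tallying across the factors making up the total: in the range $r\in\mathcal{R}_+$ the count $\Psi(r)+2\Phi(r)=\Psi(r)+\Phi(r)+\Phi(r)$ comprises three such blocks and so at most $3\times 2=6$ homogeneous representatives, while in the range $r\in\mathbb{Q}_-$ the count $\Psi(r)$ is a single block contributing at most $2$. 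Setting these aside accounts exactly for the subtracted $6$ and $2$, so it remains to prove that every \emph{non-homogeneous} $\xi$ is virtually overtwisted.

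For the non-homogeneous case I would invoke the standard mechanism by which opposite-sign basic slices become overtwisted in a finite cyclic cover, modeled on the lens-space computation of Honda and Giroux. After shuffling, arrange a positive and a negative basic slice to be adjacent inside $V$, forming a minimally twisting $T^2\times I$ whose two boundary slopes agree. I would then build a finite cover $\widetilde M\to M(n,r)$, determined by a surjection $\pi_1(M(n,r))\to\mathbb{Z}/k$, that restricts on $V$ to the $k$-fold cover unwrapping the $S^1$-direction along which the dividing curves of $T$ run. Under such a cover the two opposite-sign slices lift to a configuration in which the contact planes make more than a half-rotation across a meridional disk of the unwrapped solid torus, forcing an overtwisted disk in $\widetilde M$ and thereby showing $\xi$ to be virtually overtwisted. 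Since the covering degree $k$ depends only on the slopes of the shuffled pair, the same cover handles every non-homogeneous $\xi$ at once.

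The main obstacle is the globalization of this local picture. The cover unwrapping $V$ must extend to an honest finite cover of the closed hyperbolic manifold $M(n,r)$, and the lifted contact structure must be shown to actually contain the overtwisted disk rather than merely to fail universal tightness. I would obtain the extension by checking that the relevant homology class survives in $H_1(M(n,r);\mathbb{Z}/k)$, so that the cyclic cover of $V$ is induced from a global cyclic cover; where this homological obstruction vanishes, I would instead promote the cover of the incompressible torus $T$ to a finite cover of $M(n,r)$ using the LERF property of hyperbolic $3$-manifold groups. The delicate step is verifying that the accumulated twisting genuinely exceeds $\pi$ in $\widetilde M$ and is not reabsorbed when $\xi|_V$ is glued to the pseudo-Anosov fibered complement; I expect this to require tracking the boundary slopes of the shuffled pair through the covering, exactly as in the basic-slice cover arguments for lens spaces but now carried out relative to the fibered piece.
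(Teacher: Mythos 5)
Your counting of the exceptional structures is essentially the paper's: the $\Psi$-type structures are determined by the tight structure on the solid torus $N_r(1)$, of which at most two are universally tight, and the $\Phi$-type structures are a product of two choices on $C_n(\infty)$ with $\Phi(r)$ choices on $N_r(\infty)$, again with at most two universally tight solid-torus factors; this yields the $6$ and the $2$. (Note that the factor of $2$ in $2\Phi(r)$ comes from the two tight structures on the fibered complement $C_n(\infty)$, not from a second continued-fraction block, but the arithmetic is the same.) Where you diverge is in certifying that the remaining structures are virtually overtwisted, and there your plan is both heavier than necessary and not actually complete. The paper constructs no explicit cyclic cover and computes no twisting: it quotes Honda's Proposition~5.1(2) from \cite{Honda:classification1}, which says that if a tight contact structure on a solid torus is virtually overtwisted (equivalently, its basic-slice signs cannot be homogenized), then \emph{every} nontrivial cover of that solid torus already contains an overtwisted disk. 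Combined with the observation that the surgery dual knot $K_*$ (the core of $N_r$) is nontrivial in $\pi_1(M(n,r))$, any cover of $M(n,r)$ unwrapping $K_*$ restricts over $N_r$ to a nontrivial, hence overtwisted, cover of the solid torus, and an overtwisted disk in a subset is an overtwisted disk in the whole cover.

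In particular, two of your worries are misplaced. First, there is no need to engineer a cover of a specific degree $k$ adapted to the slopes of the shuffled pair, nor to fall back on $H_1(M(n,r);\mathbb{Z}/k)$ or LERF: for $r=p/q$ with $|p|=1$ the core $K_*$ is null-homologous, so your homological criterion genuinely fails there, but nontriviality of $K_*$ in $\pi_1$ together with residual finiteness already produces some finite cover unwrapping it, and Honda's result applies to whichever nontrivial cover of the solid torus is induced. Second, the concern that the excess twisting could be ``reabsorbed when $\xi|_V$ is glued to the pseudo-Anosov fibered complement'' reflects a misconception: the overtwisted disk lives entirely inside the preimage of $V$, so nothing outside $V$ can interfere. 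As written, your proof of the key step --- that a non-homogeneous sign sequence forces overtwistedness upstairs --- is only a sketch of the lens-space mechanism with the hard verification explicitly deferred; either carry that computation out in full or, better, cite Honda's Proposition~5.1(2), at which point your argument collapses to the paper's.
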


\begin{remark}
  There are some cases (\emph{e.g.}~$r \ge 2$ integers) in which $\Psi(r)+2\Phi(r)$ is smaller than $6$. In these cases, it is uncertain whether any tight contact structure on $M(n,r)$ is universally tight or virtually overtwisted.
\end{remark}

\subsection*{Acknowledgments}
The authors thank James Conway and John Etnyre for their helpful comments on the first draft, Andy Wand and Ana Lecuona for their useful advices and Kenneth Baker for interesting comments on genus one fibered knots in lens spaces and their monodromies. The authors are also grateful to Lisa Piccirillo for a helpful discussion on surgery operations.

\section{Contact Topology Preliminaries} \label{sec:preliminaries}

We assume that the reader is familiar with basic $3$-dimensional contact topology \cite{Etnyre:contactlectures}, including Legendrian knots \cite{Etnyre:knots}, contact surgery, open book decomposition \cite{Etnyre:openbook}, convex surface theory \cite{Etnyre:convex,Honda:classification1}, and Heegaard Floer homology \cite{OS:HF2,OS:HF1}. In this section, we briefly review the results that we will frequently use.

\subsection{Convex surfaces} 
Let $(M,\xi)$ be a contact 3-manifold. Recall that a {\it contact vector field} is a vector field of which the flow preserves $\xi$. An orientable surface $\Sigma$ is called {\it convex} if there exists a contact vector field transverse to $\Sigma$. If $\Sigma$ has boundary, then we assume that $\bd\Sigma$ is Legendrian with negative twisting number with respect to the surface framing. According to \dfn{Giroux's flexibility theorem}, any embedded surface can be $C^\infty$-perturbed (rel boundary) into a convex surface. 

Given a convex surface $\Sigma$, the \dfn{dividing set} $\Gamma_{\Sigma}$ is a set of points on $\Sigma$ where the contact vector field $X$ is tangent to $\xi$. It can be shown that $\Gamma_{\Sigma}$ is an embedded multicurve on $\Sigma$. The dividing set divides $\Sigma$ into two regions: 
\[  
  \Sigma \setminus \Gamma_{\Sigma} = R_+ \cup R_-
\]
where $R_+ = \{p \in \Sigma : \alpha(X_p) > 0 \}$ and $R_- = \{p \in \Sigma : \alpha(X_p) < 0 \}$. If $\Sigma$ has Legendrian boundary, then $\tb(\bd\Sigma) = -\frac12 |\Gamma_{\Sigma} \cap \bd\Sigma|$ and $\rot(\bd\Sigma) = \chi(R_+) - \chi(R_-)$. 

We denote the twisting number of a Legendrian curve $\gamma$ with respect to a framing $F$ by $\tw(\gamma, F)$. We can easily compute the twisting number of a closed Legendrian curve on a convex surface.

\begin{theorem}[Honda \cite{Honda:classification1}]\label{thm:twisting} 
	Let $\gamma$ be an embedded Legendrian closed curve on a convex surface $\Sigma$ with a dividing set $\Gamma_{\Sigma}$. Then, 
	\[
		\tw(\gamma,\Sigma)=-\frac{|\gamma\cap\Gamma_{\Sigma}|}{2}
	\]
\end{theorem}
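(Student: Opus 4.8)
The plan is to reduce the statement to an explicit computation in the standard neighborhood of a convex surface, where the twisting number becomes a relative winding number of the contact framing against the surface framing. By Giroux's theorem the contact structure near $\Sigma$ is invariant under a contact vector field $\partial_t$ transverse to $\Sigma$, so on $\Sigma \times \R_t$ I may write the contact form as $\alpha = \beta + u\,dt$, where $\beta$ is a $1$-form and $u$ a function on $\Sigma$, both independent of $t$. In this model the dividing set is $\Gamma_\Sigma = \{u = 0\}$, and the contact condition $\alpha \wedge d\alpha > 0$ becomes the positivity of $u\,d\beta + \beta \wedge du$ as an area form on $\Sigma$. After a generic $C^\infty$-perturbation (again via Giroux flexibility) I may assume $\gamma$ meets $\Gamma_\Sigma$ transversely; note that $|\gamma \cap \Gamma_\Sigma|$ is then even, consistent with the integrality of $\tw(\gamma,\Sigma)$.

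First I would interpret $\tw(\gamma,\Sigma)$ as a winding number. Along $\gamma$ the normal bundle in $M$ is trivialized by $\partial_t$ together with the in-surface normal $w \in T\Sigma$; the surface framing is the section $w$, while the contact framing is the normal vector lying in $\xi$. Solving $\alpha(a\,w + b\,\partial_t) = 0$ and using $\beta(\partial_t) = 0$ gives the contact framing $v_\xi = u\,w - \beta(w)\,\partial_t$, i.e.\ the vector $(u, -\beta(w))$ in the basis $(w, \partial_t)$. Hence $\tw(\gamma,\Sigma)$ equals the winding number of the loop $s \mapsto (u(s), -\beta(w)(s))$ about the origin. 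This vector lies on the vertical axis exactly when $u = 0$, that is, precisely at the points of $\gamma \cap \Gamma_\Sigma$, and there it is nonzero: a common zero of $u$ and $\beta(w)$ would force $u=0$ and $\beta = 0$ simultaneously at that point (since $\beta(\gamma') = 0$ by the Legendrian condition and $\{\gamma',w\}$ spans $T\Sigma$), contradicting $u\,d\beta + \beta\wedge du > 0$. Thus the winding number is computed by the signed count of these axis crossings, each contributing $\pm\tfrac12$.

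The heart of the argument, and the step I expect to be the main obstacle, is to show that all of these crossings carry the \emph{same} sign; this is exactly where the contact condition enters, as the monotonicity of the rotation of the contact planes as one crosses the dividing set. I would establish it by differentiating the angle of $(u, -\beta(w))$ at a crossing point $s_0$ where $u(s_0)=0$: a short computation reduces the sign of $\dot\theta(s_0)$ to that of $\beta(w)\,\dot u$, while evaluating the contact condition $\beta \wedge du > 0$ (its restriction to $\{u=0\}$) on the positively oriented frame $(\gamma', w)$ yields $-\beta(w)\,\dot u > 0$. Hence the angle is strictly decreasing at every crossing, so each contributes $-\tfrac12$ and the total winding number is $-\tfrac12|\gamma \cap \Gamma_\Sigma|$, which is the claimed formula. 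The only remaining care is bookkeeping of the orientation conventions so that this sign matches the one used for $\tw$ in the paper; once the uniform monotonicity is in hand, these conventions are routine.
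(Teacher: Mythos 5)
This statement is quoted in the paper as background from Honda's classification paper and is not proved there, so there is no in-paper argument to compare against; your proposal is essentially the standard proof from the literature (Kanda, Honda). The argument is correct: the $t$-invariant model $\alpha=\beta+u\,dt$, the identification of the contact framing with $(u,-\beta(w))$, the nonvanishing of this vector via the contact condition on $\{u=0\}$, and the uniform sign of $\dot\theta=\dot u/\beta(w)$ coming from $(\beta\wedge du)(\gamma',w)=-\beta(w)\,\dot u>0$ are all the right steps and fit together as claimed. One minor simplification: the transversality of $\gamma$ with $\Gamma_\Sigma$ need not be arranged by perturbation, since a Legendrian curve on $\Sigma$ is tangent to $\ker(\beta|_{T\Sigma})$ while $T\Gamma_\Sigma=\ker(du|_{T\Sigma})$, and the contact condition $\beta\wedge du>0$ along $\{u=0\}$ already forces these lines to be distinct.
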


Let $\Sigma$ be a convex surface with a dividing set $\Gamma_{\Sigma}$. We call a properly embedded graph $G$ in $\Sigma$ {\it non-isolating} if $G$ intersect $\Gamma_{\Sigma}$ transversely and every component of $\Sigma\setminus G$ intersects $\Gamma_{\Sigma}$. 

\begin{theorem}[Legendrian realization principle, Honda \cite{Honda:classification1}]\label{thm:LRP} 
	If $G$ is a properly embedded non-isolating graph on a convex surface $\Sigma$, then there is an isotopic copy of $\Sigma$ relative to its boundary such that $G$ is Legendrian.
\end{theorem}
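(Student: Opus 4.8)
The plan is to reduce the statement to \textbf{Giroux's flexibility theorem} by constructing a single singular foliation on $\Sigma$ in which $G$ already appears as a union of leaves and singularities. Recall the full form of flexibility: if $\mathcal{F}$ is any singular foliation on $\Sigma$ that is \emph{divided} by $\Gamma_\Sigma$, then there is a surface $\Sigma'$ isotopic to $\Sigma$ rel boundary (and $C^0$-close to it), with the same dividing set, whose characteristic foliation is exactly $\mathcal{F}$. Since the nonsingular leaves of a characteristic foliation are Legendrian arcs and the contact planes at a singularity contain all tangent directions of leaves limiting to it, it suffices to produce one foliation $\mathcal{F}$, divided by $\Gamma_\Sigma$, whose leaves include every edge of $G$ and whose singular set includes every vertex of $G$. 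Transporting $\mathcal{F}$ back through the flexibility isotopy then makes $G$ a Legendrian graph, proving the theorem.

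To build $\mathcal{F}$, I would use Giroux's criterion for being divided by $\Gamma_\Sigma$: there must exist a vector field $X$ directing $\mathcal{F}$ and a volume form $\omega$ on $\Sigma$ so that $\mathrm{div}_\omega X > 0$ on $R_+$, $\mathrm{div}_\omega X < 0$ on $R_-$, and $X$ exits $R_+$ transversally across $\Gamma_\Sigma$. The hypothesis already gives $G$ transverse to $\Gamma_\Sigma$. First I would prescribe $X$ along $G$ to be tangent to the edges, so that $G$ is flow-invariant and each vertex is forced to be a singular point. The remaining task is to extend $(X,\omega)$ over each component $R$ of $\Sigma\setminus G$ so that the divergence has the correct sign on $R\cap R_\pm$ and the flow crosses the arcs of $\Gamma_\Sigma$ contained in $R$ in the outward direction.

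The one place where the construction can fail — and hence the \textbf{main obstacle} — is exactly this extension over a component $R$, and this is precisely where the non-isolating hypothesis enters. If some component $R$ missed $\Gamma_\Sigma$ entirely, then $R$ would lie wholly in $R_+$ (say), its boundary $\partial R \subset G \cup \partial\Sigma$ would consist of leaves tangent to $X$, and the flux of $X$ across $\partial R$ would vanish; but $\mathrm{div}_\omega X > 0$ throughout $R$ would force $\int_R \mathrm{div}_\omega X\,\omega > 0$, contradicting the divergence theorem. The non-isolating condition rules this out: every component $R$ meets $\Gamma_\Sigma$, so the positive divergence on $R\cap R_+$ can be balanced by outward flux across the interior $\Gamma_\Sigma$-arcs, and symmetrically on $R\cap R_-$. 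Concretely, inside each $R$ I would place sources in $R\cap R_+$ and sinks in $R\cap R_-$, route the flow transversally across the $\Gamma_\Sigma$-arcs from the $+$ side to the $-$ side, keep $X$ tangent to $\partial R$, and glue these local models by a partition of unity to obtain a global $(X,\omega)$ satisfying Giroux's criterion.

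Finally, I would check the bookkeeping at $\partial\Sigma$, where the Legendrian boundary with negative twisting is itself a union of leaves and singularities of $\mathcal{F}$ and the isotopy must be taken rel boundary, and at the vertices of $G$, where several edges meet and are realized as distinct separatrices of a common singularity. With $\mathcal{F}$ so constructed, Giroux flexibility supplies the required isotopic copy of $\Sigma$ on which $G$ is Legendrian, completing the argument.
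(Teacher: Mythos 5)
The paper does not prove this statement; it is quoted as background and attributed to Honda \cite{Honda:classification1}, and your sketch is a faithful outline of exactly that standard argument (Kanda--Honda): build a singular foliation divided by $\Gamma_\Sigma$ containing $G$ in its leaves and singularities, invoke Giroux flexibility, and use the non-isolating hypothesis to defeat the divergence-theorem obstruction on components of $\Sigma\setminus G$. So your proposal is correct and takes essentially the same approach as the source the paper cites.
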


In particular, if $\Sigma\setminus G$ is connected, $G$ can always be realized as a Legendrian graph. This implies any non-separating curve on $\Sigma$ can be realized as a Legendrian curve.

Honda \cite{Honda:classification1} showed that if two convex surfaces intersect along a Legendrian curve $L$, the dividing curves interleave and we can obtain a new convex surface by \dfn{rounding the edge}, see Figure~\ref{fig:edgerounding}.

\begin{figure}[htbp]
	\begin{center}
		\vspace{0.5cm}
		\begin{overpic}[scale=0.9,tics=20]{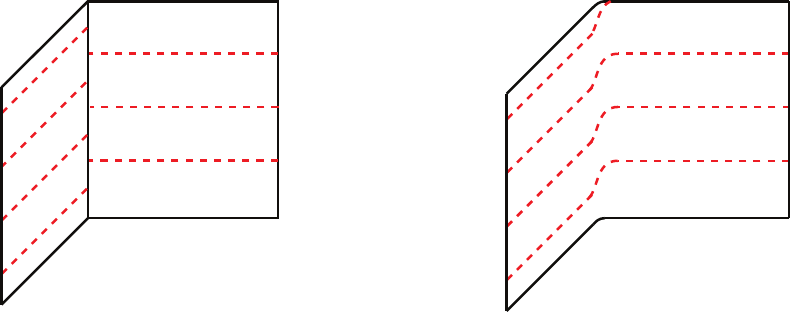}
		\end{overpic}
		\caption{Rounding an edge between two convex surfaces.}
		\label{fig:edgerounding}
	\end{center}
\end{figure}

We can modify a dividing set in a systematical way. Let $\Sigma$ be an oriented convex surface with a dividing set $\Gamma_{\Sigma}$ and $D$ be a disk with Legendrian boundary with $\tb(\partial D)=-1$. Suppose $\alpha=D\cap\Sigma$ intersects $\Gamma_{\Sigma}$ in three points $\{p,q,r\}$ and $\partial\alpha=\{p,r\}$ are elliptic singularity of $D$. By Giroux's flexibility theorem, we can perturb $D$ for $q$ to be a unique hyperbolic singularity. We call $D$ a {\it bypass} and the sign of the hyperbolic singularity is called the {\it sign} of a bypass.

Honda proved in \cite{Honda:classification1} that there is a one-sided neighborhood $\Sigma\times[0,1]$ of $\Sigma\cup D$ such that $\Sigma\times\{0,1\}$ is convex and the dividing set on $\Sigma\times\{1\}$ is modified as shown in Figure~\ref{fig:bypass}. We say $\Sigma\times\{1\}$ is obtained from $\Sigma\times\{0\}$ by a bypass attachment along $\alpha$. Note that a bypass can be on either side of a surface and it gives a different effect on the dividing set.

\begin{figure}[htbp]
	\begin{center}
		\vspace{0.5cm}
		\begin{overpic}[scale=0.7,tics=20]{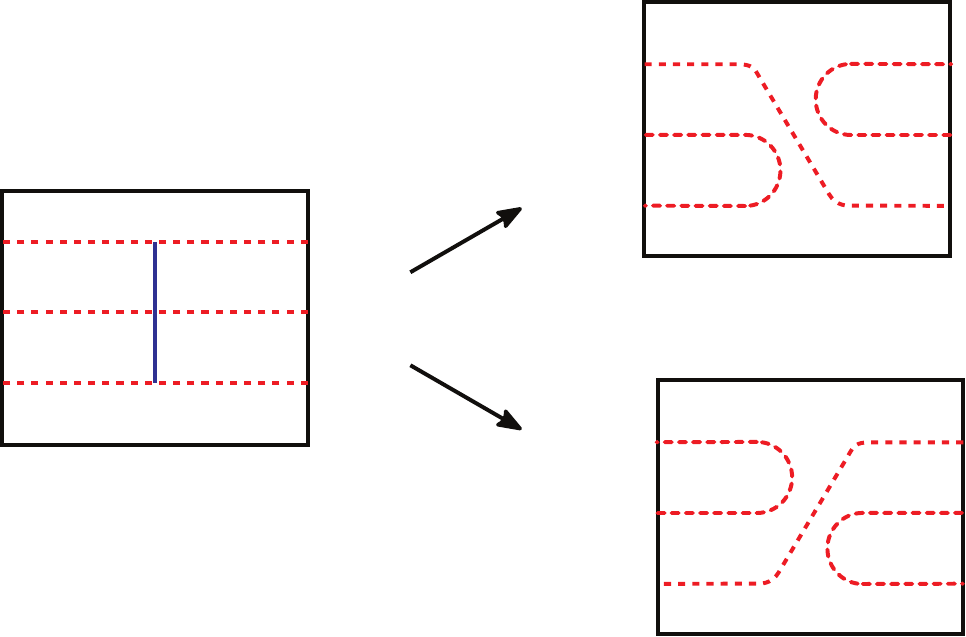}
		\put(150,150){front}
		\put(150,60){back}
		\end{overpic}
		\caption{Bypass attachments}
		\label{fig:bypass}
	\end{center}
\end{figure}

Let us investigate the effect of bypass attachment to a torus in detail. First, we introduce the Farey graph. We define the graph inductively. Let $\mathbb{D}$ be the Poincar\'e disk. Start with the top-most vertex labeled $0=\frac{0}{1}$ and the bottom-most vertex labeled $\infty=\frac{1}{0}$ and connect them by a geodesic. Given two vertices in the right half plane which are already labeled $\frac{a}{b}$ and $\frac{c}{d}$, choose a vertex on $\bd\mathbb{D}$ in the middle of the vertices and label it as $\frac{a+c}{b+d}$. Then connect it with the other two vertices by geodesics. For the left half plane, we treat $\infty$ as $-\infty=\frac{-1}{0}$. See Figure~\ref{fig:Farey}.

\begin{figure}[htbp]
	\begin{center}
		\vspace{1cm}
		\begin{overpic}[scale=0.5,tics=20]{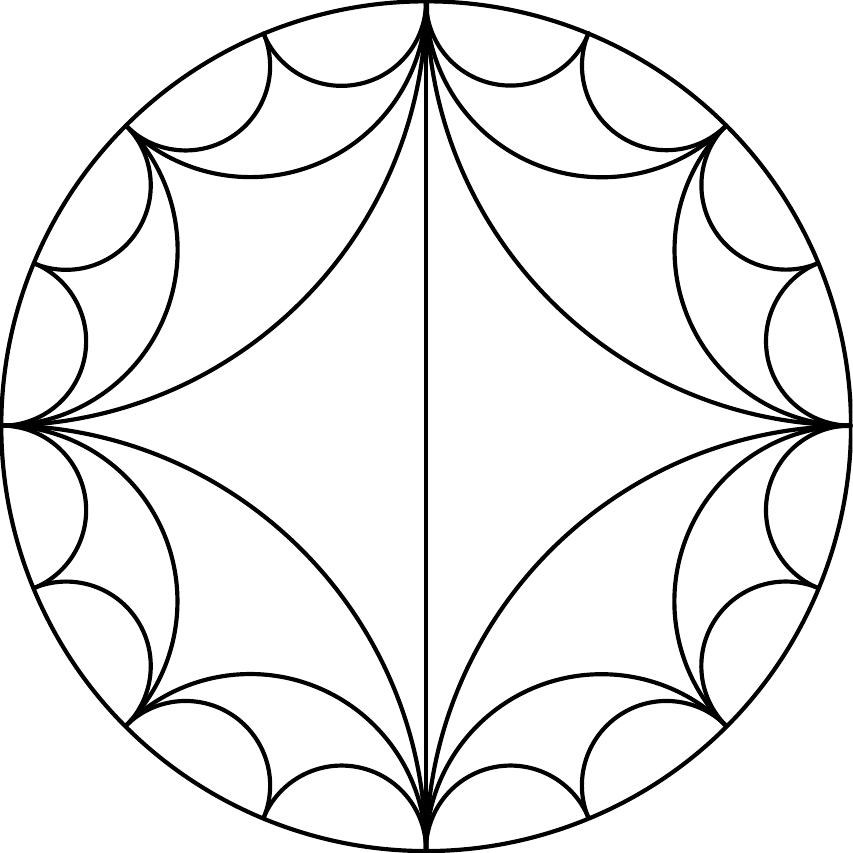}
		\put(100,213){$0$}		
		\put(99,-12){$\infty$}		
		\put(210,100){$\frac{1}{1}$}		
		\put(-20,100){$-\frac{1}{1}$}		
		\put(180,180){$\frac{1}{2}$}
		\put(180,20){$\frac{2}{1}$}
		\put(15,20){$-\frac{2}{1}$}
		\put(15,180){$-\frac{1}{2}$}
		\put(140,205){$\frac{1}{3}$}
		\put(200,150){$\frac{2}{3}$}
		\put(200,60){$\frac{3}{2}$}
		\put(140,-5){$\frac{3}{1}$}
		\put(50,-5){$-\frac{3}{1}$}
		\put(-10,60){$-\frac{3}{2}$}
		\put(-10,145){$-\frac{2}{3}$}
		\put(50,205){$-\frac{1}{3}$}	
		\end{overpic}
		\vspace{0.5cm}
		\caption{The Farey graph}
		\label{fig:Farey}
	\end{center}
\end{figure}

Now consider a convex torus $T^2$ with two parallel dividing curves and a linear characteristic foliation. We call a leaf of this characteristic foliation a \emph{ruling curve}. Choose a homology basis for $T^2$ as $(\vects{1}{0},\vects{0}{1})$. Denote the slope of curves parallel to $\vects{p}{q}$ by $\frac{p}{q}$.

\begin{theorem}[Honda \cite{Honda:classification1}]\label{thm:bypass}
	Let $s,r$ be the dividing slope and the ruling slope of $T^2$ respectively. After a bypass attachment along a ruling curve, we obtain a new convex torus with two dividing curves with slope $s'$, where $s'$ is the vertex on the Farey graph anticlockwise of r and clockwise of s. In addition, $s'$ is closest to $r$ with an edge to $s$.	 
\end{theorem}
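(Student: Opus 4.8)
The plan is to normalize by a change of coordinates, attach the bypass in Giroux normal form, and then read off the new dividing slope by edge-rounding, finally matching the answer to the Farey description in Figure~\ref{fig:Farey}. Since an element of $\mathrm{SL}_2(\Z)$ acts on $T^2$ by an orientation-preserving diffeomorphism that carries convex tori to convex tori, transforms slopes by the induced M\"obius action, and acts on the Farey graph by a simplicial automorphism, both the hypothesis and the claimed conclusion are invariant under such a change of basis. I would therefore normalize so that the dividing slope is $s = \infty = \frac{1}{0}$, and after a further shear fixing $\infty$ arrange the ruling slope $r$ to lie in a fixed fundamental interval, with the side (equivalently, the sign) of the bypass selecting the relevant arc of the circle. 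In these coordinates the two dividing curves are vertical and the ruling curves form the linear foliation of slope $r$.

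First I would put $T^2$ in Giroux normal form and use the Legendrian realization principle (Theorem~\ref{thm:LRP}), together with the remark that any non-separating curve is Legendrian-realizable, to make the chosen ruling curve Legendrian. The given bypass is attached along a sub-arc $\alpha$ of this curve meeting $\Gamma_{T^2}$ in exactly three points, as in the definition preceding Figure~\ref{fig:bypass}; by Theorem~\ref{thm:twisting} the number of these intersection points is controlled by the Farey distance between $r$ and $s$. Attaching the bypass and thickening produces $T^2 \times [0,1]$ with $T^2 \times \{0\}$ the original torus and $T^2 \times \{1\}$ convex, where the dividing set on $T^2 \times \{1\}$ is obtained from $\Gamma_{T^2}$ by the local modification of Figure~\ref{fig:bypass} along $\alpha$.

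Next I would compute the new dividing slope $s'$ directly from this local picture. Cutting along $\alpha$ and rounding the resulting corners (Figure~\ref{fig:edgerounding}) reconnects the two vertical dividing curves across the bypass arc; tracking the homology class of the reconnected multicurve, I expect to recover again exactly two parallel dividing curves, whose common slope $s'$ can be read off from the rerouting. Because the bypass attaching arc meets $\Gamma_{T^2}$ in the minimal number of points, the reconnection changes the slope in the smallest possible step, which forces $s'$ to be joined to $s$ by an edge of the Farey graph; the side on which the bypass is attached determines which of the two arcs of the circle (clockwise or anticlockwise from $s$) contains $s'$, matching the orientation convention in the statement. This last point can also be cross-checked against the basic-slice structure of minimally twisting tight contact structures on $T^2 \times I$, although I would prefer to keep the local rounding computation as the primary argument to avoid any circularity.

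Finally I would identify $s'$ combinatorially. Among the Farey neighbors of $s$, intersection-minimality selects the neighbor whose dividing curves meet the slope-$r$ ruling curves as efficiently as possible, which is precisely the neighbor of $s$ lying in the open arc from $r$ to $s$ that is closest to $r$, as claimed. I expect the \emph{main obstacle} to be the bookkeeping in the rounding step: verifying that the rounded dividing set has exactly two components of the stated slope and not more, and rigorously pinning the selected neighbor to the Farey description in the general case where $r$ and $s$ are far apart in the graph. I would handle this either by reducing to the case that $r$ is a Farey neighbor of $s$ (after normalization, a general $r$ only changes which ruling curves are realized, not the local model near $\alpha$), or by an inductive argument following the Farey path from $r$ toward $s$.
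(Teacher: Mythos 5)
This statement is quoted background from Honda \cite{Honda:classification1}; the paper offers no proof of it, so there is no internal argument to compare against. Your sketch is essentially Honda's original proof: normalize by $\mathrm{SL}_2(\Z)$ so the two dividing curves are vertical, realize the ruling curve as Legendrian, apply the local modification of Figure~\ref{fig:bypass} along an attaching arc meeting $\Gamma_{T^2}$ in three consecutive points, and trace the rerouted multicurve to read off $s'$ (note that the mechanism here is the bypass modification itself, not edge-rounding as in Figure~\ref{fig:edgerounding}, which plays no role). One substantive caution: your parenthetical claim that a general $r$ ``only changes which ruling curves are realized, not the local model near $\alpha$'' is misleading as stated. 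The outcome of the attachment is governed by the isotopy class of the attaching arc relative to $\Gamma_{T^2}$, and that class --- hence $s'$ --- genuinely depends on $r$; if the relevant data were independent of $r$, the new slope could not be the Farey neighbor of $s$ nearest to $r$. The correct reduction is the other one you mention: after normalizing $s=\infty$, a further shear fixing $\infty$ places $r$ in a fixed fundamental interval, so a single explicit computation of the rerouted dividing set in that normal form, combined with $\mathrm{SL}_2(\Z)$-equivariance of the Farey description and the front/back and orientation conventions of Remark~\ref{rmk:convention}, gives the general statement.
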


\begin{remark}\label{rmk:convention}
	In Theorem~\ref{thm:bypass}, we use the slope convention $\frac pq = \vects{p}{q}$ since $\frac{\text{meridian}}{\text{longitude}}$ is our slope convention for a solid torus. If we use the slope convention $\frac qp = \vects{p}{q}$, then we should reverse the words ``clockwise'' and ``anticlockwise''. Also, Theorem~\ref{thm:bypass} assumes that we attach a bypass from the front. If we attach a bypass from the back, then we should reverse the words ``clockwise'' and ``anticlockwise''.
\end{remark}

We need to guarantee the existence of bypasses to utilize them. There are several ways to find bypasses.

\begin{theorem}[Imbalance principle, Honda \cite{Honda:classification1}]\label{thm:Imbalance} Let $\Sigma$ and $A=S^1\times[0,1]$ be convex surfaces with Legendrian boundary. Suppose $\Sigma\cap A=S^1\times\{0\}$ and $\tw(S^1\times\{0\})<\tw(S^1\times\{1\})\leq0$. Then there is a bypass for $\Sigma$ along $S^1\times\{0\}$.
\end{theorem}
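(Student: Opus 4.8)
The plan is to run the whole argument on the convex annulus $A$, translating the hypothesis on twisting numbers into combinatorial information about the dividing set $\Gamma_A$ via Honda's twisting formula (Theorem~\ref{thm:twisting}), and then to produce the bypass as the half-disk cut off by a boundary-parallel dividing arc. Since $A$ is a convex annulus with Legendrian boundary, $\Gamma_A$ is a disjoint union of properly embedded arcs and closed curves, and Theorem~\ref{thm:twisting} gives $|\Gamma_A \cap (S^1\times\{i\})| = -2\tw(S^1\times\{i\})$ for $i = 0, 1$, both positive since the twisting numbers are negative.

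First I would run an endpoint count on the arcs of $\Gamma_A$. Let $a$, $b$, and $c$ denote the numbers of dividing arcs with both endpoints on $S^1\times\{0\}$, with both endpoints on $S^1\times\{1\}$, and with one endpoint on each, respectively; closed components of $\Gamma_A$ contribute no endpoints. Counting endpoints on each boundary circle gives $2a+c = -2\tw(S^1\times\{0\})$ and $2b+c = -2\tw(S^1\times\{1\})$, so $a-b = \tw(S^1\times\{1\}) - \tw(S^1\times\{0\}) > 0$ by hypothesis. Hence $a \geq 1$, and there is at least one dividing arc $\delta$ of $A$ with both endpoints on $S^1\times\{0\}$. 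Choosing $\delta$ so that the half-disk $D \subseteq A$ it cuts off meets $\Gamma_A$ only in $\delta$, we have $\partial D = \delta \cup \gamma'$, where $\gamma' \subseteq S^1\times\{0\} = A \cap \Sigma$ is a subarc lying on $\Sigma$.

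It remains to verify that $D$ is a bypass for $\Sigma$ along $S^1\times\{0\}$, and this is where I expect the real work to lie. After applying the Legendrian realization principle (Theorem~\ref{thm:LRP}) to make $\partial D$ Legendrian, one must check the two conditions in the definition of a bypass: that $\tb(\partial D) = -1$, and that the attaching arc $\alpha = D \cap \Sigma$ meets $\Gamma_\Sigma$ in exactly three points with its endpoints among them. Both follow from a local analysis of how $\Gamma_A$ and $\Gamma_\Sigma$ interact along the common Legendrian curve $S^1\times\{0\}$ — the same interleaving phenomenon that underlies edge-rounding (Figure~\ref{fig:edgerounding}). Concretely, taking $\alpha$ to be the subarc of $S^1\times\{0\}$ running between the two points of $\Gamma_\Sigma$ nearest the feet of $\delta$, the interleaving forces precisely one further point of $\Gamma_\Sigma$ — the one trapped between the feet of $\delta$ — in the interior of $\alpha$, yielding the three required intersection points, while a matching count of the intersections of $\partial D$ with $\Gamma_A \cup \Gamma_\Sigma$ after rounding gives $\tb(\partial D) = -1$. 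The main obstacle is thus not the counting argument, which is elementary, but the careful bookkeeping of these two dividing sets near their shared Legendrian curve, together with the choice of $\delta$ ensuring the half-disk it bounds is disjoint from all other components of $\Gamma_A$.
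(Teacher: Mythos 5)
Your argument is correct and is essentially Honda's original proof of the imbalance principle, which the paper cites without reproving: the endpoint count on $\Gamma_A$ forces a boundary-parallel dividing arc $\delta$ with both feet on $S^1\times\{0\}$, and the half-disk it cuts off, interacting with $\Gamma_\Sigma$ via interleaving, is the bypass. The only technical adjustment is that $\delta$ itself, being a dividing arc, cannot be made Legendrian; one instead applies Theorem~\ref{thm:LRP} to a parallel push-off of $\delta$ just outside the half-disk, so that the slightly enlarged disk $D$ has dividing set the single arc $\delta$ and hence $\tb(\partial D)=-\tfrac12\cdot 2=-1$, exactly as your count predicts.
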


\begin{theorem}[Disk imbalance, Honda \cite{Honda:classification1}]\label{thm:DiskImbalance}
	Let $\Sigma$ be a convex surface and $D$ be a convex disk with a Legendrian boundary. Suppose $\Sigma\cap D=\bd D$. If $\tb(\bd D)<-1$ then there is a bypass for $\Sigma$ along $\bd D$.
\end{theorem}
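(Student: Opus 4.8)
The plan is to reduce the statement to a combinatorial fact about the dividing set $\Gamma_D$ and then read off the bypass from an outermost arc. Since $D$ is a convex disk with Legendrian boundary, the relation $\tb(\bd\Sigma)=-\frac{1}{2}|\Gamma_\Sigma\cap\bd\Sigma|$ recalled above, applied to $D$, gives $\tb(\bd D)=-\frac{1}{2}|\Gamma_D\cap\bd D|$. Hence the hypothesis $\tb(\bd D)<-1$ forces $|\Gamma_D\cap\bd D|\geq 4$, so $\Gamma_D$ contains at least two arcs with endpoints on $\bd D$.

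The next step is to locate a boundary-parallel dividing arc. After discarding the closed components of $\Gamma_D$ (in the situations where this lemma is applied the ambient structure is tight, so by Giroux's criterion there are no homotopically trivial dividing curves and $\Gamma_D$ is a union of arcs), among the $\geq 2$ dividing arcs there is an outermost one $\gamma$: its endpoints $a,b\in\bd D$ are adjacent in $\Gamma_D\cap\bd D$, and it cuts off a half-disk $D_0\subset D$ whose interior is disjoint from $\Gamma_D$. This $\gamma$ is the single dividing arc that will carry the hyperbolic singularity of the bypass.

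Then I would pin down the attaching arc using interleaving. Because $\Sigma$ and $D$ are convex surfaces meeting along the Legendrian curve $\bd D$, their dividing sets interleave along $\bd D$, so the points of $\Gamma_\Sigma\cap\bd D$ and $\Gamma_D\cap\bd D$ alternate; in particular there is exactly one point of $\Gamma_\Sigma$ between consecutive points of $\Gamma_D$. Let $\alpha\subset\bd D$ be the sub-arc running from the point of $\Gamma_\Sigma$ immediately outside $a$ to the point of $\Gamma_\Sigma$ immediately outside $b$, so that $\alpha$ contains $a$, the unique point of $\Gamma_\Sigma$ lying between $a$ and $b$, and $b$. By construction $\alpha$ meets $\Gamma_\Sigma$ in exactly three points, with its two endpoints on $\Gamma_\Sigma$. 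Finally I would cut out the convex sub-disk $B\subset D$ bounded by $\alpha$ and a Legendrian arc pushed just off $\gamma$ into the interior of $D$; its dividing set is the single arc $\gamma$, so $\tb(\bd B)=-1$, while $B\cap\Sigma=\alpha$. After a perturbation via Giroux flexibility putting the two endpoints of $\alpha$ into elliptic position on $\Gamma_\Sigma$, this $B$ is exactly a bypass for $\Sigma$ along $\alpha\subset\bd D$, as required.

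The main obstacle is the middle step: guaranteeing a genuine boundary-parallel arc, and hence a sub-disk carrying a single hyperbolic singularity. Excluding closed components of $\Gamma_D$ (equivalently, noting that a homotopically trivial dividing curve on a disk would violate Giroux's tightness criterion) and then verifying, through the interleaving, that the resulting sub-disk attaches to $\Sigma$ along an arc meeting $\Gamma_\Sigma$ in precisely three points are the points that require care; the final perturbation of $B$ into standard bypass form is then routine.
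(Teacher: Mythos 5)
Your argument is correct and is essentially the standard proof (the one in Honda's paper, which this paper cites without reproducing): use $\tb(\bd D)\le -2$ to get at least two dividing arcs on $D$, take an outermost arc $\gamma$, and use the interleaving of $\Gamma_D$ and $\Gamma_\Sigma$ along the Legendrian curve $\bd D$ to see that the half-disk of $D$ containing $\gamma$ attaches to $\Sigma$ along an arc meeting $\Gamma_\Sigma$ in exactly three points. Your handling of the two delicate points --- ruling out closed components of $\Gamma_D$ via Giroux's criterion, and realizing the second boundary arc of the bypass half-disk as a Legendrian push-off of $\gamma$ --- is the right way to make the argument precise.
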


\begin{theorem}[Bypass rotation, Honda--Kazez--Mati\'c \cite{HKM:pinwheels}] \label{thm:bypass-rotation} Suppose that there is a bypass for $\Sigma$ from the front along an attaching arc $\gamma$. If $\gamma'$ is a Legendrian arc as in Figure~\ref{fig:bypass-rotation}, then there exists a bypass for $\Sigma$ from the front along $\gamma'$.
\end{theorem}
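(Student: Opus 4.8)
The plan is to reduce the statement to a local model and then to \emph{produce} the bypass along $\gamma'$ by combining the given bypass along $\gamma$ with the Legendrian realization principle and an imbalance argument. Since the existence of a bypass depends only on a neighborhood of its attaching arc, and since $\gamma$ and $\gamma'$ coincide outside a disk $R \subset \Sigma$ on which $\Gamma_\Sigma$ is a standard family of arcs, it suffices to work inside a neighborhood of $R$ together with the given bypass half-disk $D$ attached along $\gamma$. First I would fix the one-sided invariant neighborhood $N = \Sigma \times [0,1]$ of $\Sigma \cup D$ furnished by Honda's bypass attachment, so that $\Sigma \times \{0\} = \Sigma$ and $\Sigma \times \{1\}$ is convex with dividing set modified along $\gamma$ exactly as in Figure~\ref{fig:bypass}. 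Inside $N$ one has full control of the characteristic foliation and of both dividing sets.

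Next I would realize the rotated arc and detect the new bypass. Because $\gamma'$ meets $\Gamma_\Sigma$ in the same three-point pattern as $\gamma$, it is non-isolating, so by the Legendrian realization principle (Theorem~\ref{thm:LRP}) I may assume $\gamma'$ is Legendrian on $\Sigma = \Sigma \times \{0\}$. I would then build a convex disk $D'$ inside $N$ cobounded by $\gamma'$ and a Legendrian arc lying on the modified surface $\Sigma \times \{1\}$, running across the vertical strip $\gamma' \times [0,1]$. The effect of the bypass along $\gamma$ is that, for the rotation drawn in Figure~\ref{fig:bypass-rotation}, the dividing set on $\Sigma \times \{1\}$ meets the rotated arc in fewer points than $\Gamma_\Sigma$ meets it; by Theorem~\ref{thm:twisting} this records itself as a twisting imbalance across the strip, which I would arrange to force $\tb(\partial D') < -1$. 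Disk imbalance (Theorem~\ref{thm:DiskImbalance})---or equivalently the imbalance principle (Theorem~\ref{thm:Imbalance}) applied after closing the strip into an annulus---then yields a bypass for $\Sigma$ along $\gamma'$, and the sign of the imbalance forces it to be attached from the front.

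The step I expect to be the main obstacle is the \emph{direction dependence}. Everything hinges on showing that the rotation as drawn produces the strict inequality $\tw(\gamma' \times \{0\}) < \tw(\gamma' \times \{1\}) \le 0$ on the correct side, while the opposite rotation does not; this is a careful bookkeeping of how the dividing arcs in $R$ are reconnected by the bypass move and of which intersection points with $\gamma'$ survive. Equally delicate is maintaining control of the characteristic foliation on the strip $\gamma' \times [0,1]$ and on the auxiliary disk $D'$ throughout, so that the half-disk extracted by the imbalance principle is a genuine bypass with Legendrian boundary of twisting $-1$ and no overtwisted disk is introduced. Once the intersection count and the non-isolating condition are verified in the local model near $R$, the global statement follows immediately, since all modifications are supported in that neighborhood.
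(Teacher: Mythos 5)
You should first note that the paper does not prove this statement at all: it is quoted as a background result from Honda--Kazez--Mati\'c \cite{HKM:pinwheels}, so there is no internal argument to compare against. Judged on its own, your outline follows a reasonable and standard-looking strategy --- attach the given bypass along $\gamma$, obtain the layer $\Sigma\times[0,1]$ with modified top surface, and hunt for a bypass along $\gamma'$ inside that layer via an imbalance argument on the vertical strip $\gamma'\times[0,1]$ --- but as written it contains genuine gaps.

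The first and most serious gap is that the entire content of the lemma is the direction-dependent intersection count that you explicitly defer: one must actually verify that for the rotation as drawn in Figure~\ref{fig:bypass-rotation} (and only for that direction of rotation) the arc $\gamma'$, pushed to $\Sigma\times\{1\}$, meets the modified dividing set in strictly fewer points than $\gamma'$ meets $\Gamma_\Sigma$. You name this as ``the main obstacle'' and describe it as bookkeeping, but without carrying it out the argument establishes nothing, and for the opposite rotation the count does not drop, so the step cannot be taken for granted. Second, even granting the count ($3$ points on $\gamma'\times\{0\}$ versus $1$ on $\gamma'\times\{1\}$), the disk-imbalance principle only guarantees a boundary-parallel dividing arc of $\Gamma_{D'}$ with both endpoints on $\gamma'\times\{0\}$. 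The dividing set of the strip consists of two arcs, and of the two possible configurations one yields a bypass whose attaching arc overshoots an endpoint of $\gamma'$ rather than being $\gamma'$ itself; ruling out the bad configuration, or converting its bypass into one along $\gamma'$ by a bypass slide (Theorem~\ref{thm:bypass-sliding}), requires an additional argument that is absent. Finally, the endpoints of an attaching arc lie on $\Gamma_\Sigma$, so the corners of the strip $\gamma'\times[0,1]$ sit on the dividing set and $\gamma'$ must be extended slightly before the strip can be made convex with Legendrian boundary; this is routine but should be said. For what it is worth, the published proof in \cite{HKM:pinwheels} proceeds by a different mechanism (exploiting the existence of trivial bypasses), so if you want a self-contained argument along your lines you must close these gaps explicitly.
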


\begin{theorem}[Bypass slide, Honda \cite{Honda:classification1}] \label{thm:bypass-sliding}
	Suppose that there is a bypass for $\Sigma$ from the front along an attaching arc $\gamma$. If $\gamma'$ is a Legendrian arc that is isotopic to $\gamma$ relative to $\Gamma_\Sigma$, then there is a bypass for $\Sigma$ from the front along $\gamma'$.
\end{theorem}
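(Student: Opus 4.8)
The plan is to transport the given bypass across the isotopy from $\gamma$ to $\gamma'$ by upgrading that isotopy to an ambient contact isotopy and dragging the bypass half-disk along. Recall that a bypass along $\gamma$ is a convex half-disk $D$ with Legendrian boundary, $\tb(\partial D) = -1$, meeting $\Sigma$ exactly in $\gamma = D \cap \Sigma$, which crosses $\Gamma_\Sigma$ in three points, the two endpoints being elliptic singularities and the interior crossing being the single hyperbolic singularity. Since any contactomorphism sends convex half-disks to convex half-disks and preserves the singularity types of the characteristic foliation, it suffices to produce a contactomorphism of a neighborhood of $\Sigma$, isotopic to the identity through contactomorphisms preserving $\Sigma$ setwise, whose restriction to $\Sigma$ carries $\gamma$ to $\gamma'$; the image of $D$ is then the desired bypass along $\gamma'$, and it lies on the same (front) side because the isotopy stays in a collar of $\Sigma$.

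First I would record an isotopy $h_t \colon \Sigma \to \Sigma$, $t \in [0,1]$, with $h_0 = \mathrm{id}$ and $h_1(\gamma) = \gamma'$, preserving $\Gamma_\Sigma$ setwise throughout; such an $h_t$ exists precisely because $\gamma'$ is isotopic to $\gamma$ relative to $\Gamma_\Sigma$, so the one-parameter family of attaching arcs extends to an ambient isotopy of $\Sigma$ supported near the swept-out region, and that region meets $\Gamma_\Sigma$ only in arcs that are slid along, never created or destroyed. Next I would extend $h_t$ to an ambient isotopy $H_t$ of a one-sided collar $\Sigma \times [0,1]$ and promote it to a contact isotopy: because $\Sigma$ is convex and $h_t$ preserves $\Gamma_\Sigma$, the germ of the contact structure near $\Sigma$ is, up to isotopy, determined by the dividing set, so Giroux's flexibility theorem lets me choose characteristic foliations on the surfaces $H_t(\Sigma)$ compatibly and integrate the resulting family of contact vector fields to a contact isotopy $\Psi_t$ with $\Psi_t|_\Sigma = h_t$. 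Applying $\Psi_1$ to $D$ then yields a convex half-disk $D' = \Psi_1(D)$ with $D' \cap \Sigma = \gamma'$, Legendrian boundary of twisting $-1$, and the same elliptic-elliptic-hyperbolic singularity pattern, i.e.\ a bypass for $\Sigma$ along $\gamma'$ from the front.

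The main obstacle is the promotion step: showing that the topological isotopy $h_t$, which only remembers the dividing set, lifts to a genuine contact isotopy of the collar that simultaneously keeps each $h_t(\gamma)$ Legendrian and keeps the transported half-disk convex with its hyperbolic point intact. The hypothesis ``isotopic relative to $\Gamma_\Sigma$'' is exactly what rules out any bypass-relevant change in the dividing configuration along the way: the endpoints of the arc only slide along fixed dividing curves and the interior crossing persists, so the neighborhood theorem for convex surfaces applies uniformly in $t$ and no overtwisted disk or spurious dividing curve can appear. Care is still needed where the sliding endpoints pass switches or corners of $\Gamma_\Sigma$, and where one must match the transported arc $\Psi_1(\gamma)$ with the prescribed Legendrian arc $\gamma'$; there I would invoke the Legendrian realization principle (Theorem~\ref{thm:LRP}) to keep the whole family realized as Legendrian curves on a fixed convex model, after which integrating the contact Hamiltonian is routine.
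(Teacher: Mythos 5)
The paper does not prove this lemma; it is quoted as background from Honda \cite{Honda:classification1}, so there is no in-paper argument to compare against. Judged on its own terms, your proposal has a genuine gap at precisely the step you flag as ``the main obstacle,'' and the proposed fix does not close it. Any contact isotopy that preserves $\Sigma$ setwise preserves $\xi\cap T\Sigma$, hence preserves the characteristic foliation $F_\Sigma$ and its singular locus. The attaching arc $\gamma$ is a Legendrian arc in $\Sigma$, i.e.\ it is assembled from leaves and singular points of $F_\Sigma$, so the restriction to $\Sigma$ of your $\Psi_1$ can only carry $\gamma$ to an arc in the $\mathrm{Aut}(F_\Sigma)$-orbit of $\gamma$; for a generic characteristic foliation this group is essentially discrete, whereas the hypothesis only gives that $\gamma'$ is isotopic to $\gamma$ rel $\Gamma_\Sigma$ --- a far weaker condition. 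Hence no contact isotopy restricting to $h_t$ on a fixed $\Sigma$ exists in general, and ``integrating the contact Hamiltonian'' cannot be made routine. Invoking Giroux flexibility does not repair this: flexibility changes $F_\Sigma$ only by isotoping the surface to a parallel copy inside an invariant neighborhood, not by an ambient contactomorphism fixing $\Sigma$, and once the surface moves you must re-verify that the transported (truncated or flow-extended) half-disk is a bypass for the new surface along the new arc --- which is essentially the statement being proved. A secondary, fixable issue: the half-disk $D$ is not contained in any prescribed one-sided collar $\Sigma\times[0,1]$, so an isotopy supported in such a collar does not act on $D$ at all; one would need to cut off a contact Hamiltonian to get an ambient isotopy, but that only matters once the main obstruction is resolved.

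The standard argument is different in kind: one does not transport the old disk by an ambient symmetry (no such symmetry exists) but manufactures a new one. Attaching the given bypass along $\gamma$ produces a one-sided layer $\Sigma\times[0,1]$ whose top dividing set is the result of the bypass move along $\gamma$; since $\gamma'$ is isotopic to $\gamma$ rel $\Gamma_\Sigma$, the bypass move along $\gamma'$ yields an isotopic dividing set, and one locates a bypass along $\gamma'$ inside this layer directly, using the Legendrian realization principle together with an explicit model of the bypass layer or an imbalance argument on a convex disk or annulus containing the rectangle swept out between $\gamma$ and $\gamma'$. If you want to salvage your write-up, replace the ``contact isotopy of $\Sigma$'' step with a construction of this type.
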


\begin{figure}[htbp]
\begin{center}
	\begin{overpic}[scale=1,tics=20]{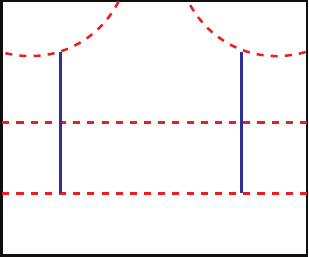}
	\put(120,78){$\gamma$}
	\put(32,78){$\gamma'$}
	\end{overpic}
	\caption{The attaching arc for a bypass rotation.}
	\label{fig:bypass-rotation}
\end{center}
\end{figure}

\subsection{Contact structures on \texorpdfstring{$T^2\times I$}{T2 X I} and solid tori}
A tight contact structure $\xi$ on $T^2\times I$ is called a {\it basic slice} if 

\begin{enumerate}
	\item $\bd (T^2 \times I) = T_0\cup T_1$ is two standard convex tori with dividing slopes $s_0$ and $s_1$, respectively,
	\item $s_0$ and $s_1$ are connected by an edge in the Farey graph, and
	\item the slope of the dividing curves on any convex torus $T$ parallel to the boundary is clockwise of $s_0$ and anticlockwise of $s_1$ in the Farey graph. This condition is called {\it minimal twisting}.
\end{enumerate}

Honda \cite{Honda:classification1} studied various properties of basic slices. Here, we review the classification of basic slices up to isotopy. 

\begin{theorem}[Honda \cite{Honda:classification1}]
	There are exactly two basic slices up to isotopy fixing the characteristic foliation on the boundary. These two tight contact structures are distinguished by their relative Euler class. They are called positive and negative basic slices and denoted by $B_+(s_0,s_1)$ and $B_-(s_0,s_1)$, respectively.
\end{theorem}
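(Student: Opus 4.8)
The plan is to run a standard convex-surface decomposition, reducing $T^2\times I$ to a solid torus by cutting along a vertical annulus, and then to read off the count from the dividing set of that annulus. First I would exploit the transitivity of the $SL(2,\Z)$-action on edges of the Farey graph to normalize the picture: after a suitable diffeomorphism of $T^2\times I$ I may assume $s_0$ and $s_1$ are a fixed standard pair of adjacent slopes, say $s_0=-1$ and $s_1=0$. I then put $T_0=T^2\times\{0\}$ and $T_1=T^2\times\{1\}$ in standard form as convex tori with exactly two dividing curves of the prescribed slopes, and Legendrian-realize (Theorem~\ref{thm:LRP}) ruling curves of a common slope $r$ chosen to be Farey-adjacent to both $s_0$ and $s_1$; in the normalized model $r=\infty$ works.

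Next I would take a convex annulus $A$ whose two boundary components are the chosen ruling curves on $T_0$ and $T_1$. Using the formula $\tb(\bd\Sigma)=-\frac12|\Gamma_\Sigma\cap\bd\Sigma|$ together with Theorem~\ref{thm:twisting}, each boundary circle of $A$ meets the dividing set of the adjacent torus in exactly two points, so $\Gamma_A$ has four endpoints on $\bd A$. The crux of the argument is to show that $\Gamma_A$ consists of precisely two arcs running straight across from $T_0$ to $T_1$, with no closed components and no boundary-parallel arcs. Closed components are excluded by tightness (an innermost such curve would bound an overtwisted disk after Legendrian realization). A boundary-parallel dividing arc, say cutting off a half-disk from $T_0$, would supply a bypass for $T_0$; attaching it and applying Theorem~\ref{thm:bypass} would move the dividing slope of $T_0$ to a Farey vertex that is \emph{not} clockwise of $s_0$ and anticlockwise of $s_1$, violating the minimal-twisting hypothesis. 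I expect this step---controlling $\Gamma_A$ by playing boundary-parallel arcs against minimal twisting---to be the main obstacle, since it is where all the hypotheses are actually used.

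Once $\Gamma_A$ is pinned down to the two straight arcs, I would cut $T^2\times I$ along $A$ and round the resulting edges (Figure~\ref{fig:edgerounding}) to obtain a solid torus whose boundary dividing set is completely determined by the data above. The tight contact structures on this solid torus with the given boundary are classified by Honda, and the combinatorial input is now rigid; the only surviving freedom is the sign of the unique hyperbolic point of $A$ (equivalently, the sign of the bypass realizing the slope change $s_0\rightsquigarrow s_1$ in Theorem~\ref{thm:bypass}). This yields at most two tight contact structures, a positive and a negative one.

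Finally I would establish existence and distinctness. Both signs are realized: starting from $T_1$ and attaching a bypass of each sign along a ruling curve produces, by Theorem~\ref{thm:bypass}, a tight $T^2\times I$ with the correct boundary slopes, giving two candidate basic slices. To see they are not isotopic rel the boundary characteristic foliation, I would compute the relative Euler class $e(\xi)\in H^2(T^2\times I,\bd)$: its Poincar\'e dual is represented by a properly embedded arc built from $R_+\cap A$ and $R_-\cap A$, and the two sign choices give the two opposite classes $\pm\PD(e)$. Since the relative Euler class is an isotopy invariant, the two contact structures are genuinely distinct, and we denote them $B_+(s_0,s_1)$ and $B_-(s_0,s_1)$. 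Combining the upper bound of the previous step with this construction gives exactly two basic slices, as claimed.
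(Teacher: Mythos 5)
The paper does not prove this statement: it is quoted verbatim from Honda's classification paper \cite{Honda:classification1} as background, so there is no in-paper proof to compare against. Your sketch is, in substance, Honda's original argument (normalize the adjacent slopes by $SL(2,\Z)$, span a convex vertical annulus $A$ between Legendrian ruling curves, pin down $\Gamma_A$, cut to a solid torus and invoke the solid-torus count, then separate the two structures by the relative Euler class), and the overall logic is sound.

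Two points in your write-up are imprecise enough to deserve repair. First, in the step excluding boundary-parallel arcs of $\Gamma_A$: since each component of $\bd A$ meets $\Gamma_{T_i}$ in only two points, the half-disk cut off by a boundary-parallel dividing arc has an attaching arc that meets $\Gamma_{T_i}$ in fewer than three points in the minimal position required by Theorem~\ref{thm:bypass}, so that theorem does not apply verbatim. You need Honda's separate analysis of such degenerate bypasses: they are either trivial (and can be discarded by the trivial-bypass lemma) or they produce a convex torus whose dividing slope equals the ruling slope; since $s_0$ and $s_1$ are Farey-adjacent, \emph{any} new slope violates minimal twisting, so the conclusion you want does hold, but not by the route you cite. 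Second, the relative Euler class lives in $H^2(T^2\times I,\bd)\cong H_1(T^2\times I)\cong\Z^2$, so its Poincar\'e dual is a \emph{closed} curve class, not a properly embedded arc; in the normalized model it is $\pm(v_1-v_0)$ for primitive vectors $v_i$ of slope $s_i$, which happens to be $\pm$ the core of $A$. In particular $\langle e(\xi),[A]\rangle=\chi(R_+)-\chi(R_-)=0$ for both structures, so evaluating on $A$ itself distinguishes nothing; you must evaluate on annuli of other slopes (or compare the classes in $H_1$ directly). With those two corrections the argument is the standard one and is complete at the level of a sketch.
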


We now review Giroux torsion. Let $k \in \frac12 \N$. Consider the contact structure $\xi^k=\ker(\sin (2\pi kz) dx + \cos( 2\pi kz) dy)$ on $T^2\times [0,1]$, where $(x,y)$ are the coordinates on $T^2$ and $z$ is the coordinate on $[0,1]$. We call $\xi^k$ a \dfn{Giroux $k$-torsion layer} and if we have a contact structure $(M,\xi)$ into which $(T^2\times[0,1],\xi^k)$ embeds, we say $(M,\xi)$ contains \dfn{Giroux $k$-torsion}. We will use the phrase $(M,\xi)$ has \dfn{exactly Giroux $k$-torsion} to the situation where one can embed  $(T^2\times[0,1],\xi^k)$ into $(M,\xi)$ but not $(T^2\times[0,1],\xi^{k+\scriptscriptstyle\frac 12})$. If $k=1/2$, then we call $\xi^k$ a \dfn{half Giroux torsion}. We say $(M,\xi)$ has \dfn{no (half) Giroux torsion} if $(T^2\times[0,1],\xi^k)$ does not embed in $(M,\xi)$ for any $k \in \frac12\N$. 

Now we consider $(T^2\times \mathbb{R}, \sin (2\pi kz) dx + \cos (2\pi kz) dy)$ and perturb $T^2\times\{0\}$ and $T^2\times \{1\}$ so that they become convex with two dividing curves of slope $0$. Let $\xi_c^k$ be the resulting contact structure on $T^2\times [0,1]$. We call this $\xi_c^k$ a \dfn{convex Giroux $k$-torsion layer}. Notice that inside of $(T^2\times[0,1], \xi_c^k)$ there is a basic slice with one boundary component agreeing with $T^2\times\{0\}$ and having boundary slope $0$ and $\infty$. This will either be a positive or a negative basic slice. By reversing the coorientation of $\xi_c^k$, if necessary, we can assume it is positive. 

\begin{theorem}\label{twolayers}
  For $k \in \frac12\N$, there are exactly two convex Giroux $k$-torsions $\pm \xi_c^k$, up to isotopy, on $T^2\times [0,1]$ with convex boundary having two dividing curves, both of slope $r$. The two contact structures are contactomorphic. 
\end{theorem}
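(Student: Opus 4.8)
The plan is to establish the three assertions---that the layer is well defined up to isotopy, that there are exactly two isotopy classes, and that they are contactomorphic---by decomposing $(T^2\times[0,1],\xi_c^k)$ into basic slices and controlling their signs. Since $SL_2(\Z)$ acts transitively on slopes by orientation-preserving diffeomorphisms of $T^2$, which preserve all the relevant structure, I may assume throughout that the boundary dividing slope is $r=0$, matching the normalization used to define $\xi_c^k$.

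First I would produce the basic-slice decomposition. In the model $\alpha=\sin(2\pi kz)\,dx+\cos(2\pi kz)\,dy$ the characteristic foliation on $T^2\times\{z\}$ is linear, and its slope rotates monotonically as $z$ runs from $0$ to $1$, the direction sweeping out total angle $2\pi k$. On each subinterval where the direction turns by $\frac{\pi}{2}$, the two ends carry convex tori whose dividing slopes are Farey neighbors (alternately $0$ and $\infty$), so after perturbing these intermediate tori to be convex I obtain a concatenation of $4k$ basic slices with dividing slopes $0,\infty,0,\dots,\infty,0$. By Giroux flexibility the germ of $\xi_c^k$ along the convex boundary is determined by its dividing set, and the interior is the fixed model, so $\xi_c^k$ is well defined up to isotopy once the coorientation is fixed.

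Next I would pin down the signs. Each slice is a $B_+$ or $B_-$ in the sense of the classification of basic slices. If two consecutive slices had opposite signs, then, since the triple of slopes they span returns to its start ($0\to\infty\to0$ or $\infty\to0\to\infty$), the pair would be a bypass attachment followed by its inverse; cancelling it would reduce the twisting, contradicting the layer being a genuine Giroux $k$-torsion. Hence all $4k$ signs coincide, which leaves at most the all-positive and all-negative layers, namely $\xi_c^k$ and $-\xi_c^k$. (The same rigidity follows from the symmetry $\phi(x,y,z)=(y,-x,z+\tfrac{1}{4k})$ of the infinite model, which satisfies $\phi^{*}\alpha=\alpha$ and carries each slab to the next, forcing equal signs.) To see that the two classes are genuinely distinct I would invoke Honda's classification of tight contact structures on $T^2\times I$ \cite{Honda:classification1}: along the fixed Farey path $0,\infty,0,\dots,0$ the isotopy classes correspond to sign sequences modulo shuffling, and a shuffling move requires three consecutive slopes spanning a nondegenerate Farey triangle. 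Here every consecutive triple has repeated endpoints, so no shuffling is available and the sign sequence is a complete invariant, whence the all-positive sequence is not isotopic to the all-negative one. Finally, for the contactomorphism I would use the elliptic involution $(x,y,z)\mapsto(-x,-y,z)$, an orientation-preserving diffeomorphism of $T^2\times[0,1]$ that fixes the boundary slopes and sends $\alpha$ to $-\alpha$; it therefore carries $\xi_c^k$ to the same plane field with reversed coorientation, namely $-\xi_c^k$.

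The main obstacle is the sign-rigidity step together with distinctness: one must argue carefully that opposite adjacent signs really cancel (so a genuine $k$-torsion has constant sign) and that the two constant-sign layers are never isotopic. The latter is delicate when $k$ is an integer; here the relative Euler class reverses under coorientation reversal and settles the case immediately when $2k$ is odd, but when $2k$ is even it may vanish, and one must fall back on the finer sign-sequence invariant supplied by Honda's classification. Keeping the slope conventions (Remark~\ref{rmk:convention}) and the distinction between coorientation and orientation straight throughout is where the remaining bookkeeping lies.
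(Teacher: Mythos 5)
The paper offers no proof of Theorem~\ref{twolayers} at all: it is recorded as a known consequence of Honda's classification of tight contact structures on $T^2\times I$ with nonzero $I$-twisting \cite{Honda:classification1}, so there is no argument of the authors' to compare yours against. Your skeleton is the standard one and much of it is sound: the reduction to slope $0$, the decomposition of the model into $4k$ basic slices along the pre-Lagrangian tori, the symmetry $(x,y,z)\mapsto(y,-x,z+\tfrac1{4k})$ forcing all slices to carry the same sign, and the contactomorphism $(x,y,z)\mapsto(-x,-y,z)$ exchanging $\xi_c^k$ and $-\xi_c^k$ are all correct.

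The genuine gap is that your two remaining steps both rest on a single lemma that you neither prove nor correctly cite: if two basic slices of opposite sign are glued in the \emph{rotative} configuration $0\to\infty\to0$ (the Farey path wrapping once around $\R P^1$), the union is overtwisted. Your stated mechanism, ``a bypass attachment followed by its inverse; cancelling it would reduce the twisting,'' is not right --- a bypass of the opposite sign attached along a different arc is not an inverse operation, and the resulting layer is overtwisted rather than a tight layer of smaller twisting (the contradiction you want still follows, but from tightness of the model, not from a twisting count). More seriously, your distinctness argument appeals to ``sign sequences modulo shuffling,'' but Honda's shuffling statement classifies \emph{minimally twisting} layers along an embedded Farey path and does not apply verbatim to the wrapped path $0,\infty,0,\dots,0$; since, as you correctly observe, the relative Euler class vanishes when $k$ is an integer, that is precisely the case in which you are left with no working invariant. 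The repair is the lemma above: it shows that in any tight rotative layer the basic slice cut off by any convex torus of slope $\infty$ has a well-defined sign, so that sign is an isotopy invariant separating $\xi_c^k$ from $-\xi_c^k$ for every $k\in\frac12\N$. State that lemma (it is part of Honda's analysis of the rotative case in \cite{Honda:classification1}) and both halves of your argument close up.
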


\begin{remark}
	In this paper, since we only consider contact manifolds with convex boundary, we just use the term \dfn{Giroux $k$-torsion} for convex Giroux $k$-torsion. However, readers should notice that the original Giroux torsion has pre-Lagrangian torus boundary.  
\end{remark}

Next, we will review the classification of tight contact structures on a solid torus. Consider a solid torus with a fixed longitude. Here, our slope convention is $\frac{\text{meridian}}{\text{longitude}}$. We denote by $N(s)$, a solid torus with convex boundary having two dividing curves of slope $s$.

\begin{theorem}[Giroux \cite{Giroux:classification}, Honda \cite{Honda:classification1}]\label{thm:solid-torus}
	Let $(p,q)$ be a pair relatively prime integers satisfying $q>-p\geq1$ and
	\[
		\frac{q}{p} = [r_0, \cdots, r_k]
	\]
	for $r_i\leq-2$. Then there exist 
	\[
		|(r_0+1)\ldots(r_{k-1}+1)r_k|  
	\]
	tight contact structures on $N(\frac pq)$ up to isotopy fixing the characteristic foliation on the boundary. 
\end{theorem}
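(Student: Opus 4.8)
The plan is to reduce the count to the classification of basic slices by peeling $N(\frac pq)$ apart along convex tori, and then to account for the redundancy among the resulting sign sequences. Throughout, the meridian has slope $\infty$ and the boundary slope is $s=\frac pq\in(-1,0)$ (since $q>-p\ge1$), and one works up to isotopy fixing the characteristic foliation on $\partial N$.

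First I would establish the upper bound. Put $\partial N$ in standard convex form with two dividing curves of slope $s$ and, using Theorem~\ref{thm:LRP}, realize the meridian as a Legendrian curve bounding a convex meridian disk $D$. By Theorem~\ref{thm:twisting} the geometric intersection number of slope $\infty$ with slope $s$ is $q$ per dividing curve, so $|\partial D\cap\Gamma_{\partial N}|=2q$ and $\tb(\partial D)=-q$. The Giroux criterion for tightness forces $\Gamma_D$ to have no closed component, so $\Gamma_D$ consists of $q$ boundary-parallel arcs. A boundary-parallel arc of $\Gamma_D$ produces a bypass for $\partial N$ (cf.\ Theorem~\ref{thm:DiskImbalance}, applicable since $\tb(\partial D)=-q<-1$), and attaching it along a ruling curve moves the boundary slope one step toward $\infty$ in the sense of Theorem~\ref{thm:bypass}. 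Iterating the bypasses read off from $D$, I would factor
\[
  N\!\left(\tfrac pq\right)=\bigl(\,T^2\times I\ \text{basic slices}\,\bigr)\cup N(s'),
\]
where $N(s')$, with $s'$ adjacent to $\infty$ in the Farey graph, is a standard neighborhood of the Legendrian core carrying a unique tight contact structure. Crucially, the same meridian-disk bound shows the decomposition is minimally twisting, so no Giroux torsion can occur; this is what makes the list finite and confines each factor to a single basic slice $B_\pm$.

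The resulting data is a sign sequence, one $\pm$ per basic slice along the Farey path from $s$ to $\infty$. Reading the path through the negative continued fraction $\frac qp=[r_0,\dots,r_k]$ groups the Farey edges into $k+1$ \emph{continued-fraction blocks}, each block consisting of consecutive edges along which the path turns the same way. Within a single block, basic slices of equal sign commute up to isotopy (Honda's shuffling argument, built from the bypass moves of Theorem~\ref{thm:bypass-rotation} and Theorem~\ref{thm:bypass-sliding}), so only the \emph{number} of positive slices in a block is an isotopy invariant. Tracking the sign choices through the blocks — and accounting for the pivot edges shared between consecutive blocks — yields a contribution $|r_i+1|$ from each block $i<k$ and $|r_k|$ from the terminal block, whose product is the claimed count; the terminal factor is $|r_k|$ rather than $|r_k+1|$ precisely because the innermost block abuts the unique structure on the core neighborhood rather than another block. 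I expect this shuffling bookkeeping — simultaneously showing that same-block reorderings give genuinely isotopic structures and that the block junctions impose no further identifications — to be the main obstacle, as it is the combinatorial heart of the formula.

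Finally, for the lower bound I would construct one contact structure per equivalence class by explicitly stacking basic slices with the prescribed block-wise sign counts, and then show these are pairwise non-isotopic rel $\partial N$. The cleanest invariant is the relative Euler class in $H^2(N,\partial N)$ (equivalently, the signed count of positive basic slices in each block), which is unchanged by same-block shuffles but distinguishes different block-wise sign counts. This simultaneously certifies that the upper-bound count is sharp and that no two constructed structures coincide, completing the classification.
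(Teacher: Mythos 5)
This statement is quoted in the paper as a known theorem of Giroux and Honda; the paper gives no proof of it, so there is nothing internal to compare against. Your outline is, for all practical purposes, a faithful reconstruction of Honda's original argument in \cite{Honda:classification1}: convex meridian disk with $\tb(\partial D)=-q$, Giroux's criterion killing closed dividing curves, bypasses peeling the solid torus into a stack of basic slices over a standard core neighborhood, shuffling within continued-fraction blocks, and the relative Euler class for the lower bound. Two small points deserve care if you were to write this out in full. First, the reason the terminal factor is $|r_k|$ rather than $|r_k+1|$ is most cleanly seen as the last block acquiring one extra basic slice (the edge reaching the slope adjacent to the meridian), so that it carries $|r_k+2|+2=|r_k|$ sign-count choices; your phrasing (``abuts the unique structure on the core neighborhood'') gestures at this but could also be misread as claiming an identification, which is not what happens. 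Second, the parenthetical ``equivalently, the signed count of positive basic slices in each block'' overstates things: the relative Euler class lives in $H^2(N,\partial N)\cong\mathbb{Z}$ while the block-wise sign counts form a tuple, and the injectivity of the map from tuples to Euler classes is a genuine (if routine) computation exploiting the growth of the Farey denominators along the path --- it is exactly the computation Honda carries out, not a tautology.
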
 

Now we consider a solid torus with a different meridional slope. Take a core of $N(s)$ and perform an $r$-surgery on it. Then we obtain a solid torus with dividing slope $s$ and meridional slope $r$. Denote it by $N_r(s)$. From Theorem~\ref{thm:solid-torus}, we can deduce the following result.

\begin{proposition}[Conway--Min \cite{CM:figure-eight}]\label{prop:solid-torus}
	Let $(p,q)$ be a pair relatively prime integers. Then there exist $\Phi(\frac pq)$ tight contact structures on $N_{p/q}(\infty)$ up to isotopy fixing the characteristic foliation on the boundary.
\end{proposition}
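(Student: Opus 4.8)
The plan is to convert $N_{p/q}(\infty)$ into a solid torus in the \emph{standard form} of Theorem~\ref{thm:solid-torus} (meridian of slope $\infty$) by a change of coordinates on the boundary, read off the count from that theorem, and then match it with $\Phi(p/q)$ via a continued fraction identity. Recall that $N_{p/q}(\infty)$ is a solid torus whose boundary carries two dividing curves of slope $\infty$ and whose meridian has slope $p/q$. If $A \in SL(2,\Z)$, then the solid torus with meridian slope $m$ and dividing slope $d$ is orientation-preservingly diffeomorphic, as a solid torus with dividing set, to the one with slopes $Am$ and $Ad$ (a solid torus is determined up to such a diffeomorphism by its meridian, and the dividing set is transported along). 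Hence the tight count, taken up to isotopy fixing the boundary characteristic foliation, depends only on the pair $(m,d)$ up to the diagonal $SL(2,\Z)$-action, and I am free to normalize.

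First I would pick $A \in SL(2,\Z)$ with $A(p/q) = \infty$, which exists since $\gcd(p,q) = 1$ and is unique up to the maps $s \mapsto s + k$ fixing $\infty$. Under $A$ the dividing slope $\infty$ goes to $s' := A(\infty)$, and a short computation gives $s' \equiv -\bar a/q \pmod 1$, where $\bar a$ is the representative of $p^{-1}$ modulo $q$ with $0 < \bar a < q$. Reducing modulo $1$ (a longitudinal relabeling realized by a self-diffeomorphism of the solid torus, under which both the tight count and $\Phi$ are invariant) I may take $s' = p'/q' \in (-1,0)$ with $q' = q$ and $p' = -\bar a$. Then $N_{p/q}(\infty) \cong N(s')$, and Theorem~\ref{thm:solid-torus} computes the common tight count as $|(r_0+1)\cdots(r_{k-1}+1)\,r_k|$, where $q'/p' = [r_0,\dots,r_k]$.

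It remains to show this equals $\Phi(p/q)$. Writing $r = p/q$ reduced to $(0,1]$ and $-\tfrac1r = [c_0,\dots,c_n]$, we have $\Phi(p/q) = |c_0(c_1+1)\cdots(c_n+1)|$ by definition. Since $q'/p' = -q/\bar a$ with $\bar a \equiv p^{-1} \pmod q$, the crux is the classical Hirzebruch--Jung reversal identity: the negative continued fraction of $-q/\bar a$ is the reverse $[c_n,\dots,c_0]$ of that of $-q/p = -\tfrac1r$. This is seen most transparently by transposing the product of the matrices $\matrixs{c_i}{-1}{1}{0}$ encoding the continued fraction, an operation which sends $p \mapsto p^{-1} \pmod q$. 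Under this reversal the distinguished factor $r_k$ of Theorem~\ref{thm:solid-torus} (the last entry, not shifted by $1$) matches the first entry $c_0$ of $-\tfrac1r$ (also not shifted), while every other entry picks up a $+1$ in both products. Hence the two products coincide up to reordering, giving $|(r_0+1)\cdots(r_{k-1}+1)\,r_k| = |c_0(c_1+1)\cdots(c_n+1)| = \Phi(p/q)$.

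The routine parts are the coordinate change and the two mod-$1$ normalizations, which only demand care with the slope and clockwise/anticlockwise conventions of Remark~\ref{rmk:convention} and with the degenerate case $p/q \in \Z$ (where the meridian and dividing slope become Farey-adjacent and both sides equal $1$). The genuine content, and the step I expect to be the main obstacle, is the continued fraction reversal: verifying that normalizing the meridian to $\infty$ replaces $-\tfrac1r$ by its reverse, so that the two superficially different product formulas---with the distinguished, non-shifted factor appearing first in $\Phi$ but last in Theorem~\ref{thm:solid-torus}---yield the same number.
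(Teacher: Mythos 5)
Your argument is correct and follows exactly the route the paper indicates: the paper states this proposition as a citation to \cite{CM:figure-eight} and remarks only that it can be deduced from Theorem~\ref{thm:solid-torus}, which is precisely the deduction you carry out (normalize the meridian to $\infty$ by an $SL(2,\Z)$ change of basis, apply the Giroux--Honda count, and match the two products via the Hirzebruch--Jung reversal $-q/p \leftrightarrow -q/p^{-1}$). A spot check, \emph{e.g.}~$p/q=2/5$ giving $\bar a = 3$, $-5/3=[-2,-3]$ and count $|(-1)(-3)|=3=\Phi(2/5)$, confirms that your bookkeeping of the unshifted factor (first entry in $\Phi$, last entry in Theorem~\ref{thm:solid-torus}) is right.
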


\subsection{Contact surgery}\label{sec:contactsurgery} 
Let $L$ be a Legendrian knot in a contact $3$-manifold $(M,\xi)$ and $N$ a standard neighborhood of $L$. {\it Contact $(\frac{p}{q})$-surgery} on $L$ is defined as follows: let $(\mu,\lambda)$ be a meridian and contact framing of $L$, respectively. $(M_{(p/q)}(L), \xi_{(p/q)})$ is obtained by cutting $N$ from $M$ and re-gluing it via a diffeomorphism of $\bd N$ sending $\mu$ to $p\mu+q\lambda$. Then, extend the contact structure $(M\setminus N,\xi)$ to the entire $M_{(p/q)}(L)$. In general, the extension is not unique since there could be distinct tight contact structures on $N$ with a given boundary condition. If we only focus on the tight contact structures on $N$, Theorem~\ref{thm:solid-torus} provides the number of all possible extensions. 

\begin{theorem}[Ding and Geiges \cite{DG:surgery}]\label{contact surgery}
	Let $L$ be a Legendrian knot in $(M,\xi)$ and $p,q$ be relatively prime integers satisfying $\frac{p}{q}<0$. The number of contact structure induced by contact $(\frac{p}{q})$-surgery on $L$ is 
	\[
		|(r_0+1)\cdots(r_n+1)|
	\] 
	where 
	\[
		\frac{p}{q}=r_0+1-\frac{1}{r_1-\frac{1}{r_2\ldots-\frac{1}{r_n}}} = [r_0+1, r_1, \cdots,r_n]
	\] 
	for $r_i\leq-2$.
\end{theorem}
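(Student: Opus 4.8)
The plan is to reformulate the surgery geometrically and reduce the count to Honda--Giroux's classification of tight contact structures on a solid torus (Theorem~\ref{thm:solid-torus}). Let $N = N(L)$ be the standard neighborhood of $L$, so that $\bd N$ is a convex torus whose two dividing curves are parallel to the contact framing $\lambda$; in the basis $(\mu,\lambda)$, where $\mu$ is the meridian of $N$, the dividing slope is $0$ and the meridian slope is $\infty$. Contact $(\tfrac pq)$-surgery removes $N$ and glues back a solid torus $N'$ along a diffeomorphism sending the new meridian $\mu'$ to $p\mu + q\lambda$. Since the contact structure on $M \setminus N$ is fixed (with a fixed characteristic foliation on $\bd N$), counting the contact structures induced by the surgery amounts to counting the tight extensions across $N'$, i.e. the tight contact structures on the solid torus $N'$ whose boundary has two dividing curves of slope $0$ and whose meridian has slope $\tfrac pq$.

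First I would apply a change of coordinates $A \in \mathrm{SL}(2,\Z)$ carrying the meridian slope $\tfrac pq$ to $\infty$; since $\det A = 1$ this is realized by an orientation-preserving diffeomorphism of the solid torus and does not change the count. Concretely one takes $A = \matrixb{\alpha}{\beta}{-q}{p}$ with $\alpha p + \beta q = 1$, so that $A\vects{p}{q} = \vects{1}{0}$ and the boundary dividing slope $0$ is sent to $A\vects{0}{1} = \vects{\beta}{p}$, of slope $s' = \beta/p$. After normalizing $s'$ modulo the meridian into the range $q' > -p' \ge 1$ required by Theorem~\ref{thm:solid-torus} and writing it as a negative continued fraction, that theorem produces the number of tight fillings as a product of the form $|(s_0+1)\cdots(s_{m-1}+1)s_m|$. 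The hypothesis $\tfrac pq < 0$ is used here to guarantee we are in the regime where the tight fillings of $N'$ account for exactly the induced contact structures and no Giroux torsion is forced.

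The main work --- and the step I expect to be the real obstacle --- is the continued-fraction bookkeeping that turns the product $|(s_0+1)\cdots s_m|$ coming from Theorem~\ref{thm:solid-torus} into the claimed $|(r_0+1)\cdots(r_n+1)|$ attached to the expansion $\tfrac pq = [r_0+1,r_1,\dots,r_n]$. Two reconciliations are needed: one must track how the negative continued fraction of $s'$ relates to that of $\tfrac pq$ under the coordinate change $A$, and one must explain the apparent discrepancy in the final factor (the $s_m$ of Theorem~\ref{thm:solid-torus} versus the $r_n+1$ here). The latter is not an error but reflects that the outermost layer of $N'$ abuts the fixed contact structure on $M \setminus N$, so that one degree of freedom in the outer basic-slice block is pinned down by the gluing, while the innermost block nearest the meridian disk retains its extra choice. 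Finally I would verify sharpness: distinct tight fillings of $N'$ yield non-isotopic surgeries, since an ambient isotopy of the filled manifold restricting to the identity near the surgery solid torus would induce an isotopy of fillings rel the boundary characteristic foliation, contradicting Honda's classification. An equivalent route, closer to the original argument of Ding and Geiges, is to convert the contact $(\tfrac pq)$-surgery into a sequence of Legendrian $(-1)$-surgeries on a chain of Legendrian pushoffs dictated by $[r_0+1,r_1,\dots,r_n]$ and to count the inequivalent ways of distributing positive and negative stabilizations along the chain; the product $|(r_0+1)\cdots(r_n+1)|$ is precisely the number of such stabilization patterns up to the shuffling moves.
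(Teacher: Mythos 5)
The paper does not prove this statement: it is quoted verbatim from Ding--Geiges as background, so there is no internal proof to compare against, and your proposal has to stand on its own. Your overall strategy --- identify the induced contact structures with the tight extensions over the reglued solid torus $N'$, move the new meridian to $\infty$ by an element of $\mathrm{SL}(2,\Z)$, and invoke the solid-torus classification (Theorem~\ref{thm:solid-torus}) --- is the standard and correct route, and your coordinate change is set up properly. But the step you yourself flag as ``the real obstacle'' is the entire content of the theorem, and you do not carry it out: you never verify that the product $|(s_0+1)\cdots(s_{m-1}+1)s_m|$ attached to the transformed dividing slope equals $|(r_0+1)\cdots(r_n+1)|$ attached to the expansion $\frac pq=[r_0+1,r_1,\dots,r_n]$. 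The heuristic you offer in its place --- that ``one degree of freedom in the outer basic-slice block is pinned down by the gluing'' --- is not a proof and points in a misleading direction: in Honda's count the anomalous final factor $r_k$ (rather than $r_k+1$) is attached to the innermost continued-fraction block, the one abutting the meridian disk, and nothing about the gluing to $M\setminus N$ removes a choice. The two formulas reconcile simply because they are negative continued fractions of two \emph{different} slopes related by your matrix $A$; this is an elementary but genuinely nontrivial identity that must be checked, or else one should bypass it by decomposing $N'$ directly into continued-fraction blocks of basic slices of sizes $|r_0+2|,\dots,|r_n+2|$, each contributing $|r_i+2|+1=|r_i+1|$ sign patterns up to shuffling --- which is essentially your ``alternative route'' and is closer to what Ding--Geiges actually do.

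Your sharpness argument is also flawed. An isotopy between two surgered contact structures on the closed manifold has no reason to preserve the surgery solid torus setwise, let alone restrict to the identity near it, so you cannot conclude that it induces an isotopy of the two fillings of $N'$ rel the boundary characteristic foliation. Indeed, distinct tight extensions over $N'$ can perfectly well produce isotopic (or even overtwisted) contact structures on $M$ in general. The theorem should be read --- and is proved by Ding--Geiges --- as counting the tight extensions over the glued-up torus, i.e.\ the number of distinct ways of performing the contact surgery, not the number of pairwise non-isotopic outcomes on $M$; with that reading your sharpness step is unnecessary, and with the stronger reading it is false as argued. In summary: right skeleton, but the combinatorial identity at the heart of the statement is asserted rather than proved, and the distinctness claim is not established by the argument given.
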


If $\frac{p}{q} > 0$, use $\frac{p}{q-kp}$ instead where $k$ is a positive integer such that $q-kp<0$. 

In \cite{DGS:surgery}, Ding, Geiges and Stipsicz exhibited an algorithm that converts any contact surgery diagram into a $(\pm1)$-surgery diagram.

\begin{itemize}
	\item contact $(\frac{p}{q})$-surgery with $\frac{p}{q}<0$:
	\begin{enumerate}
		\item Stabilize $L$ $|r_0+2|$ times. Let this be $L_0$.
		\item For $i=1,\ldots,n$, let $L_i$ be the Legendrian push-off of $L_{i-1}$ and stabilize it $|r_i+2|$ times.
		\item Then a contact $(\frac{p}{q})$-surgery on $L$ corresponds to a contact $(-1)$-surgeries on a link $(L_0,\ldots,L_n)$.
	\end{enumerate}
	\item contact $(\frac{p}{q})$-surgery with $\frac{p}{q}>0$:
	\begin{enumerate}
		\item Choose a positive integer $k$ such that $q-kp<0$. Let $r'=\frac{p}{q-kp}$.
		\item Let $L_1,\ldots,L_k$ be k successive Legendrian push-offs of $L$.
		\item Then a contact $(\frac{p}{q})$-surgery on $L$ corresponds to $(+1)$-surgeries on $L,L_1,\ldots,L_{k-1}$ and a contact $(r')$-surgery on $L_k$.
	\end{enumerate}
\end{itemize}

Sometimes, a contact surgery produces an overtwisted contact structure.

\begin{proposition}\label{prop:OTsurgery}
	A contact $(r)$-surgery on a Legendrian knot $L$ results in an overtwisted contact structure if
	\begin{itemize}
		\item $r=0$ and $L$ is any Legendrian knot,
		\item $0< r < 1$ and $L$ is a Legendrian unknot with $\tb = -1$.
	\end{itemize} 
\end{proposition}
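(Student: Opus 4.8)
The plan is to handle the two bullets separately, in each case exhibiting an overtwisted disk, i.e.\ a disk whose Legendrian boundary has twisting $0$ with respect to the disk framing.

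For the first bullet, let $N = N(L)$ be a standard neighborhood of $L$, so that $\partial N$ is a convex torus whose two dividing curves have slope equal to the contact framing $\lambda$ of $L$; in the basis $(\mu,\lambda)$ of meridian and contact longitude this is slope $0$. Contact $(0)$-surgery reglues a solid torus $N'$ by sending $\mu \mapsto \lambda$, so the contact longitude $\lambda$ becomes the meridian of $N'$ and bounds the meridian disk $D$ of $N'$. I would then realize a Legendrian divide $L'$ of slope $0$ on $\partial N$, a closed leaf running along a dividing curve; by Theorem~\ref{thm:twisting} its twisting satisfies $\tw(L',\partial N) = -\tfrac12|L'\cap\Gamma_{\partial N}| = 0$. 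Since $L'$ is isotopic through $\partial N' = \partial N$ to $\partial D$, and the framing induced by $D$ agrees with the surface framing of $\partial N$ on a slope-$0$ curve, we get $\tw(L',D)=0$. A Legendrian curve bounding a disk with zero twisting is the boundary of an overtwisted disk, so $\xi_{(0)}$ is overtwisted. This argument is local to $N'$ and uses nothing about $L$, which is why it applies to every Legendrian knot.

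For the second bullet I would present the surgered manifold as $V \cup_T N'$, where $V = S^3 \setminus N(U)$ is a solid torus carrying the restriction of the tight contact structure on $S^3$, $T = \partial V = \partial N'$ is a convex torus with two dividing curves, and $N'$ is the reglued solid torus. In the basis above, $T$ has dividing slope $0$; since $\tb(U) = -1$ (so $\rot(U)=0$), the meridian of $V$ is the Seifert longitude $\lambda_0 = \mu + \lambda$ of $U$, of slope $1$, while the meridian of $N'$ is $p\mu+q\lambda$, of slope $r = p/q$. Thus $V$ is the standard tight solid torus whose meridian slope $1$ is Farey-adjacent to the boundary slope $0$, and $N'$ has meridian slope $r \in (0,1)$. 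The crucial observation is that $r \in (0,1)$ forces the two meridian slopes $1$ and $r$ to lie on the \emph{same} arc of the Farey graph cut off by the common dividing slope $0$, namely the arc of positive slopes between $0$ and $\infty$. To turn this into overtwistedness, I would shrink $N'$ to produce convex tori whose dividing slope moves along the Farey graph from $0$ toward the meridian $r$; realizing a Legendrian ruling curve of slope $r$ as the boundary of a meridian disk and applying the imbalance and disk-imbalance principles (Theorems~\ref{thm:Imbalance} and~\ref{thm:DiskImbalance}) together with bypass attachments (Theorem~\ref{thm:bypass}), I would push the dividing slope across the meridian slope $1$ of $V$ and realize a meridian curve with twisting $0$, hence an overtwisted disk. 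Equivalently, one quotes the classification of tight contact structures on the lens space $V \cup N'$: such a gluing is tight only when $0$ separates the meridians $1$ and $r$ on the Farey circle, which fails exactly when $0 < r < 1$.

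The main obstacle is the Farey-graph and sign bookkeeping in the second case. One must fix orientation conventions (as in Remark~\ref{rmk:convention}) so that ``$0<r<1$'' corresponds precisely to ``the meridians of $V$ and $N'$ lie on the same side of the dividing slope $0$,'' and then either invoke the lens-space classification or make the overtwisted disk explicit by tracking the bypass attachments. The delicate point is guaranteeing that the twisting of the Legendrian meridian reaches $0$ rather than stalling at $-1$; this is exactly where the hypothesis $r<1$ is used, since the excluded case $r=1$ instead produces the tight $S^1\times S^2$.
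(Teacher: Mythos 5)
The paper does not actually prove this proposition; it is quoted as a standard fact (it goes back to Ding--Geiges--Stipsicz), so your attempt can only be judged on its own terms. Your argument for the first bullet is correct and is the standard one: after contact $(0)$-surgery the Legendrian divide on the convex torus $\partial N$ becomes the boundary of the new meridian disk with twisting $0$, violating tightness.

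The second bullet contains a genuine gap, and in fact the ``crucial observation'' you lean on is false as stated. The condition that the two meridian slopes $1$ and $r$ lie on the same arc of positive slopes is \emph{not} what characterizes overtwistedness: for $r=2$ both meridians ($1$ and $2$) are positive, yet contact $(2)$-surgery on the $\tb=-1$ unknot is smooth $(+1)$-surgery and yields $(S^3,\xi_{std})$, which is tight (it is a contact $(+1)$-surgery followed by a Legendrian surgery, so its contact invariant is nonzero). A single point $0$ does not separate the Farey circle, so ``$0$ separates $1$ and $r$'' cannot be the dichotomy; the correct criterion is that the meridian slope of one solid torus must be \emph{realized} as the dividing slope of a convex torus inside the other, and which slopes are realized depends on the direction in which dividing slopes move as a tight solid torus is shrunk toward its core. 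That direction is exactly the point you defer to ``Farey-graph and sign bookkeeping,'' but it is the entire mathematical content of the statement: it is what distinguishes $r\in(0,1)$ from $r>1$.

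The clean repair works inside $V$ rather than $N'$, and avoids any appeal to the (a priori unknown) contact structure on the glued-in torus. Since the dividing slope $0$ of $T$ and the meridian slope $1$ of $V$ are Farey-adjacent, $V$ is a standard neighborhood of a Legendrian core $U^{*}$ (the dual unknot) with $\tb(U^{*})=-1$; the standard neighborhoods of its $k$-fold stabilizations are convex tori in $V$ whose dividing slopes, computed in the $(\mu,\lambda)$ basis you fixed, are $1-\tfrac{1}{k+1}$, i.e.\ $0,\tfrac12,\tfrac23,\dots\to 1$. Together with the intermediate basic slices this realizes every slope in $[0,1)$, so the hypothesis $0<r<1$ is used precisely to produce a convex torus $T_{r}\subset V$ of dividing slope exactly $r$. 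In the surgered manifold $T_{r}$ bounds the solid torus consisting of $N'$ together with the collar between $T$ and $T_{r}$, whose meridian slope is $r$; a Legendrian divide on $T_{r}$ then bounds the meridian disk with twisting $0$, and the first-bullet argument finishes the proof. For $r>1$ (or $r<0$) the slope $r$ is simply not in the realized arc $[0,1)$, which is why no contradiction arises there.
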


\subsection{Contact invariants in Heegaard Floer homology}
In this subsection, we review some useful properties of contact invariants in Heegaard Floer homology.

\begin{theorem}[Ozsv{\'a}th and Szab{\'o} \cite{OS:contact}, Ghiggini \cite{Ghiggini:HF}]\label{thm:contact-invariant}
	The Heegaard Floer contact invariant $c(\xi) \in \HFhat(-M)$ of $(M,\xi)$ satisfies the following properties.
	\begin{itemize}
		\item If $(M,\xi)$ is overtwisted, then $c(\xi)=0$.
		\item If $(M,\xi)$ is strongly fillable, then $c(\xi)\neq0$.
		\item Let $L$ be a Legendrian knot in $(M,\xi)$ and $(M_L,\xi_L)$ the result of Legendrian surgery on $L$. If $c(\xi) \neq 0$, then $c(\xi_L) \neq 0$. 
	\end{itemize}
\end{theorem}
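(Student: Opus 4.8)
The plan is to derive all three properties from two structural facts: a description of $c(\xi)$ as a distinguished class in the Heegaard Floer homology of an open book, and the naturality of this class under Stein (Weinstein) cobordisms. First I would fix the model for the invariant. By the Giroux correspondence, $(M,\xi)$ is carried by an open book $(S,\phi)$, and from $(S,\phi)$ one builds a pointed Heegaard diagram for $-M$ containing a distinguished intersection point $\mathbf{x}_\xi$; the contact invariant is the resulting class $c(\xi)=[\mathbf{x}_\xi]\in\HFhat(-M)$, independent (up to sign) of the chosen open book. This model is chosen precisely because it interacts transparently with the cobordism maps of the theory.

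The Legendrian-surgery property is then the most direct. Legendrian surgery on $L$ is the attachment of a Weinstein $2$-handle, yielding a Stein cobordism $W$ from $(M,\xi)$ to $(M_L,\xi_L)$; reversing orientation gives $-W\colon -M_L\to -M$ and an induced map $F_{-W}=\sum_{\spins}F_{-W,\spins}\colon\HFhat(-M_L)\to\HFhat(-M)$. The key lemma is the naturality identity $F_{-W}(c(\xi_L))=c(\xi)$, established by realizing the handle attachment at the level of adapted open-book Heegaard diagrams and computing the cobordism map on the distinguished generators via a neck-stretching argument. Granting this, $c(\xi_L)=0$ forces $c(\xi)=F_{-W}(0)=0$, which is exactly the contrapositive of the stated implication.

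For strong fillability I would cap off: given a strong filling $(X,\omega)$ of $(M,\xi)$, Eliashberg--Etnyre symplectic caps embed $X$ into a closed symplectic $4$-manifold, and $c(\xi)$ appears as the image, under the cobordism map of the capped filling, of a generator that is nonzero because the corresponding closed invariant is nonzero; hence $c(\xi)\neq 0$. The Stein case is the original Ozsv\'ath--Szab\'o computation through a Lefschetz fibration, and Ghiggini's refinement is what upgrades this from Stein/Weinstein to arbitrary strong fillings. For the overtwisted case I would instead use the open-book model directly: an overtwisted $(M,\xi)$ is supported by a negatively stabilized open book, and in the adapted diagram the negative stabilization produces a holomorphic disk exhibiting $\mathbf{x}_\xi$ as a boundary, so $c(\xi)=[\mathbf{x}_\xi]=0$; alternatively this vanishing can be deduced from the surgery-exact-triangle description of a Lutz twist together with the previous property.

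The hard part will be the naturality identity $F_{-W}(c(\xi_L))=c(\xi)$ underlying both fillability and Legendrian surgery: matching the analytically defined Heegaard Floer cobordism map with the geometric handle attachment requires a careful comparison of adapted Heegaard diagrams and a holomorphic gluing computation, and it is this comparison --- not the formal deductions drawn from it --- that carries the genuine content. The non-vanishing for strong fillings is a close second, since the capping-off must be arranged so that the relevant closed $4$-manifold invariant is controlled and nonzero.
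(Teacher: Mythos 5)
The paper does not prove this statement: it is quoted as background, with the proofs deferred to the cited works of Ozsv\'ath--Szab\'o and Ghiggini. Your outline is a faithful reconstruction of those standard arguments --- the open-book/distinguished-generator model of $c(\xi)$, the naturality identity $F_{-W}(c(\xi_L))=c(\xi)$ for the reversed Weinstein cobordism (which immediately gives the Legendrian-surgery property by contraposition), capping off a strong filling and invoking nonvanishing of the closed $4$-manifold invariant for the fillability statement, and vanishing via negative stabilization for the overtwisted case --- so there is nothing to compare against in the paper itself, and no gap to report. The only caveat worth recording is attributional: the negative-stabilization proof of vanishing is the later Honda--Kazez--Mati\'c reformulation rather than Ozsv\'ath--Szab\'o's original argument, and the upgrade from Stein to strong fillability is precisely Ghiggini's contribution; you have correctly isolated the naturality of $c(\xi)$ under Stein cobordisms as the step carrying the real analytic content.
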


There also have been several studies when a positive contact surgery preserves non-vanishing contact invariants, see \cite{Golla:surgery,LS:trefoil,LS:HF2,MT:surgery}. In this paper, we only need the result for the right-handed trefoil.

\begin{theorem}[Lisca and Stipsicz \cite{LS:trefoil}]\label{thm:rht}
	Let $L$ be a Legendrian right-handed trefoil in $(S^3,\xi_{std})$ with $tb(L)=1$. For any $r\in\mathbb{Q}\setminus\{0\}$, contact $(r)$-surgery on $L$ produces a tight contact structure with non-vanishing contact invariant.
\end{theorem}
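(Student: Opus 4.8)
The plan is to separate the negative and positive surgery coefficients, which behave quite differently. Throughout, note that it suffices to prove $c(\xi_{(r)}) \neq 0$: tightness then follows from the first item of Theorem~\ref{thm:contact-invariant}, since an overtwisted structure has vanishing contact invariant. For $r<0$ the argument is soft. By the Ding--Geiges--Stipsicz algorithm of Section~\ref{sec:contactsurgery}, contact $(r)$-surgery with $r<0$ is a sequence of contact $(-1)$-surgeries, i.e.\ Legendrian surgeries, on a link of stabilizations and Legendrian push-offs of $L$. Since $(S^3,\xi_{std})$ is Stein fillable, hence strongly fillable, $c(\xi_{std}) \neq 0$ by the second item of Theorem~\ref{thm:contact-invariant}, and the third item guarantees that Legendrian surgery preserves non-vanishing of the contact invariant. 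Iterating yields $c(\xi_{(r)}) \neq 0$, and this case uses nothing special about the trefoil.

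The content is the case $r>0$. First I would translate to the smooth picture: because $\tb(L)=1$, the contact framing of $L$ exceeds the Seifert framing by one, so contact $(r)$-surgery is smooth $(r+1)$-surgery on the right-handed trefoil $T_{2,3}$. The knot $T_{2,3}$ is a positive L-space knot of genus one, so $S^3_{p/q}(T_{2,3})$ is an L-space whenever $p/q \geq 2g-1 = 1$; as $r>0$ forces the smooth coefficient $r+1>1$, every positive contact surgery produces an L-space $Y_{(r)}$. Hence $\HFhat(-Y_{(r)})$ is free with exactly one generator in each $\spinc$ structure, and the class $c(\xi_{(r)})$ lies in a rank-one summand: it is either zero or a generator. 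The whole problem is thus reduced to showing that this class is nonzero.

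To pin down this class I would run the Ding--Geiges--Stipsicz algorithm again. Writing $r=p/q>0$ and choosing $k$ with $q-kp<0$, contact $(r)$-surgery decomposes as contact $(+1)$-surgeries on $L$ and $k-1$ of its Legendrian push-offs, followed by a \emph{negative} contact $(r')$-surgery on one further push-off, with $r'=p/(q-kp)<0$. The $k$ parallel $(+1)$-surgeries realize exactly a contact $(1/k)$-surgery on $L$ (a linking-matrix computation matches $|H_1|=k+1$ with that of smooth $\tfrac{k+1}{k}$-surgery), and the trailing negative surgery is Legendrian and so preserves non-vanishing by the third item of Theorem~\ref{thm:contact-invariant}. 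It therefore suffices to prove $c(\xi_{1/k}) \neq 0$ for every $k\geq 1$. Since contact $(1/k)$-surgery is a single contact $(+1)$-surgery stacked on contact $(1/(k-1))$-surgery, this is naturally set up for induction on $k$, with base case the single contact $(+1)$-surgery on $L\subset(S^3,\xi_{std})$.

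The inductive step is the behaviour of the contact class under a single contact $(+1)$-surgery, and this is the genuine obstacle, since, unlike Legendrian surgery, a positive contact surgery carries no automatic non-vanishing guarantee. Here I would invoke the Ozsv\'ath--Szab\'o surgery exact triangle together with their naturality statement realizing $c(\xi_{+1})$ as the image of $c(\xi)$ under one of the triangle maps $F$, so that $c(\xi_{1/k})\neq 0$ amounts to $c(\xi_{1/(k-1)})\notin\ker F$. The L-space observation above is exactly what makes this checkable: it collapses each group to rank one per $\spinc$ structure and, for grading reasons, constrains $F$ to be injective on the relevant summand, which one verifies from the explicitly computable $d$-invariants of these small Seifert fibered L-spaces. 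A cleaner alternative is Golla's positive-surgery formula, which expresses $c(\xi_{1/k})$ directly through the transverse (LOSS) invariant of $L$; the latter is nonzero because $L$ is a maximal-$\tb$ Legendrian trefoil whose transverse push-off, of maximal self-linking $\self=\tb-\rot=1=2g-1$, is the binding of a supporting open book for $\xi_{std}$ and so carries the nonzero contact class, with $\HFKhat(T_{2,3})$ being $\Z$ in the three bigradings $(1,0)$, $(0,-1)$, $(-1,-2)$. I expect the technical heart to be the bookkeeping of $\spinc$ structures and gradings through the exact triangle, matching $\spinc(\xi_{1/k})$ to the generator on which $F$ is injective.
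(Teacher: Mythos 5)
First, a point of reference: the paper does not prove this statement at all. Theorem~\ref{thm:rht} is imported as a black box from Lisca and Stipsicz, so there is no in-paper argument to compare against; your proposal has to be judged against the cited proof. In outline, your sketch is a faithful reconstruction of that proof: the $r<0$ case via Stein fillability and the third item of Theorem~\ref{thm:contact-invariant}; the Ding--Geiges--Stipsicz reduction of a general positive coefficient to a contact $(1/k)$-surgery followed by Legendrian surgeries; the observation that all the relevant surgeries on $T_{2,3}$ land on L-spaces; and an induction on $k$ driven by the fact that $c(\xi_{+1})$ is the image of $c(\xi)$ under the map induced by the (reversed) $2$-handle cobordism. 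All of this is correct and is essentially the published route.

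The one step whose justification is wrong as stated is the inductive step, and it is the heart of the matter. You claim the cobordism map $F$ is injective on the relevant summand ``for grading reasons,'' verified from $d$-invariants. Grading and $d$-invariant computations can only show that individual $\spinc$-components of a cobordism map \emph{vanish} (a nonzero image would have to land in a grading where the target has no generator); they cannot certify that a component is \emph{nonzero}, which is what you need. The actual source of non-vanishing is exactness plus a rank count: the surgery triangle containing the cobordism from $S^3_{k/(k-1)}(T_{2,3})$ to $S^3_{(k+1)/k}(T_{2,3})$ has third vertex $S^3_{1}(T_{2,3})$, the Poincar\'e homology sphere, an L-space with $|H_1|=1$. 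With total ranks $1$, $k$, $k+1$ and all groups free, exactness forces the map out of the rank-one group to vanish rationally and hence forces $\HFhat(-S^3_{k/(k-1)}) \to \HFhat(-S^3_{(k+1)/k})$ to be injective --- no $\spinc$ or grading bookkeeping is required, but one does need to identify the third vertex of the Farey triangle correctly (the other candidate, $(2k+1)/(2k-1)$, would give a triangle in which the rank count fails). With that mechanism substituted --- or by invoking Golla's positive-surgery criterion through the nonzero transverse invariant of the $\self=1$ trefoil, as in your proposed alternative --- the proposal does yield a complete proof.
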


\section{The lower bounds}\label{sec:lower-bounds}

In this section, we will construct $2\Phi(r) + \Psi(r)$ isotopy classes of tight contact structures on $M(n,r)$ for $n > 2$ and $r \geq 1$ and $\Psi(r)$ isotopy classes of tight contact structures for $n>2$ and $r < 1$ using contact surgery diagrams. We first construct contact structures counted by $\Psi$ and then consider contact structures counted by $\Phi$. After that, we will show that the contact structures counted by $\Psi$ are distinct from the ones counted by $\Phi$.

\subsection{Tight contact structures counted by \texorpdfstring{$\Psi$}{Psi}.}
Consider the surgery diagram for $M(n,r)$ shown in Figure~\ref{fig:Weeks1}. A left-handed Rolfsen twist on $K_1$ turns the diagram into the one in Figure~\ref{fig:Rolfsen}. We then perform a sequence of inverse slam--dunk moves to turn $K_1$ into a chain of unknots. Since we have 
\begin{align*}
	-\frac{n+1}{n}= [\overbrace{-2,\cdots, -2}^n], 
\end{align*}
there are $n$ components in the chain of the unknots and the surgery coefficients are all $-2$, see Figure~\ref{fig:slamdunk1}. We can turn this diagram  into a contact surgery diagram as shown in Figure~\ref{fig:legendrian1}. Applying the algorithm in Section~\ref{sec:contactsurgery}, we can convert the diagram into a $(\pm1)$-contact surgery diagram. Notice that there are choices of stabilizations during the conversion.

\begin{figure}[htbp]
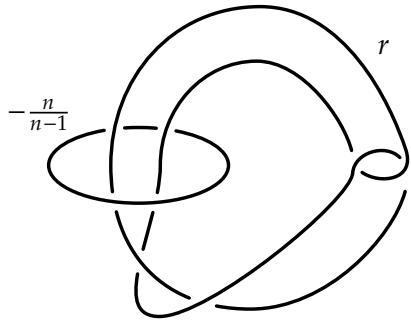

\begin{center}
	\vspace{0.5cm}
	\begin{overpic}[tics=20]{figures/rolfsen}
		\put(-15,75){$-\frac{n}{n-1}$}
		\put(125,100){$r$}
	\end{overpic}
	\caption{A surgery diagram for $M(n,r)$.}
	\label{fig:Rolfsen}
\end{center}
\end{figure}
	
\begin{figure}[htbp]
\begin{center}
	\vspace{0.5cm}
	\begin{overpic}[tics=20]{figures/slamdunk1}
		\put(-3,68){\footnotesize $-2$}
		\put(74,63){\footnotesize $-2$}
		\put(103,70){\footnotesize $-2$}
		\put(183,60){\footnotesize $-2$}
		\put(240,80){$r$}
	\end{overpic}
	\caption{The result of inverse slam--dunk moves.}
	\label{fig:slamdunk1}
\end{center}
\end{figure}
	
\begin{figure}[htbp]
\begin{center}
	\begin{overpic}[tics=20]{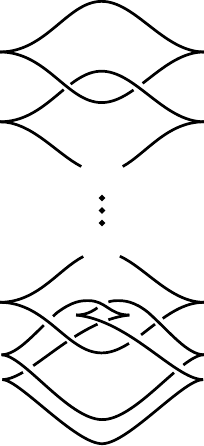}
		\put(105,188){\footnotesize $(-1)$}
		\put(105,153){\footnotesize $(-1)$}
		\put(105,66){\footnotesize $(-1)$}
		\put(105,40){\footnotesize $(r-1)$}
	\end{overpic}
	\caption{A contact surgery diagram for tight contact structures on $M(n,r)$ counted by $\Psi$.}
	\label{fig:legendrian1}
\end{center}
\end{figure}

\begin{proposition}\label{prop:surgerytightPsi}
	The contact surgery diagram in Figure~\ref{fig:legendrian1} induces a tight contact structure for any choice of stabilizations for any $r \neq 1$.
\end{proposition}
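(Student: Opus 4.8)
The plan is to prove tightness by showing that the induced contact structure has non-vanishing Heegaard Floer contact invariant: by the first bullet of Theorem~\ref{thm:contact-invariant}, $c(\xi)\neq 0$ rules out overtwistedness and hence gives tightness. The advantage of tracking $c(\xi)$ rather than, say, Stein fillability is that it handles the positive surgery regime ($r>1$, so $r-1>0$) and the negative one ($r<1$, so $r-1<0$) on $K_2$ in a single uniform argument, and it matches the ``any choice of stabilizations'' clause naturally.

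First I would reorganize the surgeries. Since contact surgeries on disjoint Legendrian knots can be performed in any order without changing the contactomorphism type of the result (surgery is supported in disjoint neighborhoods), I may carry out the contact $(r-1)$-surgery on $K_2$ first, inside $(S^3,\xi_{std})$, and only afterwards perform the $n$ contact $(-1)$-surgeries on the chain of unknots. The key geometric claim — and the step I expect to be the main obstacle — is that, once the left-handed Rolfsen twist and the inverse slam--dunk moves have been carried out, $K_2$ is a Legendrian \emph{right-handed trefoil} with $\tb(K_2)=1$ in $(S^3,\xi_{std})$, while each chain component is a Legendrian unknot along which contact $(-1)$-surgery is an ordinary Legendrian (Stein) surgery. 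Establishing this requires carefully following $K_2$ through the Rolfsen twist, which inserts a full twist into the band of $K_2$ running through the twisting disk of $K_1$ and thereby knots it into a trefoil, and then through the slam--dunks, and finally verifying that the Thurston--Bennequin count is exactly $1$; the handedness of the trefoil is dictated by the sign of the Rolfsen twist.

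Granting this claim, the contact-invariant bookkeeping is short. Starting from $(S^3,\xi_{std})$, which has $c\neq 0$ since it is Stein fillable, Theorem~\ref{thm:rht} shows that contact $(r-1)$-surgery on the $\tb=1$ right-handed trefoil $K_2$ produces a contact structure with non-vanishing contact invariant for every $r-1\neq 0$, i.e.\ for every $r\neq 1$. This is precisely where the hypothesis $r\neq 1$ enters, and it is also what accounts for the phrase ``for any choice of stabilizations'': the stabilization choices in the Ding--Geiges--Stipsicz conversion of the contact $(r-1)$-surgery are exactly the choices of tight contact structure on the surgery solid torus glued to $K_2$, and Theorem~\ref{thm:rht} asserts non-vanishing of the contact invariant for each of them.

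Finally I would reinstate the chain. Each contact $(-1)$-surgery on a chain component is a Legendrian surgery, so by the third bullet of Theorem~\ref{thm:contact-invariant} it preserves non-vanishing of the contact invariant; performing all $n$ of them in turn keeps $c\neq 0$. Hence the contact structure induced by the full diagram in Figure~\ref{fig:legendrian1} has non-vanishing contact invariant and is therefore tight, for every $r\neq 1$ and every choice of stabilizations. (For $r<1$ one could alternatively observe that the entire diagram becomes a sequence of Legendrian surgeries and invoke Stein fillability directly, but the argument above subsumes both signs.) The only genuine work is the geometric identification of $K_2$ as a maximal right-handed trefoil in the second paragraph; everything else is formal.
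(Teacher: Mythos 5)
Your proposal is correct and follows essentially the same route as the paper: identify the $(r-1)$-framed component of Figure~\ref{fig:legendrian1} as a Legendrian right-handed trefoil with $\tb=1$, invoke Theorem~\ref{thm:rht} to get a non-vanishing contact invariant, and then note that the remaining contact $(-1)$-surgeries are Legendrian surgeries, which preserve non-vanishing of $c$ by Theorem~\ref{thm:contact-invariant}. The only packaging difference is that the paper first converts the whole diagram into a $(\pm1)$-surgery diagram and applies Theorem~\ref{thm:rht} only to the single resulting contact $(+1)$-surgery on the trefoil, handling $r<1$ separately via Stein fillability, whereas you apply Theorem~\ref{thm:rht} directly to the contact $(r-1)$-surgery and treat both signs of $r-1$ uniformly.
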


\begin{proof}
	We first consider the case $r > 1$. Fix a choice of stabilizations. Since $r > 1$, the contact surgery coefficient on the trefoil component in Figure~\ref{fig:legendrian1} is positive. Thus there exists a single link component $L$ having contact surgery coefficient $(+1)$ after the conversion, and all other components have the contact surgery coefficient $(-1)$. Let $(S^3_2(L), \xi_L)$ be the result of a contact $(+1)$-surgery on $L$. $L$ is a Legendrian right-handed trefoil with $\tb(L)=1$, hence the Heegaard Floer invariant of $\xi_L$ is non-vanishing by Theorem~\ref{thm:rht}. Since the contact structure on $M(n,r)$ is constructed from $(S^3_2(L), \xi_L)$ by Legendrian surgeries on the other link components, the Heegaard Floer invariant of the resulting contact structure is non-vanishing by Theorem~\ref{thm:contact-invariant}. Thus the contact structure on $M(n,r)$ is tight.
	
	Now we consider the case $r < 1$. Fix a choice of stabilizations. Since $r < 1$, the contact surgery coefficient on the trefoil component in Figure~\ref{fig:legendrian1} is negative. Thus this contact surgery diagram represents a Stein fillable contact structure and hence it is tight. 
\end{proof}

\begin{proposition}\label{prop:surgeryPsi}
	The contact surgery diagram in Figure~\ref{fig:legendrian1} induces $\Psi(r)$ pairwise non-isotopic tight contact structures for any $r \in \mathbb{Q}$, distinguished by their Heegaard Floer invariants.
\end{proposition}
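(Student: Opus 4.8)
The plan is to run the Ding--Geiges--Stipsicz conversion of Section~\ref{sec:contactsurgery} on the diagram of Figure~\ref{fig:legendrian1}, count the genuinely distinct stabilization choices, and then separate the resulting tight structures using the $\spinc$-decomposition of their (nonvanishing) Heegaard Floer contact invariants. First dispose of the degenerate case: when $r=1$ the contact coefficient on the trefoil is $r-1=0$, so by Proposition~\ref{prop:OTsurgery} the surgery is overtwisted and yields $0=\Psi(1)$ tight structures. So assume $r\neq 1$, where Proposition~\ref{prop:surgerytightPsi} already guarantees that every stabilization choice produces a tight contact structure; the content is therefore to show that exactly $\Psi(r)$ distinct isotopy classes appear.

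The $n$ chain components all carry coefficient $(-1)$ and so are rigid Legendrian (Stein) surgeries admitting no stabilization choice. Hence the contact structure is fixed away from a standard neighborhood $N$ of the trefoil, and the only freedom is the tight extension across $N$. Counting these extensions is exactly the Ding--Geiges count of Theorem~\ref{contact surgery} applied to the coefficient $r-1$ (which is negative when $r<1$, and is reduced to a single negative contact surgery after the positive push-offs when $r>1$). A direct negative continued fraction computation, using $-\tfrac{1}{r-1}=\tfrac{1}{1-r}$ together with the definition of $\Phi$, identifies this count with $\Phi\!\left(\tfrac{1}{1-r}\right)=\Psi(r)$. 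Thus the diagram yields at most $\Psi(r)$ isotopy classes, represented by the $\Psi(r)$ inequivalent tight extensions of $N$ furnished by Theorem~\ref{thm:solid-torus}.

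It remains to show these $\Psi(r)$ structures are pairwise non-isotopic, and this is the crux. I would distinguish them by the $\spinc$ structures carrying their contact invariants. Each tight extension of $N$ is recorded by a distinct relative Euler class, realized in the $(\pm 1)$-diagram as a distinct tuple of rotation numbers on the stabilized trefoil and its push-offs. For a contact structure presented by contact $(\pm 1)$-surgery on a Legendrian link $L_1\cup\cdots\cup L_k$ in $(S^3,\xi_{std})$, the class $\spinc(\xi)$ is determined by $c_1(\xi)\in H^2(M(n,r))$, which is computed from the rotation numbers $\rot(L_i)$ and the linking matrix of the surgery. The plan is to evaluate this class and show that distinct stabilization data give distinct values, so that the $\Psi(r)$ invariants sit in $\Psi(r)$ different summands of $\HFhat(-M(n,r))$. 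Since each invariant is nonvanishing by Proposition~\ref{prop:surgerytightPsi} (via Theorems~\ref{thm:contact-invariant} and~\ref{thm:rht}), structures in different summands cannot be isotopic, and we conclude there are exactly $\Psi(r)$ isotopy classes.

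The main obstacle is precisely this last injectivity. Because $M(n,r)$ is a rational homology sphere, $H^2(M(n,r))$ is finite, so distinctness cannot be read off homologically and must be verified by hand: one has to check, from the explicit linking matrix of the surgery presentation, that the rotation-number differences arising from genuinely inequivalent tight extensions of $N$ remain nonzero in $H^2(M(n,r))$. This is the step where the arithmetic of the Whitehead-link surgery actually enters, and it is the analogue of the corresponding $\spinc$ computation carried out by Conway and the first author for the figure-eight knot.
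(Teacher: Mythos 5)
Your setup (the $r=1$ degenerate case, the rigidity of the $(-1)$-framed chain, and the count of $\Psi(r)=\Phi\bigl(\tfrac{1}{1-r}\bigr)$ stabilization choices via the Ding--Geiges--Stipsicz conversion) matches the paper. The gap is in the distinguishing step, and it is fatal as proposed: you plan to separate the $\Psi(r)$ structures by the $\spinc$ structures (equivalently $c_1(\xi)\in H^2(M(n,r))$) carrying their contact invariants. This cannot work in general. In the paper's own Chern class computation (proof of Proposition~\ref{prop:lowerbounds}), every structure counted by $\Psi$ has $\PD(c_1)=0\cdot[\mu]+m[\nu]$ with $[\nu]$ of order $|p|$ where $r=p/q$; so, for example, when $r=-1$ one has $\Psi(-1)=2$ but $[\nu]=0$ in $H_1(M(n,-1))\cong\Z_n$, and \emph{both} tight structures have $c_1=0$. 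They lie in the same $\spinc$ summand of $\HFhat(-M(n,r))$ and are genuinely non-isotopic, so the ``verify injectivity by hand from the linking matrix'' step you flag as the crux is not merely hard --- it is false. The paper only uses the class in $H^2(M(n,r))$ to separate the $\Psi$-family from the $\Phi$-family (which differ in the $[\mu]$-component), never to separate structures within the $\Psi$-family.

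The missing idea is to evaluate the first Chern class on the four-dimensional object rather than on the three-manifold. For $r>1$ the conversion produces a Stein cobordism $W$ from $S^3_2(L)$ ($L$ the $\tb=1$ right-handed trefoil, whose contact $(+1)$-surgery has nonvanishing invariant by Theorem~\ref{thm:rht}) to $M(n,r)$; the classes $[\Sigma_i\cup 2\cdot C_i]\in H_2(W)$ built from rational Seifert surfaces and handle cores pair with $c_1(J)$ to give $2\rot_{\Q}(L_i)$, so distinct stabilization choices give non-isomorphic Stein structures on $W$ --- no finite-group collapse occurs because $H_2(W)$ is not torsion. Plamenevskaya's theorem then converts non-isomorphic Stein cobordisms from a contact manifold with nonzero invariant into distinct contact invariants (hence non-isotopic contact structures) on $M(n,r)$. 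For $r<1$ the same argument runs with the Stein filling $X$ in place of the cobordism, evaluating $c_1$ against the generators of $H_2(X;\Z)$. Without this 4-dimensional input your argument establishes only the upper bound of $\Psi(r)$ isotopy classes, not the lower bound.
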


\begin{proof}
	We first consider the case $r > 1$. We will use the same notations used in Proposition~\ref{prop:surgerytightPsi}. As we discussed in the proof of Proposition~\ref{prop:surgerytightPsi}, there exists a single link component $L$ with the contact surgery coefficient $(+1)$ after the conversion, and all other components have the contact surgery coefficient $(-1)$. Thus there exists a Stein cobordism $W$ from $S^3_2(L)$ to $M(n,r)$ induced from the contact surgery diagram. We claim that different choices of stabilizations give rise to non-isomorphic Stein structures and by \cite[Theorem~2]{Plamenevskaya:cobordism} the induced contact structures on $M(n,r)$ are also non-isotopic, distinguished by their Heegaard Floer invariants. To prove the claim, fix two different choices of stabilizations. Suppose $L_1,...,L_m$ are the push-offs of $L$ after the conversion. Since each $L_i$ is rationally null-homologous in $S^3_2(L)$, we can calculate its rational rotation number in $(S^3_2(L),\xi_L)$. Recall that a stabilization behaves precisely the same way as in the null-homologous case, i.e. $\rot_\mathbb{Q}(S_\pm(L)) = \rot_\mathbb{Q}(L) \pm 1$ (cf.~\cite[Lemma~1.3]{BE:rational}). 
    
  Since we chose different sets of stabilizations, there exists at least one index $i$ such that the values of $\rot_\mathbb{Q}(L_i)$ are different. Now we compare the first Chern class of the Stein structure $J$ induced by the surgery diagrams. Using the formula from \cite[Lemma~4.1]{LW:rational}, we obtain 
	\[
		\langle c_1(J), [\Sigma_i \cup 2 \cdot C_i] \rangle = 2 \cdot \rot_\mathbb{Q}(L_i),
	\]
	where $\Sigma_i$ is a rational Seifert surface of $L_i$, $C_i$ is the core of the Weinstein $2$-handle attached along $L_i$. Thus, the two different values of $\rot_\mathbb{Q}(L_i)$ lead to non-isomorphic Stein structures, proving the claim. 

	By Proposition~\ref{prop:surgerytightPsi}, all contact structures are tight for any choices of stabilizations. Hence we are only left to show that there exist $\Psi(r)$ different choices of stabilizations for the contact surgery diagram in Figure~\ref{fig:legendrian1}. According to the algorithm in Section~\ref{sec:contactsurgery}, if $r > 1$, a contact $(r-1)$-surgery on $L$ is equivalent to a contact $(+1)$-surgery on $L$ and $(-\frac{r-1}{r-2})$-surgery on its push-off. Consider the negative continued fraction expansion $-\frac{r-1}{r-2} = [r_0,r_1,...,r_k]$. Then the number of stabilization choices for a contact $(-\frac{r-1}{r-2})$-surgery is $|r_0(r_1+1) \cdots (r_k+1)|$, which is $\Phi(\frac{r-2}{r-1}) = \Phi(\frac{1}{1-r}) = \Psi(r)$. 
	
	If $r =1$, we have $\Psi(1)=0$ and the contact surgery diagram produces an overtwisted contact structure by Proposition~\ref{prop:OTsurgery}.

	Lastly, we consider the case $r < 1$. As discussed in Proposition~\ref{prop:surgerytightPsi}, since all contact surgery coefficients are negative, the surgery diagram represents a Stein manifold $(X,J)$ and the first Chern class of the Stein structure will evaluate to $\rot(L_i)$ on the set of generators of $H_2(X;Z)$ given by the cores of $2$-handles and the Seifert surfaces for the attaching spheres, see \cite{Gompf:stein}. Since we chose different sets of stabilizations, there exists at least one index $i$ such that the values of $\rot(L_i)$ are different. Thus, the two different values of $\rot(L_i)$ lead to non-isomorphic Stein structures. Now we remain to show that there exist $\Psi(r)$ different choices of stabilizations for the contact surgery diagram in Figure~\ref{fig:legendrian1}. Consider the negative continued fraction expansion $r-1 = [r_0,r_1,...,r_k]$. According to the algorithm in Section~\ref{sec:contactsurgery}, the number of stabilization choices for a contact $(r-1)$-surgery is $|r_0(r_1+1) \cdots (r_k+1)|$, which is $\Phi(\frac{1}{1-r}) = \Psi(r)$. 
\end{proof}

\subsection{Tight contact structures counted by \texorpdfstring{$\Phi$}{Phi}.} 
Since the Whitehead link is symmetric, we can switch the link components as shown in the first drawing of Figure~\ref{fig:Weeks2}. Perform a left-handed Rolfsen twist on $K_2$, and we obtain a surgery diagram as shown in the right drawing of Figure~\ref{fig:Weeks2}. After realizing this link as a Legendrian link, we obtain a contact surgery diagram shown in Figure~\ref{fig:legendrian2}. Applying the algorithm from Section~\ref{sec:contactsurgery}, we can convert the diagram into a $(\pm1)$-contact surgery diagram. Again, there are choices of stabilizations during the conversion.

The proofs of Proposition~\ref{prop:surgerytightPhi} and \ref{prop:surgeryPhi} are essentially identical to the proofs of Proposition~\ref{prop:surgerytightPsi} and \ref{prop:surgeryPsi}, respectively. We leave the first one as an exercise for the readers.

\begin{figure}[htbp]
\begin{center}
	\begin{overpic}[tics=20]{figures/weeks2}
		\put(0,60){\footnotesize $r$}
		\put(110,70){\footnotesize $n$}
		\put(180,65){$-\frac{r}{r-1}$}
		\put(310,70){\footnotesize $n$}
	\end{overpic}
	\caption{Another surgery diagram for $M(n,r)$.}
	\label{fig:Weeks2}
\end{center}
\end{figure}

\begin{figure}[htbp]
\begin{center}
	\begin{overpic}[tics=20]{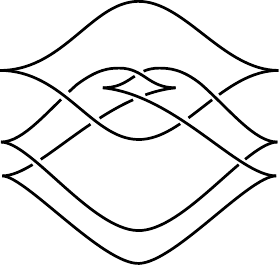}
		\put(140,90){$(-\frac{1}{r-1})$}
		\put(140,45){\footnotesize $(n-1)$}	
	\end{overpic}
	\caption{A contact surgery diagram for tight contact structures on $M(n,r)$ counted by $\Phi$.}
	\label{fig:legendrian2}
\end{center}
\end{figure}

\begin{proposition}\label{prop:surgerytightPhi}
	The contact surgery diagram in Figure~\ref{fig:legendrian2} induces a tight contact structure for any choices of stabilizations for $r\geq 1$. \qed
\end{proposition}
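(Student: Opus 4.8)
The plan is to transcribe the proof of Proposition~\ref{prop:surgerytightPsi} with the two components of the Whitehead link interchanged, so that the component carrying the coefficient $(n-1)$ in Figure~\ref{fig:legendrian2} now plays the role of the right-handed trefoil. The first thing I would check is precisely this identification: the relevant component is the image of $K_1$ after switching the (symmetric) components and performing the left-handed Rolfsen twist on $K_2$, and because $K_1$ threads the twisting disk through the Whitehead clasp, the twist should convert it into a right-handed trefoil while leaving its topological coefficient equal to $n$ (the linking number is $0$); realized with $\tb=1$ this gives contact coefficient $n-1$, which is positive since $n>2$.

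Once this is in place, the case $r>1$ runs exactly as before. The point that makes $r\ge1$ the natural hypothesis is that the second component carries contact coefficient $-\tfrac1{r-1}$, which is negative precisely when $r>1$. So I would fix a choice of stabilizations and run the Ding--Geiges--Stipsicz algorithm of Section~\ref{sec:contactsurgery}: the positive $(n-1)$-surgery on the trefoil becomes a single contact $(+1)$-surgery on the trefoil $L$ together with negative surgeries on a push-off, while the $-\tfrac1{r-1}$-surgery contributes only negative surgeries. Writing $(S^3_2(L),\xi_L)$ for the contact $(+1)$-surgery on $L$, Theorem~\ref{thm:rht} gives $c(\xi_L)\neq0$; since $M(n,r)$ is then built from $(S^3_2(L),\xi_L)$ by Legendrian surgeries on the remaining components, Theorem~\ref{thm:contact-invariant} keeps the contact invariant non-vanishing for every choice of stabilizations, which forces tightness.

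For the endpoint $r=1$ the coefficient $-\tfrac1{r-1}$ degenerates, and I would argue separately: topologically the second component then carries the slope $-\tfrac{r}{r-1}=\infty$, a trivial filling that erases the component, so the diagram reduces to a single contact $(n-1)$-surgery on the right-handed trefoil, which is tight with non-vanishing invariant by Theorem~\ref{thm:rht} because $n-1\neq0$.

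I expect the only genuine obstacle to be the geometric bookkeeping rather than any Floer-theoretic input: one must follow the Whitehead clasp through the Rolfsen twist carefully enough to be certain the surgered $K_1$ is a right-handed (not left-handed) trefoil with the stated framing, since Theorem~\ref{thm:rht} is sensitive to handedness, and one must be comfortable interpreting the degenerate $r=1$ diagram. The remaining steps are a direct copy of the $r>1$ argument of Proposition~\ref{prop:surgerytightPsi}.
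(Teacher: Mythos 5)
Your proposal is correct and takes essentially the same approach as the paper, which explicitly leaves this proof as an exercise on the grounds that it is ``essentially identical'' to that of Proposition~\ref{prop:surgerytightPsi}: you correctly identify the $(n-1)$-framed component as a Legendrian right-handed trefoil with $\tb=1$, observe that for $r>1$ the Ding--Geiges--Stipsicz conversion leaves a single contact $(+1)$-surgery on that trefoil with all other coefficients negative, and conclude via Theorems~\ref{thm:rht} and~\ref{thm:contact-invariant}. Your separate treatment of the degenerate endpoint $r=1$ (where the unknot component carries smooth slope $\infty$ and is erased, reducing to a contact $(n-1)$-surgery on the trefoil) is a sensible reading of the diagram and is consistent with the paper.
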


\begin{remark}
	By Proposition~\ref{prop:OTsurgery}, the surgery diagram in Figure~\ref{fig:legendrian2} results in overtwisted contact structures if $r < 0$.
\end{remark}

\begin{proposition} \label{prop:surgeryPhi}
	The contact surgery diagram in Figure~\ref{fig:legendrian2} induces $2\Phi(r)$ pairwise non-isotopic tight contact structures for $r\geq1$, distinguished by their Heegaard Floer invariants. 
\end{proposition}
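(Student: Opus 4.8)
The plan is to run the argument of Proposition~\ref{prop:surgeryPsi} with the two components of the Whitehead link interchanged. First I would identify, in Figure~\ref{fig:legendrian2}, the component $L$ carrying the contact coefficient $(n-1)$ as a Legendrian right-handed trefoil with $\tb(L)=1$; by the symmetry of the Whitehead link this is the exact analogue of the trefoil in Figure~\ref{fig:legendrian1}, now inheriting its coefficient from the integer $n$-surgery. For $n>2$ the coefficient $(n-1)$ is positive and, for $r>1$, the coefficient $-\frac1{r-1}$ is negative, so after running the algorithm of Section~\ref{sec:contactsurgery} there is a single component with contact coefficient $(+1)$ while every other push-off carries $(-1)$. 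Letting $(S^3_2(L),\xi_L)$ denote the contact $(+1)$-surgery on $L$, Theorem~\ref{thm:rht} gives $c(\xi_L)\neq0$, and the remaining $(-1)$-surgeries assemble into a Stein cobordism $W$ from $S^3_2(L)$ to $M(n,r)$; by Theorem~\ref{thm:contact-invariant} the contact structure on $M(n,r)$ has non-vanishing invariant, which reproves Proposition~\ref{prop:surgerytightPhi}.

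Next I would separate the resulting contact structures exactly as in Proposition~\ref{prop:surgeryPsi}. Each push-off $L_i$ is rationally null-homologous in $S^3_2(L)$, so $\rot_\mathbb{Q}(L_i)$ is defined in $(S^3_2(L),\xi_L)$ and changes by $\pm1$ under a stabilization. Given two distinct stabilization choices, some $\rot_\mathbb{Q}(L_i)$ differs, and the formula $\langle c_1(J),[\Sigma_i\cup 2\cdot C_i]\rangle = 2\rot_\mathbb{Q}(L_i)$ of \cite[Lemma~4.1]{LW:rational} shows the two induced Stein structures are non-isomorphic; by \cite[Theorem~2]{Plamenevskaya:cobordism} the two contact structures on $M(n,r)$ are then non-isotopic and are distinguished by their Heegaard Floer invariants.

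It remains to count the stabilization choices. The positive $(n-1)$-surgery converts to a $(+1)$-surgery together with a contact $(-\frac{n-1}{n-2})$-surgery on a push-off; since $-\frac{n-1}{n-2}=[\,\underbrace{-2,\dots,-2}_{n-2}\,]$, Theorem~\ref{contact surgery} yields $|(-2)(-1)^{\,n-3}|=2$ choices there (equivalently $\Psi(n)=2$ for the integer $n>2$). The negative $-\frac1{r-1}$-surgery contributes, again by Theorem~\ref{contact surgery} together with the periodicity $\Phi(r-1)=\Phi(r)$, exactly $\Phi(r)$ choices, in parallel with the computation $\Phi(\tfrac{r-2}{r-1})=\Psi(r)$ in Proposition~\ref{prop:surgeryPsi}. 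As the two sets of choices live on different link components they are independent, giving $2\Phi(r)$ in total. For $r=1$ the coefficient $-\frac1{r-1}$ becomes $\infty$, a trivial filling, leaving only the trefoil with its $2=2\Phi(1)$ choices, so the count is uniform for all $r\geq1$.

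The main obstacle is not conceptual---the paper already advertises that this proof mirrors Proposition~\ref{prop:surgeryPsi}---but lies in the bookkeeping that makes the count exactly $2\Phi(r)$ and in verifying that all $2\Phi(r)$ combinations are pairwise distinct. Unlike the $\Psi$-diagram, which had a single source of stabilizations, here the choices come from two surgeries at once, so one must check that every pair of distinct combinations is separated by some $\rot_\mathbb{Q}(L_i)$. This is where the rotation-number computation does the real work: the push-offs relevant to the two surgeries are distinct components, so their rational rotation numbers are independent coordinates of $c_1(J)$ and no cancellation between the two sources can occur. Confirming the clean factorization $2\cdot\Phi(r)$ and the $r=1$ boundary case then completes the argument.
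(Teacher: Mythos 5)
Your proposal is correct and follows essentially the same route as the paper: identify the single $(+1)$-component after conversion, distinguish stabilization choices via rational rotation numbers and the first Chern class of the induced Stein cobordism, and count $2$ choices from the contact $(n-1)$-surgery times $\Phi(r-1)=\Phi(r)$ choices from the contact $(-\frac{1}{r-1})$-surgery. The only additions beyond the paper's proof are your explicit treatment of the $r=1$ case and the remark that the two sources of stabilization live on distinct components, both of which are consistent with the paper's argument.
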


\begin{proof}
	Similar to the proof of Proposition~\ref{prop:surgerytightPsi}, there exists a single link component $L$ with the contact surgery coefficient $(+1)$ after the conversion, and all other components have the contact surgery coefficient $(-1)$. Thus there exists a Stein cobordism $W$ from $S^3_2(L)$ to $M(r)$ induced from the surgery diagram. As in the proof of Proposition~\ref{prop:surgeryPsi}, different choices of stabilizations give rise to non-isomorphic Stein structures on $W$ and hence to non-isotopic contact structures on $M(n,r)$.   
	
	By Proposition~\ref{prop:surgerytightPhi}, all contact structures are tight for any choices of stabilizations. Thus we are only left to show that there exist $2\Phi(r)$ different choices of stabilizations for the contact surgery diagram in Figure~\ref{fig:legendrian2}. According to the algorithm in Section~\ref{sec:contactsurgery}, there exists two choices of stabilizations for a contact $(n-1)$-surgery. For a contact $(-\frac{1}{r-1})$-surgery, consider the negative continued fraction expansion of $-\frac{1}{r-1} = [r_0,r_1,...,r_k]$. Then the number of stabilization choices for a contact $(-\frac{1}{r-1})$-surgery is $|r_0(r_1+1) \cdots (r_k+1)|$, which is $\Phi({r-1}) = \Phi(r)$.
\end{proof}
      
\subsection{Distinguishing contact structures counted by \texorpdfstring{$\Psi$}{Psi} and \texorpdfstring{$\Phi$}{Phi}} 
So far, we have constructed tight contact structures on $M(n,r)$ using contact surgery diagrams. However, since we have used different surgery diagrams for the ones counted by $\Psi$ and $\Phi$, respectively, we constructed two distinct manifolds diffeomorphic to $M(n,r)$. To compare the isotopy classes of the contact structures on each manifold, we push them to a fixed manifold and determine their first Chern classes. 

\begin{figure}[htbp]
	\begin{center}
		\begin{overpic}[tics=20]{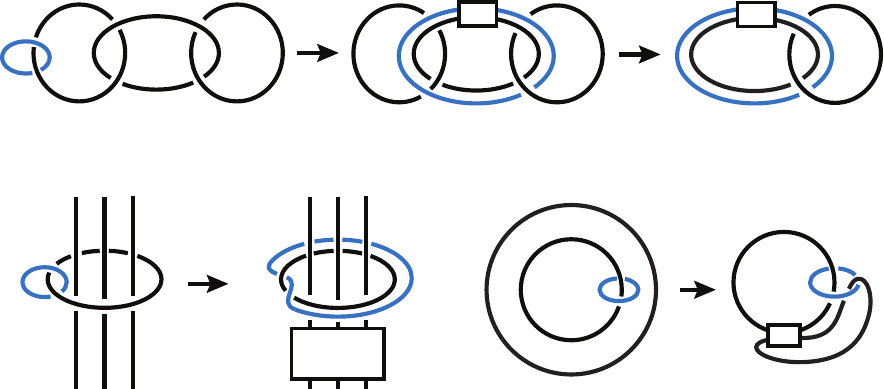}
			\put(30,188){$m$}
			\put(68,182){$n$}	
			\put(125,186){$k$}	

			\put(185,188){$m$}
			\put(227,149){$n$}	
			\put(227,178){$n$}	
			\put(282,186){$k$}	

			\put(348,152){$n-\frac1m$}	
			\put(361,178){$n$}	
			\put(416,186){$k$}	
			\put(158,14){ $1$}	

			\put(251,88){$m$}	
			\put(406,10){$m+n$}	
			\put(260,74){$n$}	
			\put(356,74){$n$}	
			\put(375,23){\small $n$}	
		\end{overpic}
		\caption{Various surgery operations.}
		\label{fig:surgeryOperations}
	\end{center}
\end{figure}

\begin{proposition}\label{prop:lowerbounds}
	For $n>2$ and $r \geq 1$, $M(n,r)$ supports at least $\Psi(r) + 2\Phi(r)$ pairwise non-isotopic tight contact structures, distinguished by their Heegaard Floer invariants.
\end{proposition}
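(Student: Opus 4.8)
The plan is to combine the two families of tight contact structures already constructed---the $\Psi(r)$ structures from Figure~\ref{fig:legendrian1} (Proposition~\ref{prop:surgeryPsi}) and the $2\Phi(r)$ structures from Figure~\ref{fig:legendrian2} (Proposition~\ref{prop:surgeryPhi})---into a single count on a common model of $M(n,r)$. Each family is already known to be internally pairwise non-isotopic and distinguished by Heegaard Floer contact invariants. The only thing that remains is to show that \emph{no} structure from the first family is isotopic to \emph{any} structure from the second, which would then yield $\Psi(r)+2\Phi(r)$ distinct classes in total. Since the two families were produced from two different surgery presentations (via switching the Whitehead link components and performing Rolfsen twists on different components), I first need to identify both presentations with one fixed diffeomorphic copy of $M(n,r)$, and then compare the resulting contact structures there.

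First I would fix a reference surgery diagram for $M(n,r)$---for concreteness the chain-of-unknots presentation underlying Figure~\ref{fig:legendrian1}---and realize the second family's presentation (Figure~\ref{fig:legendrian2}) inside it by tracking the sequence of Rolfsen twists, slam-dunks, and component switches through the surgery calculus recorded schematically in Figure~\ref{fig:surgeryOperations}. This produces an explicit diffeomorphism taking the underlying $4$-manifold handlebodies into a common $4$-manifold $X$ bounding $M(n,r)$, under which every contact structure in both families is realized by a Stein/Weinstein handle attachment. Each such structure carries a $\mathrm{Spin}^c$ (equivalently Stein) structure $J$ on $X$, and its Heegaard Floer contact invariant is determined by $\spinc(\xi)=\spinc(J)|_{M(n,r)}$.

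The key computation is then to evaluate the first Chern class $c_1(J)$ on a fixed basis of $H_2(X;\Z)$ for structures in each family, exactly as in the proofs of Propositions~\ref{prop:surgeryPsi} and~\ref{prop:surgeryPhi}, where the relevant pairings equal twice the (rational) rotation numbers of the push-off components. Within the $\Psi$-family these pairings vary over a range determined by the continued fraction expansion of $-\tfrac{1}{r-1}$ coming from the trefoil component, while within the $\Phi$-family they vary over a range governed by $-\tfrac{1}{r-1}$ together with the sign choice on the $(n-1)$-surgery component. The goal is to show these two sets of $\spinc$-structures (restricted to the boundary) are disjoint: two contact structures with distinct restricted $\spinc$-structures have distinct contact invariants and hence are non-isotopic. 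I would either exhibit an explicit homological quantity---say the divisibility or the value of $c_1$ on a distinguished generator such as the core of the $r$-surgery handle---that takes disjoint value sets on the two families, or rule out coincidences by a direct arithmetic comparison of the two continued-fraction-indexed families.

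The main obstacle I anticipate is precisely this last disjointness check: the two families share the common factor $\Phi(r)$ (both ranges involve the continued fraction of $-\tfrac{1}{r-1}$), so there is a genuine risk of accidental $\spinc$-coincidences, and the comparison cannot be purely formal. I expect the resolution to hinge on an invariant that sees the difference between the two presentations---most naturally the pairing of $c_1(J)$ with the homology class carried by the $r$-framed (resp.\ $n$-framed) component, which enters asymmetrically in the two diagrams. Verifying that this pairing separates every $\Psi$-structure from every $\Phi$-structure, uniformly in $n>2$ and $r\geq 1$, is the computational heart of the argument; once it is in place, the total count $\Psi(r)+2\Phi(r)$ follows immediately from the two earlier propositions.
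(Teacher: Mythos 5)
Your overall strategy is the same as the paper's: fix a reference copy of $M(n,r)$, push both families of contact structures onto it via the diffeomorphisms coming from the surgery calculus, and separate them by first Chern classes. The one step you leave open---the ``disjointness check'' you flag as the computational heart---is precisely what the paper's proof supplies, and it turns out to be much cleaner than you anticipate. There is no need to compare two continued-fraction-indexed ranges of $\spinc$-structures, because the separating quantity is \emph{constant} on each family. Writing $H_1(M(n,r)) = \Z_n \oplus \Z_p$ with generators the meridians $\mu$ of the $n$-framed component and $\nu$ of the $r$-framed component, one tracks $\PD(c_1)$ through the slam-dunks, handle slides, and Rolfsen twists and finds: for every $\Psi$-structure the $[\mu]$-coefficient is $0$, because the chain of unknots coming from the $-\frac{n+1}{n}$ continued fraction consists of Legendrian unknots with $\tb=-1$ and $\rot=0$, so all the rotation numbers feeding into the $[\mu]$-direction vanish; for every $\Phi$-structure the $[\mu]$-coefficient is $\pm(n-2)$, because the contact $(n-1)$-surgery on the trefoil produces $n-2$ push-offs each stabilized once (hence each with $\rot=\pm1$), and each of their meridians is carried to $[\mu]$ under the identification. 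Since $n>2$, we have $\pm(n-2)\not\equiv 0 \pmod n$, so no $\Psi$-structure can share a first Chern class with any $\Phi$-structure, regardless of the stabilization choices within either family. Your instinct that the asymmetry lives in the pairing with the class of the $n$-framed component is exactly right; you just need to carry out the homology bookkeeping (the content of the paper's Figure~\ref{fig:surgeryOperations} computations) to see that the two value sets are $\{0\}$ and $\{\pm(n-2)\}$ rather than overlapping ranges.
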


\begin{proof}
	In the previous subsections, we denoted by $M(n,r)$ any 3-manifolds constructed from the surgery diagrams in Figure~\ref{fig:Weeks1},~\ref{fig:slamdunk1},~\ref{fig:legendrian1},~\ref{fig:Weeks2}, and~\ref{fig:legendrian2}, but here we will distinguish them. Let $M(n,r)$ be the 3-manifold constructed from the surgery diagram shown in Figure~\ref{fig:Weeks1}, and $M_1$ a manifold constructed from the $(\pm1)$--contact surgery diagram which is converted from Figure~\ref{fig:legendrian1} using the algorithm in Section~\ref{sec:contactsurgery}, and $M_2$ a manifold constructed from the $(\pm1)$--contact surgery diagram converted from Figure~\ref{fig:legendrian2}. It is routine to check that we can return back from these $(\pm1)$--contact surgery diagrams to the surgery diagram in Figure~\ref{fig:Weeks1} by using isotopies (handle slides), slam--dunk moves and Rolfsen twists. Since each of these operations induces a diffeomorphism, we obtain diffeomorphisms $f^i\colon M_i \to M(n,r)$ for $i=1,2$.  

	Pick contact structures $(M_1, \xi_1)$ and $(M_2, \xi_2)$ counted by $\Psi$ and $\Phi$, respectively. We claim that the contact structures $f^1_*(\xi_1)$ and $f^2_*(\xi_2)$ on $M(n,r)$ are not isotopic. Together with Proposition~\ref{prop:surgeryPsi} and \ref{prop:surgeryPhi}, the claim completes the proof.
	
	We will show the claim by calculating the first Chern classes of the contact structures. We start with calculating $H_1(M(n,r))$. Let $\mu$ and $\nu$ be the meridians of $K_1$ and $K_2$ in Figure~\ref{fig:Weeks1}, respectively. Since the linking number between the components of the Whitehead link is 0, we have 
	\[
		H_1(M(n,r)) = \langle \mu, \nu \mid n\mu =0,  p \nu= 0 \rangle = \Z_n \oplus Z_p.
	\]
	where $r=\frac{p}{q}$.
	Next, we will compute the Poincar\'e dual of $c_1(f^1_*(\xi_1))$ and express it in terms of $[\mu]$ and $[\nu]$. Consider the contact $(\pm1)$--surgery diagram for $(M_1,\xi_1)$. Let $-\frac{r-1}{r-2}= [r_1,\ldots,r_k]$, $L_0$ the Legendrian trefoil in Figure~\ref{fig:legendrian1}, and $L_1,...,L_k$ its push-offs after the conversion.
	We also denote the Legendrian unknots in Figure~\ref{fig:legendrian1} by $L'_1, L'_2, \cdots, L'_{n-1}$ from the bottom to the top. According to the results from \cite{DGS:surgery, Gompf:stein}, we have 
	\begin{align}\label{eq:c1xi1}
		\PD(c_1(\xi_1))=\sum_{i=0}^k \rot(L_i)[\mu_i] + \sum_{i=1}^{n-1}\rot(L'_i)[\mu'_i], 
	\end{align}
	where $\rot(L'_i) = 0$ for $1 \leq i \leq n-1$, $\rot(L_0)=0$ and $\rot(L_i)$ for $1 \leq i \leq k$ depends on the choice of stabilizations. We now keep track of this homology class under the diffeomorphisms induced by surgery operations. 

	First, consider the chain of unknots $L'_1, \ldots, L'_{n-1}$. We apply slam-dunk moves to remove all $L'_i$ for $2 \leq i \leq n-1$. Suppose $L'_i$ is the last component of the chain. We investigate the effect of a slam-dunk move to $L'_i$. We claim that the homology classes will change under the induced diffeomorphism as follows: 
	\begin{align*}
		[\mu'_i] &\mapsto -2[\mu'_{i-1}] - [\mu'_{i-2}]
	\end{align*}	
	and all other $[\mu_j]$ and $[\mu'_{j}]$ remained unchanged. To see this, slide $\mu'_{i}$ over $L'_{i-1}$ and obtain a $(1,-2)$-cable of $L'_{i-1}$ as shown in the top of Figure~\ref{fig:surgeryOperations}. Thus we have $[\mu'_{i}] = -2[\mu'_{i-1}] + [\lambda'_{i-1}]$ where $\lambda'_{i-1}$ is a Seifert longitude of $L'_{i-1}$. Then apply the slam-dunk move to $L'_{i}$ and remove it.  Since the slam-dunk move is a local operation, it only affects a small neighborhood of $D'_{i} \cup L'_{i-1}$ where $D'_{i}$ is a Seifert disk of $L'_{i}$. Thus it does not change the isotoped $\mu'_{i}$. Also, $\lambda'_{i-1}$ is isotopic to $-\mu'_{i-2}$ since $\mathrm{lk}(L_i, L_{i-1}) = \tb(L_{i-1}) = -1$. Thus the result follows. 

	Now we turn our attention to $L_1,\ldots,L_k$. First, notice that the smooth surgery coefficient of $L_0$ is $2 = \tb(L_0) + 1$, and all other $L_i$ is $\tb(L_i) - 1$. We slide $L_k$ over $L_{k-1}$ and $L_k$ becomes a meridian of $L_{k-1}$. Since $\mathrm{lk}(L_{k},L_{k-1})=\tb(L_{k-1})$, the surgery coefficient becomes 
	\begin{align*}
		(\tb(L_k)-1) + (\tb(L_{k-1})-1) - 2\tb(L_{k-1}) &= \tb(L_k) - \tb(L_{k-1}) - 2 \\
		&= r_k.
	\end{align*}
	The second equality follows from the fact that both $|r_k + 2|$ and $|\tb(L_k) - \tb(L_{k-1})|$ are the difference between the number of stabilizations on $L_k$ and $L_{k-1}$. We keep sliding $L_i$ over $L_{i-1}$ for all $1 \leq i \leq k$ consecutively. Then we obtain a chain of unknots where $\mathrm{lk}(L_i,L_{i-1}) = -1$ for $1 \leq i \leq k$. Notice that the surgery coefficient of $L_i$ is
	\begin{align*}
	\begin{cases}
		\tb(L_i) - \tb(L_{i-1}) - 2 = r_{i} & 2 \leq i \leq k,\\ 
		\tb(L_i)-\tb(L_{i-1}) = r_i+1 & i=1,\\
		\tb(L_i) + 1 = 2  & i =0.
	\end{cases}
	\end{align*}	
	We can now perform a sequence of slam-dunk moves to eliminate all $L_i$ for $1 \leq i \leq k$. Suppose $L_i$ is the last component of the chain. As what we did to $L'_i$ in the previous paragraph, we can keep track of the homology classes under the slam-dunk moves on $L_i$ and the homology classes change as follows:
	\begin{align*}
	\begin{cases}
  	[\mu_{i}] \mapsto r_{i-1}[\mu_{i-1}] - [\mu_{i-2}] &  3 \leq i \leq k\\
  	[\mu_{i}] \mapsto (r_{i-1} + 1)[\mu_{i-1}] - [\mu_{i-2}] &  i=2\\
  	[\mu_{i}] \mapsto 2[\mu_{i-1}] &  i=1
	\end{cases}
	\end{align*}
	and all other $[\mu_j]$ and $[\mu'_j]$ remain unchanged.
	
	Finally, we apply a right-handed Rolfsen twist along $L'_1$ and obtain the surgery diagram in Figure~\ref{fig:Weeks1}. As shown in the bottom left of Figure~\ref{fig:surgeryOperations}, the meridian $\mu'_1$ becomes $(1,1)$-cable of $L'_1$ after the Rolfsen twist. Let $\lambda'_1$ be a Seifert longitude of $L'_1$. Since the linking number between the two link components is $0$, the homology classes will change under the induced diffeomorphism as follows: 
	\begin{align*}
		[\mu'_1] &\mapsto [\mu'_1] + [\lambda'_1] = [\mu'_1] + [\mu_0] - [\mu_0]  = [\mu'_1],
	\end{align*}
	and $[\mu_0]$ remains unchanged.

	We can now naturally identify $\mu_0$ and $\mu'_1$ with $\nu$ and $\mu$, respectively. Altogether, we have 
 	\begin{align*}
		\PD(c_1(f^1_*(\xi_1))) &= f^1_*(\PD(c_1(\xi_1))\\
		&= f^1_*\left(\sum_{i=0}^k \rot(L_i)[\mu_i] + \sum_{i=1}^{n-1}\rot(L'_i)[\mu'_i]\right)\\
		&= f^1_*\left(\sum_{i=0}^k \rot(L_i)[\mu_i]\right)\\
		&= \sum_{i=0}^k \rot(L_i)\cdot f^1_*\left([\mu_i]\right)\\
		&= 0[\mu] + m[\nu]
 	\end{align*}
	for some $m\in \mathbb{Z}$. The first and fourth equalities follow from the naturality, the second equality follows from (\ref{eq:c1xi1}), the third equality follows from the fact that $\rot(L'_i)=0$ for $1 \leq i \leq n-1$ and the fifth equality follows from the discussions about the effect of surgery operations in the previous paragraphs.
 
	Similarly, we can calculate $c_1(f^2_*(\xi_2))$. Consider the contact $(\pm1)$--surgery diagram for $(M_2,\xi_2)$. Let $L_1$ be the Legendrian trefoil in Figure~\ref{fig:legendrian2}. Since we have 
	\begin{align*}
		\frac{n-1}{1-(n-1)} = -\frac{n-1}{n-2}= [\overbrace{-2,\cdots, -2}^{n-2}], 
	\end{align*}
	there are $(n-2)$ push-offs of $L_1$ after the conversion. Denote them by $L_2,L_3,\cdots,L_{n-1}$. Recall that during the conversion, we stabilize $L_2$ once and push it off $n-3$ times. Thus the rotation numbers of $L_i$ are
	\begin{align}\label{eq:rot}
		\rot(L_1) = 0, \rot(L_2) = \rot(L_3)= \cdots =\rot(L_{n-1}) = \pm1.
	\end{align}
 	Let $L'_0,\ldots, L'_k$ be the Legendrian unknot in Figure~\ref{fig:legendrian2} and its push-offs after the conversion.  As before, we have  
	\begin{align}\label{eq:c1xi2}
		\PD(c_1(\xi_2))=\sum_{i=1}^{n-1} \rot(L_i)[\mu_i] + \sum_{i=0}^{k}\rot(L'_i)[\mu'_i].
	\end{align}

	We first consider $L_1,\ldots,L_{n-1}$. We start by sliding all $L_i$ over $L_1$ for $2 \leq i \leq n-1$. This produces an unlink of $(n-2)$ components which are meridians of $L_1$ and the surgery coefficient of each component becomes $-1$. The handle-slide operation on $L_i$ induces a diffeomorphism and the homology classes will change under this diffeomorphism as follows:
	\[
		[\mu_1] \mapsto [\mu_1] + [\mu_i]
	\]	
	and all other $[\mu_j]$ and $[\mu'_j]$ remain unchanged. This is clear from the bottom right of Figure~\ref{fig:surgeryOperations}. Now apply right-handed Rolfsen twist along each $L_i$ for $2 \leq i \leq n-1$ and get rid of it. As shown in the bottom left of Figure~\ref{fig:surgeryOperations}, each $\mu_i$ becomes a $(1,1)$-cable of $L_i$. Let $\lambda_i$ be a Seifert longitude of $L_i$. Since $\lambda_i$ is a meridian of $L_1$ and $\mu_i$ becomes contractible, the homology classes will change under the induced diffeomorphism as follows:  
	\[
		[\mu_i] \mapsto [\mu_i] + [\lambda_i] = [\mu_1]
	\]
	and all other $[\mu_j]$ and $[\mu'_j]$ remain unchanged.

	We now perform a sequence of handle-slides to make $L'_0,\ldots,L'_k$ a linear chain of unknots and then perform slam-dunk moves to get rid of all $L'_i$ for $1 \leq i \leq k$. Thus we apply a similar argument as before to keep track of the homology classes. The only important fact here is that these operations only affect $\mu'_i$ and do not affect any $\mu_i$. 	

	Finally, we apply a right-handed Rolfsen twist along $L'_0$ and obtain the surgery diagram in the left of Figure~\ref{fig:Weeks2}. Then swap the link components and obtain the diagram in Figure~\ref{fig:Weeks1}. As in the case of $\xi_1$, this Rolfsen twist does not affect any homology class.  

	We can now naturally identify $\mu_1$ and $\mu'_0$ with $\mu$ and $\nu$, respectively. Altogether, we have 
	\begin{align*}
	 \PD(c_1(f^2_*(\xi_2))) &= f^2_*(\PD(c_1(\xi_2))\\
	 &= f^2_*\left(\sum_{i=1}^{n-1} \rot(L_i)[\mu_i] + \sum_{i=0}^k\rot(L'_i)[\mu'_i]\right)\\
	 &= \sum_{i=2}^{n-1} \pm f^2_*\left([\mu_i]\right) + \sum_{i=0}^k \rot(L'_i)\cdot f^2_*\left([\mu'_i]\right)\\
	 &= \pm(n-2)[\mu] + h[\nu]
	\end{align*}
 for some $h \in \mathbb{Z}$. The first equality follows from the naturality, the second equality follows from (\ref{eq:c1xi2}), the third equality follows from (\ref{eq:rot}) and the fourth equality follows from the discussions about the effect of surgery operations in the previous paragraphs.

 Since we assume $n > 2$, we can conclude that $f^1_*(\xi_1)$ and $f^2_*(\xi_2)$ are distinguished by their first Chern classes, which completes the proof of the claim.
\end{proof}

\section{The upper bounds}\label{sec:upper-bounds}
In this section, we determine the upper bounds of the number tight contact structures on $M(n,r)$ for $n \geq 5$ and $r \in \mathcal{R}_+ \cup \mathbb{Q}_-$. Combining it with the lower bounds in Section~\ref{sec:lower-bounds}, we complete the proof of Theorem~\ref{thm:main}. 

\subsection{Convex decompositions of \texorpdfstring{$M(n,r)$}{M(n,r)}} \label{subsec:decomp} 
The first step is to decompose $M(n,r)$ into two pieces with convex torus boundary. Consider the surgery diagram for $M(n,r)$ shown in Figure~\ref{fig:Weeks1}. If we perform the surgery only on $K_1$, then the result will be $L(n,n-1)$. Moreover, it is well known ({\it c.f.}~\cite{HMW:fiber, Morimoto:fiber}) that $K_2$ is a genus one fibered knot in $L(n,n-1)$. Let $N$ be a neighborhood of $K_2$ in $L(n,n-1)$, $C_n$ the complement of $N$, $\Sigma$ a fiber surface of $C_n$ and $\phi_n$ the monodromy of $C_n$. Then there is a symplectic basis of $H_1(\Sigma)$ such that the action of the monodromy $\phi_n$ on $H_1(\Sigma)$ is given by 
\begin{align*}
	\phi_n = \matrixp{-n+2}{1}{-1}{0} =  \matrixp{1}{0}{-1}{1} \matrixp{1}{1}{0}{1} \matrixp{1}{0}{-1}{1}^{n-1}.
\end{align*}
We can also assume that $\phi_n$ is the identity near the boundary. Notice that $\phi_n$ is pseudo-Anosov for $n \geq 5$, and the fractional Dehn twist coefficient $c(\phi_n) \in (0,1)$ so $\phi_n$ is right-veering. Since $M(n,r)$ is a Dehn filling of $C_n$, we can write  
\[
	M(n,r) = C_n \cup N_r
\]
where $N_r$ is a solid torus with the meridional slope $r$, with respect to the coordinate system induced by $K_1$. We can also write $C_n$ as 
\[
	C_n = \Sigma \times [0,1] / (x,1) \sim (\phi_n(x),0).
\]

Let $\xi$ be a contact structure on $M(n,r)$ and $L$ a Legendrian knot in $N_r$ isotopic to the core of $N_r$. We can assume that $N_r$ is a standard neighborhood of $L$ after a perturbation, so $\partial N$ has convex boundary with two dividing curves of some slope $s$ (again, we measure slopes on $\partial N_r = -\partial C_n$ using the coordinates induced by $K_1$). We will denote $C_n(s)$ and $N_r(s)$ when $C_n$ and $N_r$ have contact structures with convex boundaries with two dividing curves of slope $s$.

As was done in \cite{CM:figure-eight}, we will \emph{thicken} $C_n(s)$ into a standard form, which means finding $C_n(s') \subset C_n(s)$ such that $C_n(s) \setminus C_n(s') = T^2 \times I$. We say that $C_n(s)$ \emph{thickens} to $C_n(s')$, or $C_n(s)$ \emph{thickens to slope $s'$}. Also, we say $C_n(s)$ \emph{does not thicken} if $C_n(s)$ thickens to $s'$ implies $s = s'$. We will thicken $C(s)$ by attaching bypasses to $-\bd C_n(s)$. We will find such bypasses by finding boundary parallel dividing arcs on a fiber surface $\Sigma \subset C_n(s)$. Due to our slope convention and the choice of orientation, the slope increases after attaching a bypass to $-\bd C_n(s)$. 

Let $\mathcal{S}(r)$ be the set of slopes $s$ that is clockwise of $r$, anticlockwise of $\infty$ on the Farey graph and connected with $r$ by an edge. For $r\in \mathcal{R}_+$, we need the following two lemmas.

\begin{lemma}\label{lem:thickeningPositive}
	Suppose $n \geq 5$, $r \in \mathcal{R}_+$ and $s \in \mathcal{S}(r)$. Then $C_n(s)$ thickens to $C_n(\infty)$.
\end{lemma}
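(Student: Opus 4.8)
The plan is to realize $C_n(\infty)$ inside $C_n(s)$ as the last term of a finite sequence of nested copies $C_n(s)=C_n(s_0)\supset C_n(s_1)\supset\cdots\supset C_n(s_N)=C_n(\infty)$, where each inclusion has product complement $C_n(s_i)\setminus C_n(s_{i+1})\cong T^2\times I$ and is produced by attaching a single bypass to $-\partial C_n(s_i)$. Since the orientation convention recorded above makes the dividing slope strictly increase under such an attachment, and since there are only finitely many Farey vertices on the arc from $s$ to $\infty$ (for $r\in\mathcal R_+$ the starting slopes $s\in\mathcal S(r)$ lie in the arc where this increase runs monotonically toward $\infty$), it suffices to prove the inductive step: whenever the current dividing slope $s'$ satisfies $s'\neq\infty$, there is a bypass for $-\partial C_n(s')$ advancing the slope one Farey step toward $\infty$. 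Throughout I work with the (tight) contact structure restricted from $M(n,r)$, so all the convex surfaces below sit in a tight manifold.

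First I would set up the convex fibre. Inside $C_n(s')$, take the fibre surface $\Sigma$, a once-punctured torus with $\chi(\Sigma)=-1$, and use Theorem~\ref{thm:LRP} to make it convex with Legendrian boundary $\partial\Sigma\subset\partial C_n(s')$. In our coordinates the fibre-boundary slope is the longitude $\infty$, so for $s'\neq\infty$ the geometric intersection $|\partial\Sigma\cap\Gamma_{\partial C_n(s')}|=2m$ is positive; by Theorem~\ref{thm:twisting} this is exactly $2$ times the Farey distance from $s'$ to $\infty$, and it counts the endpoints of the dividing arcs of $\Gamma_\Sigma$. Because the ambient structure is tight, Giroux's criterion forbids contractible closed dividing curves on $\Sigma$, so $\Gamma_\Sigma$ consists of $m$ properly embedded arcs together with boundary-parallel-free closed curves.

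Next I would locate a boundary-parallel dividing arc on $\Sigma$, which by definition cuts off a half-disk that serves as the bypass half-disk for $-\partial C_n(s')$. I would argue by the dichotomy: either some arc of $\Gamma_\Sigma$ is boundary-parallel — and then we have our bypass, attached along a slope-$\infty$ ruling curve — or every arc is essential. In the latter case the essential arcs of a once-punctured torus are mutually parallel, so $\Gamma_\Sigma$ carries a well-defined slope, and transporting this configuration around the mapping torus by the monodromy $\phi_n$ imposes a rigid consistency condition. Here is where the hypotheses enter: $\phi_n$ is pseudo-Anosov with fractional Dehn twist coefficient $c(\phi_n)\in(0,1)$, hence right-veering, and I would show that an all-essential-arc dividing set compatible with $\phi_n$ around $S^1$ either pins the dividing slope to $\infty$ (so $s'=\infty$, contradicting $s'\neq\infty$) or forces a Giroux torsion layer or an overtwisted disk, contradicting tightness. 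Granting the bypass, I then attach it to $-\partial C_n(s')$ along the slope-$\infty$ ruling; by Theorem~\ref{thm:bypass} and the slope-increase convention the new convex boundary has dividing slope equal to the Farey vertex adjacent to $s'$ one step closer to $\infty$, with product complement, completing the inductive step. Iterating exhausts the finite Farey arc and terminates at $C_n(\infty)$.

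The main obstacle is precisely the middle step — ruling out the purely-essential-arc configuration for every dividing slope strictly anticlockwise of $\infty$, equivalently guaranteeing a boundary-parallel arc on the fibre at each stage. This is the only place the dynamics of $\phi_n$ and the bound $c(\phi_n)\in(0,1)$ are genuinely used, and it is where I would adapt and generalize the Etnyre--Honda and Conway--Min analysis (originally for a single monodromy) to the whole family $\phi_n$; once it is in place, the remainder is bookkeeping with the Farey graph, edge-rounding, and the standard bypass-attachment moves.
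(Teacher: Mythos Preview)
Your overall plan---iterate bypass attachments along $-\partial C_n(s)$, each coming from a boundary-parallel dividing arc on the convex fibre $\Sigma$---is the paper's strategy. But there are two genuine gaps.

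First, a false claim: disjoint essential properly embedded arcs on a once-punctured torus are \emph{not} automatically parallel; up to three isotopy classes can coexist (and there may also be a closed essential dividing curve). This is exactly the configuration the paper's normalization (Proposition~\ref{prop:normalizationCn}) must address, so your dichotomy ``boundary-parallel or all arcs of a single slope'' is wrong, and the hand-wavy monodromy argument you build on it does not start. The paper instead cuts along $\Sigma$ to get a genus-two handlebody and uses explicit compressing disks and annuli, together with the imbalance principle, to locate the needed bypass.

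Second, and more importantly, you miss where the hypothesis $r\in\mathcal R_+=[2,4)\cup[5,\infty)$ actually enters. The paper's Proposition~\ref{prop:positiveThickening} produces a boundary-parallel arc only when $s\geq 2$ and $s\notin\{(4k+1)/k:k\in\mathbb N\}\subset(4,5]$; at those exceptional slopes the compressing-disk intersection count yields a nested dividing pattern on $D_\alpha$ with no usable bypass. The definition of $\mathcal R_+$ is engineered so that the initial $s\in\mathcal S(r)$ lies outside $(4,5]$, \emph{and} so that every subsequent slope along the Farey path toward $\infty$ also avoids it (integers in $[2,4]$ and in $[5,\infty)$ each have a direct Farey edge to $\infty$, so the iteration never enters the gap). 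Your proposal, by contrast, asserts the bypass exists at every intermediate slope between $s$ and $\infty$; that is stronger than what the paper proves and is precisely where the argument would break. Tracking this exclusion through the iteration is the actual content of the lemma, not bookkeeping.
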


Once we thicken $C_n(s)$ to $C_n(\infty)$, we will determine an upper bound for the number of tight contact structures on $C_n(\infty)$. 

\begin{lemma}\label{lem:tightInfty}
	Suppose there is no boundary parallel half Giroux torsion. Then there exist at most four tight contact structures on $C_n(\infty)$ up to isotopy (not fixing the boundary pointwise but preserving it setwise). Two of them thicken further to $C_n(1)$. 
\end{lemma}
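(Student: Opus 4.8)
The plan is to analyze a tight contact structure on $C_n(\infty)$ by making a fiber $\Sigma$ convex, reading off its dividing set, and then using the monodromy $\phi_n$ to constrain the possibilities. First I would isotope a fiber $\Sigma \subset C_n(\infty)$ to be convex with Legendrian boundary. Since the boundary torus $-\bd C_n(\infty)$ has dividing slope $\infty$ and $\bd\Sigma$ is a longitude meeting each dividing curve once, Theorem~\ref{thm:twisting} gives $|\Gamma_\Sigma \cap \bd\Sigma| = 2$, so $\tb(\bd\Sigma) = -1$. Tightness rules out homotopically trivial dividing curves, the hypothesis of no boundary-parallel half Giroux torsion rules out closed dividing curves parallel to $\bd\Sigma$, and any remaining essential closed dividing curve can be eliminated via the Legendrian realization principle (Theorem~\ref{thm:LRP}) together with an imbalance/disk-imbalance bypass argument (Theorems~\ref{thm:Imbalance} and~\ref{thm:DiskImbalance}) without raising $|\Gamma_\Sigma \cap \bd\Sigma|$. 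Hence $\Gamma_\Sigma$ may be taken to be a single essential arc, and essential arcs on the once-punctured torus are classified up to isotopy by a slope in $\Q \cup \{\infty\}$.

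The crux is to pin down which arc slopes can occur. Here I would exploit that $\phi_n$ is pseudo-Anosov with $c(\phi_n) \in (0,1)$, hence right-veering. Cutting $C_n$ along $\Sigma$ yields the product $\Sigma \times [0,1]$ with the identification $(x,1) \sim (\phi_n(x),0)$, so the dividing set on $\Sigma \times \{1\}$ is $\phi_n$ applied to the one on $\Sigma \times \{0\}$. Transporting the convex fiber once around the bundle and tracking the dividing arc through the sequence of bypass moves it undergoes, tightness forbids any move producing a homotopically trivial dividing curve, i.e.~an overtwisted disk. Translating $c(\phi_n) \in (0,1)$ into a combinatorial constraint on how far this sequence of slopes may travel should confine the admissible arc slope to a bounded set. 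This is the step I expect to be the \emph{main obstacle}: it is exactly the part of the Conway--Min argument that must be generalized from the figure-eight monodromy to an arbitrary pseudo-Anosov $\phi_n$ with $-1 < c(\phi_n) < 1$, and it requires both ruling out all but finitely many dividing configurations and excluding hidden Giroux torsion.

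With the fiber's dividing set normalized, I would cut $C_n(\infty)$ along $\Sigma$ and then along a convex vertical annulus, reducing the remaining piece to a ball, on which the tight contact structure is unique by Eliashberg. The count then comes from the binary choices made along the way: the sign of the relevant basic slice $B_\pm$ (equivalently a relative Euler class, or a bypass-sign choice) in the $T^2 \times I$ layers, together with the residual choice in the arc configuration. I expect exactly two independent binary choices, which yields the upper bound of four. Throughout I would use edge-rounding and the classification of basic slices to reassemble the pieces and to verify that distinct choices remain non-isotopic, invoking isotopy of the boundary (setwise) to identify any two configurations that differ only by a boundary diffeomorphism.

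Finally, for the claim that two of the four thicken to $C_n(1)$, I would determine which normalized dividing sets on $\Sigma$ carry a boundary-parallel dividing arc, as this is precisely what produces a bypass on $-\bd C_n(\infty)$. By Theorem~\ref{thm:bypass}, attaching such a bypass moves the boundary dividing slope from $\infty$ across the Farey edge to $1$, consistent with the convention that the slope increases under a bypass attachment to $-\bd C_n$. Thus the configurations admitting this boundary-parallel arc are exactly those that thicken to $C_n(1)$, and I would check that precisely two of the four do so, completing the proof.
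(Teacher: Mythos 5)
Your overall strategy (make a fiber $\Sigma$ convex, normalize $\Gamma_\Sigma$, cut the resulting genus-two handlebody along compressing disks down to a ball, and count by sign choices) is the same as the paper's, but there is a concrete gap in your normalization step. You assert that, after ruling out boundary-parallel closed curves via the no-half-Giroux-torsion hypothesis, ``any remaining essential closed dividing curve can be eliminated,'' reducing $\Gamma_\Sigma$ to a single essential arc classified by a slope. This is false: the paper's Proposition~\ref{prop:normalizationCn1} shows that one of the two surviving normal forms is an essential arc of slope $\infty$ \emph{together with one essential closed dividing curve of slope $\infty$ parallel to it}, and this configuration cannot be removed by the imbalance arguments (the relevant compressing disk meets the dividing set in too few points, and the leftover dividing pattern on the disk is exactly the nested one that yields no useful bypass). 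This configuration is not a nuisance to be discarded --- it accounts for two of the four tight contact structures in the count (the ones with relative Euler class $0$ on $\Sigma$), while the boundary-parallel-arc configuration accounts for the other two (relative Euler class $\pm 2$). Your ``two independent binary choices'' therefore never get correctly identified: the second binary choice is precisely the dichotomy between these two normal forms, not a ``residual choice'' of arc slope, and as written your argument would land on a count of two, not four. Relatedly, the slope-constraining step you flag as the main obstacle is resolved in the paper not by a fractional-Dehn-twist-coefficient bound but by an explicit annulus/bypass argument that drives the arc slope along the Farey graph to $\infty$ using the attracting fixed point of $\phi_n$.

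For the final claim, your criterion --- the structures that thicken to $C_n(1)$ are those whose normalized $\Gamma_\Sigma$ has a boundary-parallel arc --- identifies the right two, but the single bypass attachment you invoke is not how the paper establishes thickening. The paper first proves uniqueness of the tight structure on the handlebody for each sign (via compressing disks meeting the dividing set twice), then identifies $\xi^\pm_\infty$ with the complement of a standard neighborhood of a doubly stabilized Legendrian representative of $K_2$ by matching relative Euler classes, using Colin's gluing theorem for universally tight pieces along a rotative torus; thickening to $C_n(1)$ is then manifest from that model. You would need either that identification or a careful Farey-graph verification that the bypass coming from the boundary-parallel arc moves the boundary slope from $\infty$ all the way to $1$.
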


\begin{remark}\label{rmk:boudnary-twisting}
	Since we will classify tight contact structures on closed manifolds, which are obtained by gluing a solid torus to the boundary of $C_n(\infty)$, it is enough to classify tight contact structures on $C_n(\infty)$ up to isotopy fixing the boundary setwise.
\end{remark}

For $r \in \mathbb{Q}_-$, we need the following two lemmas.

\begin{lemma}\label{lem:thickeningNegative}
	Suppose $n \geq 5$, $r \in \mathbb{Q}_-$, $s \in \mathcal{S}(r)$ and $s\neq 0$. Then $C_n(s)$ thickens to $C_n(1)$.
\end{lemma}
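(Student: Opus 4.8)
\textbf{Proof proposal for Lemma~\ref{lem:thickeningNegative}.}

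The plan is to mirror the thickening strategy used for the positive case (Lemma~\ref{lem:thickeningPositive}), but now driving the boundary slope all the way up to $1$ rather than to $\infty$. The key mechanism is that attaching a bypass to $-\bd C_n(s)$ increases the dividing slope, where the new slope is located by Theorem~\ref{thm:bypass}, and such bypasses are produced by finding boundary-parallel dividing arcs on a fiber surface $\Sigma \subset C_n(s)$. First I would fix $s \in \mathcal{S}(r)$ with $s \neq 0$ and analyze the characteristic foliation and dividing set that $\xi$ induces on the fiber surface $\Sigma$. Since $\phi_n$ is pseudo-Anosov with fractional Dehn twist coefficient $c(\phi_n) \in (0,1)$ (hence $-1 < c(\phi_n) < 1$, so the general machinery of Section~\ref{sec:upper-bounds} applies), the monodromy forces the dividing set on $\Sigma$ to have controlled intersection with $\bd\Sigma$, and I expect to extract a boundary-parallel dividing arc cutting off a bypass half-disk. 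The Legendrian realization principle (Theorem~\ref{thm:LRP}) and the imbalance principle (Theorem~\ref{thm:Imbalance}) are the tools that guarantee these arcs exist and can be realized as honest bypasses on $-\bd C_n(s)$.

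Next I would track the slope evolution on the Farey graph. Because $r \in \Q_-$ and $s$ is clockwise of $r$ and anticlockwise of $\infty$ with an edge to $r$, the relevant slopes sit in the arc of the Farey graph between $r$ and $\infty$; each bypass attachment moves the dividing slope anticlockwise along this arc toward $1$ (note $1$ lies anticlockwise of every negative slope and clockwise of $\infty$). I would argue inductively that $C_n(s)$ thickens through a finite sequence of intermediate slopes, terminating at slope $1$. The stopping point is $1$ rather than $\infty$ precisely because, for $r < 0$, the slope $1$ is the natural ``standard form'' boundary slope dictated by the monodromy $\phi_n$: the fractional Dehn twist coefficient lying in $(0,1)$ means the fibered structure obstructs pushing the slope past $1$ into the $(1,\infty)$ arc without first passing through a controlled configuration. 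The condition $s \neq 0$ is needed because slope $0$ is degenerate in this setup (it corresponds to the longitude where the bypass-finding argument via boundary-parallel arcs breaks down, or where a different normal form is required), so it must be excluded from the hypothesis.

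The main obstacle will be establishing the existence of the boundary-parallel dividing arc on $\Sigma$ at each stage, i.e.\ ruling out the possibility that the dividing set on $\Sigma$ consists only of arcs that run from one boundary component to another (or that form closed curves bounding no useful bypass). This is where the pseudo-Anosov hypothesis and the bound on $c(\phi_n)$ do the real work: I would use the right-veering property (since $c(\phi_n) > 0$) together with a Euler-characteristic/intersection-number count on $\Sigma$ to force a boundary-parallel arc whenever the slope has not yet reached $1$. Concretely, if no such arc existed the dividing set would be ``too efficient,'' contradicting either tightness (via an overtwisted disk detected by Theorem~\ref{thm:twisting}) or the monodromy constraint. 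Once the arc is produced, the bypass exists by Theorem~\ref{thm:Imbalance} or Theorem~\ref{thm:DiskImbalance}, and Theorem~\ref{thm:bypass} pins down the new slope, so the induction proceeds.

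A secondary technical point I would need to handle carefully is the interaction with Giroux torsion: if $C_n(s)$ contained a boundary-parallel half Giroux torsion layer, the thickening could stall or the slope bookkeeping could fail. I would either invoke the no-half-torsion hypothesis carried through this section (as in Lemma~\ref{lem:tightInfty}) or show directly that such torsion cannot appear for $r \in \Q_-$ with $s \neq 0$, so that the thickening to $C_n(1)$ is unobstructed and the lemma follows.
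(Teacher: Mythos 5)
Your first step---repeatedly producing bypasses for $-\bd C_n(s)$ from boundary-parallel dividing arcs on the fiber surface $\Sigma$---matches the paper (this is Proposition~\ref{prop:negativeThickening}). But the core of your induction, ``each bypass attachment moves the dividing slope toward $1$, terminating at slope $1$,'' does not work, and this is a genuine gap. Every bypass you obtain this way is attached along a copy of $\bd\Sigma$, i.e.\ along a curve of slope $0$ on the boundary torus, so Theorem~\ref{thm:bypass} forces the new dividing slope to be the Farey vertex between the old slope and $0$ that is adjacent to the old slope. Starting from $s<0$ the sequence of slopes therefore increases toward $0$ and stalls at $-\frac1k$ (where $-\frac1{k-1} < s \le -\frac1k$): the next attachment would land on the fiber slope $0$ itself, where $\bd\Sigma$ can no longer be realized as a Legendrian curve with negative twisting and the whole fiber-surface argument breaks down. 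This is also why the hypothesis excludes $s=0$. No amount of fiber-surface bypasses gets you across $0$ to slope $1$, and the fractional Dehn twist coefficient does not rescue this; your claim that $c(\phi_n)\in(0,1)$ ``obstructs pushing the slope past $1$'' has the logic backwards---the problem is getting \emph{to} $1$, not past it.

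What the paper actually does after reaching $C_n(-\frac1k)$ is qualitatively different: it classifies the possible tight contact structures there rather than attaching more bypasses. Using Proposition~\ref{prop:normalizationCn1} it normalizes $\Gamma_\Sigma$ into two shapes (one boundary-parallel arc, or one arc plus one closed curve of slope $\infty$), cuts along $\Sigma$ to get a genus-two handlebody, and shows there are at most four candidate tight structures, indexed by the relative Euler class $\langle e(\xi),\Sigma\rangle \in \{0,\pm2\}$ and a sign. It then exhibits each candidate as an explicit model $B_{\pm}(-\tfrac1k,0)\cup B_{\mp}(0,1)\cup C_n(1)$ or $B_{-}(-\tfrac1k,0)\cup B_{-}(0,1)\cup C_n(1)$, built from the complements of stabilizations $S_{\pm}(S_{\pm}(L))$ of the Legendrian representative $L$ of $K_2$ with $\tb=1$, $\rot=0$ (Lemma~\ref{lem:K2leg}), glued via Colin's theorem along rotative tori. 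Matching relative Euler classes identifies each candidate with a model, and each model visibly contains $C_n(1)$, which is what ``thickens to $C_n(1)$'' means here. Your proposal contains no substitute for this model-matching step, so as written it establishes thickening only to $C_n(-\frac1k)$, not to $C_n(1)$. (Your worry about Giroux torsion is reasonable but secondary: a boundary-parallel torsion layer contains convex tori of every slope, so thickening is automatic in that case, which is why the lemma needs no torsion hypothesis.)
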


According to Lemma~\ref{lem:thickeningNegative}, we can write $C_n(s) = T \cup C_n(1)$ for $s< 0$, where $T = T^2 \times [0,1]$ with convex boundary and dividing slopes $s_0 = s$ and $s_1 = 1$.

\begin{lemma}\label{lem:tightReciprocal}
	Suppose there is no boundary parallel half Giroux torsion. Then there exist at most four tight contact structures on $C_n(-\frac1k)$ for $k \in \mathbb{N}$ up to isotopy (not fixing the boundary pointwise but preserving it setwise). Moreover, the (possibly) tight contact structures on $C_n(-\frac1k)$ are determined by their restriction to $T$.
\end{lemma}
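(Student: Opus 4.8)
The plan is to reduce the classification on $C_n(-\frac1k)$ to understanding the layer $T = T^2 \times [0,1]$ that is stripped off by Lemma~\ref{lem:thickeningNegative}, together with the already-established count on $C_n(1)$. First I would apply Lemma~\ref{lem:thickeningNegative} with $s = -\frac1k$: since $-\frac1k \in \mathbb{Q}_-$ lies in $\mathcal{S}(r)$ for the relevant $r$ and $-\frac1k \neq 0$, the complement $C_n(-\frac1k)$ thickens to $C_n(1)$, giving a decomposition $C_n(-\frac1k) = T \cup C_n(1)$ where $T$ has convex boundary with dividing slopes $s_0 = -\frac1k$ and $s_1 = 1$. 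The strategy is then to show that a tight contact structure on $C_n(-\frac1k)$ is determined by the pair consisting of (i) its restriction to $C_n(1)$ and (ii) the basic-slice data on $T$, and that in fact the restriction to $T$ alone pins everything down.

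The key structural step is the edge-path analysis of $T$ in the Farey graph. The slopes $-\frac1k$ and $1$ are joined by the edge-path passing through $0$ (and, for $k>1$, through the intermediate vertices $-\frac1j$), so $T$ decomposes into a sequence of basic slices. I would invoke Lemma~\ref{lem:tightInfty}: since $C_n(1)$ sits inside $C_n(\infty)$ among the contact structures that thicken further to $C_n(1)$, there are at most two tight contact structures on $C_n(1)$ (the two of the four in Lemma~\ref{lem:tightInfty} that thicken to slope $1$). Next I would show the basic slices making up $T$ cannot be chosen with independent signs: using the shuffling and continued-fraction relations for basic slices (Honda's gluing/classification, Theorem~\ref{thm:solid-torus} and the basic-slice classification), together with the no-half-Giroux-torsion hypothesis to rule out a $\pm$ adjacent pair forming torsion, the sign choices on the edge-path collapse so that $T$ contributes a bounded multiplicity. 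The arithmetic should yield that the combined count of $C_n(1)$ (at most two) times the admissible sign assignments on $T$ is at most four.

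The main obstacle I expect is establishing that the tight structure on $C_n(-\frac1k)$ is \emph{determined by its restriction to $T$}, i.e. that no information is lost by forgetting the $C_n(1)$ piece. The natural mechanism is to argue that once the restriction to $T$ is fixed, the gluing to $C_n(1)$ admits a unique tight extension: the boundary slope $1$ of $C_n(1)$ is exactly the slope to which these structures thicken by Lemma~\ref{lem:tightInfty}, so the two candidate extensions on $C_n(1)$ are already encoded by how the outermost basic slice of $T$ attaches (its sign and the position of its dividing slope in the Farey graph relative to the monodromy $\phi_n$). I would make this precise by a state-traversal argument along the fiber surface $\Sigma \subset C_n$: a boundary-parallel dividing arc realizing a bypass on $-\partial C_n(1)$ would thicken $C_n(1)$ further, contradicting the maximality built into Lemma~\ref{lem:tightInfty}, so the attaching data on $\partial C_n(1)$ propagates rigidly outward through $T$.

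Concretely, the proof would run: decompose via Lemma~\ref{lem:thickeningNegative}; bound $C_n(1)$ by at most two using Lemma~\ref{lem:tightInfty}; analyze the edge-path of $T$ into basic slices and bound the admissible sign configurations using the no-torsion hypothesis and the basic-slice relations; combine to get at most four; and finally show the restriction to $T$ determines the structure by ruling out additional bypasses on $\partial C_n(1)$, which would otherwise contradict Lemma~\ref{lem:tightInfty}. The hardest and most delicate part is this last rigidity claim, since it requires controlling how the pseudo-Anosov monodromy $\phi_n$ interacts with candidate bypasses across the gluing, which is precisely where the fractional Dehn twist coefficient condition $c(\phi_n) \in (0,1)$ must be used.
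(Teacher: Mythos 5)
Your proposal identifies the right skeleton (decompose via Lemma~\ref{lem:thickeningNegative}, control $T$ by basic slices, and prove that the $C_n(1)$ piece carries no independent information), but the two load-bearing steps are not established by the mechanisms you describe, and the paper proves the lemma by a genuinely different route.

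First, you extract ``at most two tight contact structures on $C_n(1)$'' from Lemma~\ref{lem:tightInfty}. That lemma bounds tight structures on $C_n(\infty)$ and says two of them thicken to $C_n(1)$; it gives no bound on $C_n(1)$ itself, and the paper never counts structures on $C_n(1)$ anywhere in this argument --- the entire point of the lemma is to avoid having to. Your arithmetic ``(at most two on $C_n(1)$) $\times$ (sign assignments on $T$) $\le 4$'' would in any case overcount unless the $C_n(1)$ factor is shown to collapse, which is exactly the claim at issue. Second, your proposed proof of the rigidity claim --- that extra bypasses on $\partial C_n(1)$ would contradict maximality of the thickening --- does not address the actual question, which is whether two non-isotopic tight structures on $C_n(1)$ can become isotopic (rel the boundary of $C_n(-\frac1k)$, setwise) after gluing the same $T$. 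The absence of further bypasses says nothing about this.

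The paper instead works directly on $C_n(-\frac1k)$: it normalizes the dividing set of the fiber surface $\Sigma$ via Proposition~\ref{prop:normalizationCn1} into two cases according to the relative Euler class ($\pm2$: one boundary-parallel arc; $0$: one arc plus one closed curve), cuts along $\Sigma$ to get a genus-$2$ handlebody, and classifies tight structures there by compressing-disk decompositions --- a unique one in the Euler class $\pm2$ case, and two (by Cofer's handlebody classification) in the Euler class $0$ case, one of which is discarded using the no-half-Giroux-torsion hypothesis. This yields the bound of four outright. The ``determined by $T$'' statement then follows because each of the four structures is explicitly realized as $B_{\pm}(-\frac1k,0)\cup B_{\pm}(0,1)\cup C_n(1)$ or $B_{\pm}(-\frac1k,0)\cup B_{\mp}(0,1)\cup C_n(1)$ (built from stabilizations $S_{\pm}S_{\pm}(L)$ of the Legendrian representative of $K_2$), and the four sign configurations on $T$ are distinguished by the relative Euler class on $\Sigma$ and the sign of the bypass. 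None of the inputs that make this work --- Proposition~\ref{prop:normalizationCn1}, the Euler-class case split, Cofer's result, and the explicit Legendrian realizations --- appear in your outline, so as written the proposal does not close.
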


\begin{remark}
	In fact, there are exactly three tight contact structures on $C_n(-1)$ without boundary parallel Giroux torsion but we will not use this fact in this paper.
\end{remark}

We also need to count the number of tight contact structures on $N_r(\infty)$ and $N_r(1)$ using Proposition~\ref{prop:solid-torus}.

\begin{lemma}\label{lem:solid-torus}
	For any $r \in \mathbb{Q}$, there are $\Phi(r)$ tight contact structures on $N_r(\infty)$ up to isotopy fixing the boundary pointwise (hence fixing the characteristic foliation).
	
	Also, there are $\Psi(r)$ tight contact structures on $N_r(1)$ up to isotopy fixing the boundary pointwise.
\end{lemma}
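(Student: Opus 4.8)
The plan is to deduce both statements directly from the classification of tight contact structures on solid tori, namely Theorem~\ref{thm:solid-torus} and Proposition~\ref{prop:solid-torus}, after identifying the solid tori $N_r(\infty)$ and $N_r(1)$ with the appropriate model solid tori. Recall that $N_r(s)$ is obtained by taking the solid torus $N(s)$ and performing $r$-surgery on its core, so that the resulting solid torus has dividing slope $s$ and meridional slope $r$. The key point is that the count of tight contact structures depends only on the meridional slope relative to the dividing slope, and these counts are encoded precisely by the functions $\Phi$ and $\Psi$.

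First I would treat $N_r(\infty)$. This is immediate from Proposition~\ref{prop:solid-torus}, which states that there are $\Phi(p/q)$ tight contact structures on $N_{p/q}(\infty)$ up to isotopy fixing the characteristic foliation on the boundary, for any relatively prime pair $(p,q)$. Writing $r = p/q$, this gives exactly $\Phi(r)$ tight contact structures, so the first half of the lemma requires only a citation of Proposition~\ref{prop:solid-torus}. The only bookkeeping is to note that ``fixing the boundary pointwise'' and ``fixing the characteristic foliation on the boundary'' yield the same count here, since a convex boundary with a fixed characteristic foliation is what the proposition already assumes.

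For the second half, $N_r(1)$, I would perform a change of coordinates on $\partial N_r$ so that the dividing slope $1$ becomes the slope $\infty$, thereby reducing to the case already understood. Concretely, apply an element of $SL_2(\Z)$ (a Rolfsen-twist type basis change) sending $1 \mapsto \infty$; under this transformation the meridional slope $r$ is sent to a new slope $r'$, and the count of tight contact structures on $N_r(1)$ equals $\Phi(r')$ by the previous case. The computation $r' = \frac{1}{1-r}$ follows from tracking the Möbius action of the chosen basis change on slopes; comparing with the definition $\Psi(r) = \Phi\!\left(\frac{1}{1-r}\right)$ for $r \neq 1$ then yields the count $\Psi(r)$. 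The edge case $r = 1$ must be handled separately: here $N_1(1)$ has coinciding meridional and dividing slope, which forces the meridian to bound a disk intersecting the dividing set in the minimal way, and one checks this corresponds to the degenerate value $\Psi(1) = 0$, consistent with the overtwisted phenomenon observed in Proposition~\ref{prop:surgeryPsi}.

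The main obstacle, and the only genuinely nonroutine step, is verifying that the change-of-coordinates slope $r'$ equals $\frac{1}{1-r}$ so that the count matches $\Psi(r)$ exactly as defined, rather than some other slope related to $r$ by a different element of the mapping class group of the torus. I would pin this down by choosing the specific basis change that fixes the longitude direction used in the surgery description and sends the dividing slope $1$ to $\infty$, then compute its action on $r$ explicitly; the self-consistency check is that the formula must reproduce $\Psi$ on the integer slopes in agreement with the values $\Psi(m) = |m-1|$ for $m < 0$ and $\Psi(m) = 2$ for $m > 2$ quoted before the first corollary, which I would use as a sanity verification of the computation.
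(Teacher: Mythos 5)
Your proposal is correct and follows essentially the same route as the paper: the first count is a direct citation of Proposition~\ref{prop:solid-torus}, and the second is obtained by the change of basis $\matrixs{0}{1}{-1}{1}$ on $\bd N_r(1)$ sending the dividing slope $1$ to $\infty$ and the meridional slope $r$ to $\frac{q}{q-p}=\frac{1}{1-r}$, whence the count is $\Phi\bigl(\frac{1}{1-r}\bigr)=\Psi(r)$. The paper simply excludes $r=1$ from the coordinate change; your observation that $N_1(1)$ admits no tight structure (consistent with $\Psi(1)=0$) is a harmless addition.
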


Combining these lemmas, we can obtain the upper bounds.

\begin{theorem}\label{thm:upperBounds} 
	Suppose $n \geq 5$ and $r\in \mathcal{R}_+ \cup \mathbb{Q}_-$. 
	\begin{itemize}
		\item $M(n,r)$ supports at most $2\Phi(r)+\Psi(r)$ tight contact structures up to isotopy if $r \in \mathcal{R}_+$.
		\item $M(n,r)$ supports at most $\Psi(r)$ tight contact structures up to isotopy if $r \in \mathbb{Q}_-$.
	\end{itemize}
\end{theorem}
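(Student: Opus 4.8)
The plan is to bound, for a tight contact structure $\xi$ on $M(n,r)$, the pieces of a convex decomposition adapted to the filling torus and then reassemble the count. Realize the core of $N_r$ as a Legendrian knot and let $N_r(s)$ be a standard neighborhood, so that $M(n,r) = C_n(s) \cup N_r(s)$ with the two pieces glued along a convex torus whose dividing curves have some slope $s \in \mathcal{S}(r)$. Every tight $\xi$ is recovered by gluing $\xi|_{C_n}$ to $\xi|_{N_r}$ along this torus, so the gluing map from decomposition data to tight structures is surjective; hence it suffices to bound the number of pairs (isotopy class on the complement, isotopy class on the solid torus) that occur once $C_n(s)$ has been thickened as far as possible. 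Before invoking Lemmas~\ref{lem:tightInfty} and~\ref{lem:tightReciprocal} I would check that $\xi|_{C_n}$ carries no boundary-parallel half Giroux torsion: such a layer abuts the gluing torus, and its union with $N_r$ would be a solid torus carrying Giroux torsion, hence overtwisted, contradicting tightness of $\xi$.

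For $r \in \mathcal{R}_+$, Lemma~\ref{lem:thickeningPositive} thickens $C_n(s)$ to $C_n(\infty)$, so $\xi$ decomposes as $C_n(\infty) \cup N_r(\infty)$. By Lemma~\ref{lem:tightInfty} there are at most four tight structures on $C_n(\infty)$, and two of them do not thicken further; pairing these with the $\Phi(r)$ tight structures on $N_r(\infty)$ from Lemma~\ref{lem:solid-torus} accounts for at most $2\Phi(r)$ isotopy classes. The other two structures thicken to $C_n(1)$, and the essential observation is that they are the two restrictions of a \emph{single} tight structure on $C_n(1)$ --- recovered from it by cutting along either of two parallel slope-$\infty$ convex tori differing by the sign of a basic slice. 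Thus on $C_n(1)$ only one class is relevant, and regluing that basic slice into the filling torus presents these $\xi$ as $C_n(1) \cup N_r(1)$; pairing the single $C_n(1)$-class with the $\Psi(r)$ tight structures on $N_r(1)$ (Lemma~\ref{lem:solid-torus}) contributes at most $\Psi(r)$ more. Since the two families are distinguished by their maximal thickening slope ($\infty$ versus $1$), they are disjoint, and summing gives $2\Phi(r) + \Psi(r)$.

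For $r \in \mathbb{Q}_-$, the maximal-twisting core produces a boundary slope of the form $s = -1/k$ with $k \in \mathbb{N}$, in particular $s \neq 0$, so Lemma~\ref{lem:thickeningNegative} thickens $C_n(s)$ to $C_n(1)$ and writes $C_n(-1/k) = T \cup C_n(1)$ with $T$ the collar running from slope $-1/k$ to slope $1$. By Lemma~\ref{lem:tightReciprocal} the tight structures on $C_n(-1/k)$ are determined by their restriction to $T$, so the induced structure on the inner piece $C_n(1)$ is forced; all genuine choices live in $T$, which is absorbed into the filling torus when we reglue $M(n,r) = C_n(1) \cup N_r(1)$. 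Hence $\xi$ is determined by the (unique) class on $C_n(1)$ together with one of the $\Psi(r)$ tight structures on $N_r(1)$, giving the bound $\Psi(r)$.

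The step I expect to be the main obstacle is the bookkeeping that makes the constants sharp rather than merely finite: the argument that the two thickening structures on $C_n(\infty)$ descend to a \emph{single} class on $C_n(1)$, so that the contribution is $\Psi(r)$ and not $2\Psi(r)$. Establishing that every tight $\xi$ does thicken into one of the listed standard decompositions (the surjectivity underlying the count), and ruling out boundary-parallel half Giroux torsion so that Lemmas~\ref{lem:tightInfty}--\ref{lem:tightReciprocal} apply, are the remaining points that must be in place; these I expect to be more routine but still essential.
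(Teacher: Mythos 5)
Your overall strategy coincides with the paper's: realize the core of $N_r$ as a Legendrian knot so that $M(n,r) = C_n(s) \cup N_r(s)$ for some $s \in \mathcal{S}(r)$, thicken $C_n(s)$ to a standard slope via Lemmas~\ref{lem:thickeningPositive} and~\ref{lem:thickeningNegative}, and combine the counts on the two pieces. Your preliminary check that a boundary-parallel half Giroux torsion layer in $C_n$ would sit inside a solid torus after regluing and force an overtwisted disk is a hypothesis-discharging step the paper leaves implicit, and it is good that you made it explicit. However, there is a genuine gap at exactly the step you flag as the main obstacle. To get the contribution $\Psi(r)$ rather than $2\Psi(r)$ when $r \in \mathcal{R}_+$, you assert that the two structures on $C_n(\infty)$ that thicken further are restrictions of a \emph{single} tight structure on $C_n(1)$. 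That uniqueness is never proved in the paper, and the paper is worded precisely so as not to need it: the proof of Lemma~\ref{lem:tightReciprocal} concludes that the relevant structures are ``independent of the tight contact structure on $C_n(1)$,'' not that there is only one. The paper's actual mechanism is to decompose $N_r(1) = N_r(-1) \cup T$ with $T = T^2 \times [0,1]$ running from slope $-1$ to slope $1$, and to apply Lemma~\ref{lem:tightReciprocal} to $C_n(-1) = T \cup C_n(1)$: whatever lives on $C_n(1)$, the structure on $T \cup C_n(1)$ is determined by its restriction to $T$, and $T$ is already accounted for among the $\Psi(r)$ structures on $N_r(1)$. You invoke exactly this mechanism in the negative case, so the repair is to use it in the positive case as well, in place of the unproven single-class claim (which, if false, would inflate your bound to $2\Psi(r) + 2\Phi(r)$).

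A second, smaller omission: for $r \in \mathbb{Q}_-$ you assert that the maximal-twisting core yields a boundary slope of the form $-1/k$, hence automatically $s \neq 0$. But $\mathcal{S}(r)$ can contain $0$ as well as slopes not of the form $-1/k$, and Lemma~\ref{lem:thickeningNegative} explicitly excludes $s = 0$. The paper handles this case by shrinking the solid torus to some $N_r(s')$ with $r < s' < 0$ and then thickening $C_n(s')$; some such step is needed before the lemma can be applied.
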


\begin{proof}
	We first consider the case $r \in \mathcal{R}_+$. By Lemma~\ref{lem:thickeningPositive} and Lemma~\ref{lem:tightInfty}, $C_n(s)$ thickens to slope $\infty$ or $1$ for any $s \in \mathcal{S}(r)$. Thus any tight contact structure on $M(n,r)$ can be decomposed as a union of a tight contact structure on $C_n(\infty)$ and a tight contact structure on $N_r(\infty)$, or a union of a tight contact structure on $C_n(1)$ and a tight contact structure on $N_r(1)$. In the case that $C_n(s)$ thickens to $C_n(\infty)$ and not further, there are at most two tight contact structures on $C_n(\infty)$ by Lemma~\ref{lem:tightInfty}. Also, there are $\Phi(r)$ tight contact structure on $N_r(\infty)$ by Lemma~\ref{lem:solid-torus}. These lead to $2\Phi(r)$ (possibly) tight contact structures on $M(n,r)$. In the case that $C_n(s)$ thickens to $C_n(1)$, we can decompose $N_r(1)$ into a union of $N_r(-1)$ and $T$, where $T = T^2 \times I$ with dividing slopes $-1$ and $1$. By Lemma~\ref{lem:tightReciprocal}, a tight contact structure on $T \cup C_n(1) = C_n(-1)$ is completely determined by $T$. Also, there are $\Psi(r)$ tight contact structure on $N_r(-1) \cup T = N_r(1)$ by Lemma~\ref{lem:solid-torus}. These lead to the $\Psi(r)$ (possibly) tight contact structures on $M(n,r)$.

	Next, we consider the case $r \in \mathbb{Q}_-$. By Lemma~\ref{lem:thickeningNegative}, $C_n(s)$ thickens to slope $1$ for any $s \in \mathcal{S}(r)$ and $s\neq 0$. If $s = 0$, there is $N_r(s') \subset N_r(0)$ for $r < s' < 0$, so we can decompose $M(n,r)$ into $N_r(s')$ and $C_n(s')$. By Lemma~\ref{lem:thickeningNegative}, we can thicken $C_n(s')$ to slope $1$. Thus any tight contact structure on $M(n,r)$ can be decomposed as a union of a tight contact structure on $C_n(1)$ and a tight contact structure on $N_r(1)$. We can decompose $N_r(1)$ into a union of $N_r(-\frac1m)$ and $T$, where $m \in \mathbb{N}$ and $T = T^2 \times I$ with dividing slopes $-\frac1m$ and $1$. By Lemma~\ref{lem:tightReciprocal}, a tight contact structure on $T \cup C_n(1) = C_n(-\frac1m)$ is completely determined by $T$. Also, there are $\Psi(r)$ tight contact structure on $N_r(-\frac1m) \cup T = N_r(1)$ by Lemma~\ref{lem:solid-torus}. These lead to the $\Psi(r)$ (possibly) tight contact structures on $M(n,r)$.
\end{proof}

We will prove the above lemmas in Section~\ref{subsec:infty}.

\subsection{Normalizing the dividing set on \texorpdfstring {$\Sigma$}{Sigma}}
We begin by normalizing the dividing set $\Gamma_{\Sigma}$ of $\Sigma$. Since we can freely modify the slopes of the Legendrian ruling curves on $N$, we assume $\Sigma$ is convex with Legendrian boundary. 

Etnyre and Honda \cite{EH:knots} normalized $\Gamma_{\Sigma}$ for any pseudo-Anosov monodromy. 

\begin{proposition}[Etnyre and Honda \cite{EH:knots}]\label{prop:normalization1}
	Suppose $\Gamma_{\Sigma}$ consists of $k > 0$ properly embedded arcs and $m$ closed curves. Then there is an isotopic copy of $\Sigma$ such that one of the following holds.
	\begin{enumerate}
		\item $k$ is odd.
		\begin{itemize}
			\item $m=1$ and all arcs are in the same isotopy class that is not boundary parallel. 
			\item $m=0$ and there are three distinct isotopy classes of arcs that are not boundary parallel. 
			\item there is a boundary parallel arc (possibly with other dividing curves).
		\end{itemize}
		\item $k$ is even.
		\begin{itemize}
			\item there is a boundary parallel arc (possibly with other dividing curves).
		\end{itemize}
	\end{enumerate}
\end{proposition}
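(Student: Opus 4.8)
The plan is to read the normal form off the combinatorics of arcs and curves on the once-punctured torus $\Sigma$, using tightness only through Giroux's criterion to control the closed components of $\Gamma_\Sigma$. First I would dispose of the closed curves: by Giroux's criterion no component of $\Gamma_\Sigma$ bounds a disk, so every closed component is essential, and since disjoint essential simple closed curves on $\Sigma$ are pairwise parallel (apart from boundary-parallel ones), the $m$ closed curves are isotopic to a single slope $\sigma$ together with possibly boundary-parallel copies. A boundary-parallel closed curve can be pushed to yield a boundary-parallel dividing arc after Legendrian realization, landing us in the boundary-parallel conclusion; so I may assume there are none.

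Next I would analyze the arcs. I would invoke the classification of properly embedded arcs on $\Sigma$: the non-boundary-parallel ones are indexed by slopes exactly as simple closed curves are, any disjoint family realizes at most three distinct slopes, and three distinct slopes occur precisely when the arcs are the edges of an ideal triangulation cutting $\Sigma$ into two triangles. This immediately gives the main case split: if some arc of $\Gamma_\Sigma$ is boundary-parallel we are already in the ``boundary-parallel arc'' conclusion (valid in either parity), so I assume all $k$ arcs are essential.

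To pin down the parity in the essential case I would use two bookkeeping facts. Cutting $\Sigma$ along the $k$ arcs gives $\chi(\overline{R_+})+\chi(\overline{R_-})=\chi(\Sigma)+k=k-1$, while tightness gives the convex-surface bound $|\chi(\overline{R_+})-\chi(\overline{R_-})|\le -\chi(\Sigma)=1$. Together these force balanced Euler characteristics $\chi(\overline{R_+})=\chi(\overline{R_-})=(k-1)/2$ when $k$ is odd, and an imbalance $\{\chi(\overline{R_+}),\chi(\overline{R_-})\}=\{k/2,\ k/2-1\}$ when $k$ is even. Moreover, since $\Gamma_\Sigma$ separates $\Sigma$ we have $[\Gamma_\Sigma]=0$ in $H_1(\Sigma,\bd\Sigma;\Z/2)$; because an essential arc and the parallel closed curve carry the same nonzero class and the three ideal-triangulation slopes sum to zero modulo $2$, this restricts the admissible multiset of slopes. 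For odd $k$ with no boundary-parallel arc I expect the balanced Euler characteristics, together with an efficiency normalization that removes empty bigons, to leave exactly the two rigid configurations: all arcs of one slope with a single parallel closed curve ($m=1$), or the three-slope ideal triangulation ($m=0$). For even $k$ the Euler-characteristic imbalance forces a region of positive Euler characteristic, hence (as tightness forbids a disk bounded only by closed curves) a disk region meeting $\bd\Sigma$, which I would try to convert into a boundary-parallel arc.

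The main obstacle is exactly this last step in the even case, and the dual issue of eliminating parallel essential arcs. A positive-Euler-characteristic region need not be a half-disk: it can be a $2j$-gon whose dividing sides are all essential—for instance the band between two parallel essential arcs, which separates $\Sigma$ with $k=2$ and exhibits no boundary-parallel arc at all. So ``disk region'' alone does not deliver the conclusion. Overcoming this requires removing such empty bigons and bands by an ambient isotopy of $\Sigma$ (supplying bypasses via Giroux flexibility and the imbalance principle applied to the complementary annulus), and the delicate point is reconciling these reductions with the standing requirement that $\bd\Sigma$ be Legendrian with negative twisting, so that $k\ge 1$ cannot be reduced away. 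I expect the closed-curve bookkeeping of the first step to be routine by comparison, with essentially all the real work concentrated in the even-parity reduction.
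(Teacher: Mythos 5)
This proposition is one the paper quotes from Etnyre--Honda rather than reproves, so the relevant comparison is with their argument and with the paper's closely parallel Propositions~\ref{prop:normalizationCn} and~\ref{prop:normalizationCn1}, which exhibit the intended technique. The central problem with your proposal is that you treat the statement as combinatorics of the multicurve $\Gamma_\Sigma$ on the abstract punctured torus, controlled only by Giroux's criterion and Euler-characteristic bookkeeping. But ``an isotopic copy of $\Sigma$'' means an isotopy of the convex surface inside the ambient fibered manifold, and the dividing set changes only when that isotopy crosses a bypass. Bypasses must be \emph{found}, and the only source of them here is the interaction between $\Gamma_{\Sigma\times\{1\}}$ and $\Gamma_{\Sigma\times\{0\}}=\phi(\Gamma_{\Sigma\times\{1\}})$ across compressing disks and vertical annuli in the cut-open handlebody $\Sigma\times[0,1]$, exploited via the Imbalance Principle (Theorem~\ref{thm:Imbalance}) and the fact that a pseudo-Anosov monodromy moves every slope. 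Your argument never invokes the monodromy, and without it the statement is false: an $I$-invariant neighborhood of a convex $\Sigma$ whose dividing set is, say, two parallel essential arcs is tight, satisfies Giroux's criterion, and admits no bypass reducing the dividing set (the paper itself appeals to the Thurston--Bennequin inequality for convex surfaces to make exactly this point), yet it lies in none of your normal forms. You flag the even-$k$ reduction as ``the main obstacle,'' but it is not a loose end to be smoothed over---it is the entire content of the proposition, and the same mechanism is also what removes the surplus parallel arcs and closed curves in the odd case (e.g.\ three arcs and three closed curves of one slope is separating, efficient, and balanced, yet is in no normal form).

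Two intermediate steps are also wrong as stated. The inequality $|\chi(\overline{R_+})-\chi(\overline{R_-})|\le-\chi(\Sigma)$ is not the Bennequin-type bound for convex surfaces with Legendrian boundary; the correct statement is $\tb(\partial\Sigma)+|\rot(\partial\Sigma)|\le-\chi(\Sigma)$ with $\rot(\partial\Sigma)=\chi(R_+)-\chi(R_-)$ and $\tb(\partial\Sigma)=-k$, which gives only $|\chi(R_+)-\chi(R_-)|\le k+1$ (realized by $k$ like-signed boundary-parallel arcs), so your parity dichotomy does not follow. And a boundary-parallel closed dividing curve cannot be ``pushed to yield a boundary-parallel arc'': configurations of essential arcs together with boundary-parallel closed curves are precisely the boundary-parallel Giroux torsion configurations, which Proposition~\ref{prop:normalizationCn1} eliminates only under an explicit no-half-torsion hypothesis and a careful disk-and-annulus bypass hunt. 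What survives from your write-up is the genuinely combinatorial part---at most three slopes of disjoint essential arcs, and the $H_1(\Sigma,\partial\Sigma;\Z/2)$ separation constraint forcing an odd number of parallel essential closed curves when $k$ is odd and all arcs share a slope---but that only constrains what $\Gamma_\Sigma$ can look like in the absence of a boundary-parallel arc; it does not produce the isotopy the proposition asserts.
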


When $k \geq 3$, Etnyre and Honda \cite{EH:knots} further normalized the dividing set on $\Sigma$ in the figure-eight knot complement. Their argument holds however for any pseudo-Anosov monodromy. 

Let $\phi$ be a pseudo-Anosov monodromy of $C = \Sigma \times [0,1] / (x,1) \sim (\phi(x),0)$. Consider the matrix representation of $\phi$ with respect to the symplectic basis of $\Sigma$. Then there are two eigenvectors $v_a$ and $v_r$, where $v_a$ is an attracting fixed point of $\phi$ and $v_r$ is a repelling fixed point of $\phi$. Denote by $s_a$ and $s_r$ the slopes of $v_a$ and $v_r$, respectively. There are two paths $P_c$ and $P_a$ in the Farey graph where $P_c$ is the clockwise path from $s_r$ to $s_a$ and $P_a$ is the anticlockwise path from $s_r$ to $s_a$.

\begin{lemma}[Etnyre and Honda \cite{EH:knots}]\label{lem:normalization2}
	Suppose $\Gamma_{\Sigma}$ consists of $k \geq 3$ properly embedded arcs and $m$ closed curves. Then there is an isotopic copy of $\Sigma$ such that one of the following holds.
	\begin{itemize}
		\item $m=0$ and there are three distinct isotopy classes of arcs with slopes $\{a,b,c\}$ in $P_c$. Moreover, $\{a,b,c\}$ forms the largest geodesic triangle in the Farey graph among geodesic triangles with vertices in $P_c$. 
		\item $m=0$ and there are three distinct isotopy classes of arcs with slopes $\{a,b,c\}$ in $P_a$. Moreover, $\{a,b,c\}$ forms the largest geodesic triangle in the Farey graph among geodesic triangles with vertices in $P_a$. 
		\item there is a boundary parallel arc (possibly with other dividing curves).
	\end{itemize}
\end{lemma}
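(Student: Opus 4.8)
The goal is to sharpen Proposition~\ref{prop:normalization1} in the case $k \geq 3$: when the normalized dividing set has no boundary-parallel arc, the three isotopy classes of non-boundary-parallel arcs must (i) have slopes all lying in one of the two Farey paths $P_c$ or $P_a$, and (ii) span the \emph{largest} geodesic triangle with vertices in that path. I would follow the strategy that Etnyre and Honda used for the figure-eight complement, verifying that nowhere does their argument use specifics of that monodromy beyond pseudo-Anosovness. The key structural input is that a pseudo-Anosov $\phi$ has exactly two real eigendirections, an attracting one $v_a$ and a repelling one $v_r$, and that $\phi$ acts on the circle of slopes $\partial\mathbb{D}$ as a ``north--south'' dynamics: slopes are pushed away from $s_r$ toward $s_a$. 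The two arcs $P_c, P_a$ of the Farey graph between $s_r$ and $s_a$ are precisely the two $\phi$-invariant regions of this dynamics.

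\textbf{Key steps.} First I would recall from Proposition~\ref{prop:normalization1} that after normalization with $m = 0$ we have exactly three isotopy classes of arcs with some slopes $\{a,b,c\}$, forming a geodesic triangle in the Farey graph; the remaining task is to pin down \emph{where} this triangle sits and how large it is. Second, I would use the gluing/monodromy structure of $C = \Sigma \times [0,1]/(x,1)\sim(\phi(x),0)$: the two copies of $\Sigma$ at the ends of the mapping torus are identified via $\phi$, so a dividing arc of slope $p$ on one copy must be compatible with a dividing arc of slope $\phi(p)$ on the other. This forces the collection of slopes to be essentially $\phi$-invariant, which is exactly what confines $\{a,b,c\}$ to one of the monodromy-invariant paths $P_c$ or $P_a$ rather than straddling $s_r$ or $s_a$. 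Third, for the maximality (``largest geodesic triangle''), I would argue by contradiction: if $\{a,b,c\}$ did not span the largest such triangle in its path, then applying $\phi$ (or attaching a bypass forced by an imbalance, via Theorem~\ref{thm:Imbalance}) would produce a boundary-parallel arc or reduce the arc count, contradicting the normal form. In other words, maximality is equivalent to the statement that no further bypass can be extracted from $\Sigma$ without creating a boundary-parallel arc, and the extremal triangle is the unique configuration stable under the induced dynamics.

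\textbf{Main obstacle.} The hardest part will be the maximality claim and the precise bookkeeping of how $\phi$ acts on geodesic triangles in the Farey graph. I would need to translate the north--south dynamics on $\partial\mathbb{D}$ into a statement about which triangle with vertices in $P_c$ (resp.~$P_a$) is ``largest'' and show this configuration is the only $\phi$-stable one not producing boundary-parallel arcs; establishing that any non-maximal triangle can be enlarged (equivalently, that a smaller triangle forces a bypass that either enlarges it or creates a boundary-parallel arc) is where the real content lies. The confinement to a single path $P_c$ or $P_a$ (step two) should follow more routinely from invariance, since a triangle with a vertex on the ``wrong'' side of $s_a$ or $s_r$ would be moved by $\phi$ into an incompatible position. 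I would keep careful track of orientation conventions for clockwise versus anticlockwise (as flagged in Remark~\ref{rmk:convention}) throughout, since the split into $P_c$ and $P_a$ depends on them, and I would invoke that the argument of Etnyre and Honda is insensitive to the particular pseudo-Anosov map, so that only the eigendata $s_a, s_r$ of $\phi$ enter the final statement.
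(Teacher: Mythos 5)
First, a point of comparison: the paper offers no proof of this lemma at all. It is imported verbatim from Etnyre--Honda \cite{EH:knots} with only the remark that their argument, written for the figure-eight complement, ``holds however for any pseudo-Anosov monodromy.'' So your proposal has to be judged as a reconstruction of the Etnyre--Honda argument rather than against an in-paper proof. At the level of ingredients it is pointed the right way: the north--south dynamics of $\phi$ on $\partial\mathbb{D}$, the two paths $P_c$ and $P_a$ determined by the eigendirections, and bypasses extracted via imbalance are indeed the relevant tools.

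The genuine gap is in your second step, which is also what your third step leans on. You assert that the identification $(x,1)\sim(\phi(x),0)$ ``forces the collection of slopes to be essentially $\phi$-invariant,'' and that this invariance confines $\{a,b,c\}$ to one of the two paths. There is no such constraint: the dividing set on $\Sigma_0$ is tautologically $\phi(\Gamma_1)$, and no finite set of rational slopes is preserved by a pseudo-Anosov map (its only fixed slopes are the irrational $s_a$ and $s_r$), so ``invariance'' cannot be the reason the triangle sits where it does. Likewise, ``applying $\phi$'' to a non-maximal triangle does nothing on its own --- it just produces an isotopic copy of $\Sigma$ with the conjugate configuration and cannot create a boundary-parallel arc. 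The actual mechanism, which this paper runs repeatedly in the proofs of Propositions~\ref{prop:positiveThickening}, \ref{prop:negativeThickening} and \ref{prop:normalizationCn1}, is to cut $C$ along $\Sigma$, choose a curve or arc $c$ whose geometric intersection numbers with $\Gamma_1$ and with $\Gamma_0=\phi(\Gamma_1)$ differ, and invoke Theorem~\ref{thm:Imbalance} or Theorem~\ref{thm:DiskImbalance} to produce a bypass whose attachment either creates a boundary-parallel arc or moves the slopes toward the extremal configuration; \emph{both} the confinement to a single path and the maximality fall out of iterating this, with the sign and direction conventions of Remark~\ref{rmk:convention} tracked at each stage. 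Since you defer exactly this bookkeeping to the end as ``the hardest part'' and supply no argument that a misplaced or non-maximal triangle always admits such a bypass, the proposal as written identifies the strategy but does not yet contain the proof.
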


Etnyre and Honda further convert these dividing sets into a simpler form.

\begin{lemma}[Etnyre and Honda \cite{EH:knots}]\label{lem:normalization3}
	Suppose that $\Gamma_{\Sigma}$ consists of $n$ arcs and there are three isotopy classes of arcs with slopes $\{a,b,c\}$ in $P_a$ (resp. $P_c$). Without loss of generality, assume that $a$ is the closest to $s_r$ in the Farey graph. Then there is another isotopic copy of $\Sigma$ such that 
	\begin{itemize}
		\item $\Gamma_{\Sigma}$ consists of one closed curve and $n$ arcs with slope $a$.
		\item there is a boundary parallel arc (possibly with other dividing curves).
	\end{itemize} 
\end{lemma}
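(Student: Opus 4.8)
The plan is to take the normalized dividing set guaranteed by Lemma~\ref{lem:normalization2}, which consists of $n$ arcs realizing three isotopy classes with slopes $\{a,b,c\}$ forming the largest geodesic triangle in either $P_a$ or $P_c$, and simplify it by using bypass moves to collapse two of the three slope classes down into a single slope, leaving only arcs of slope $a$ together with some closed curves. By symmetry it suffices to treat one of the two cases (say $P_a$), since the argument for $P_c$ is identical after reversing orientations. The key geometric input is that $a$, $b$, $c$ pairwise span edges of the Farey graph (they are vertices of a geodesic triangle), so arcs of these three slopes interleave on $\Sigma$ in a controlled way, and abutting arcs of different slopes create the configuration needed to locate bypasses.

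First I would examine pairs of adjacent parallel arcs of differing slopes along the boundary. The largest-triangle condition ensures that $a$ is closest to $s_r$ and that arcs of slope $b$ and $c$ sit on the ``far'' side; the idea is to find an embedded bypass arc along $\partial\Sigma$ (or a dividing arc that becomes boundary parallel after an isotopy) that lets one replace a slope-$b$ or slope-$c$ arc with a slope-$a$ arc. Concretely, when two arcs of slopes that are connected by a Farey edge share a region of $\Sigma$, one can construct a bypass on $\Sigma$ (inside the fiber, using the Legendrian realization principle, Theorem~\ref{thm:LRP}, to realize a suitable arc) whose attachment moves one slope to the vertex between them on the Farey graph. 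Since $\{a,b,c\}$ is a geodesic triangle, the relevant Farey-neighbor of the pair is again one of the three slopes, and iterating drives everything toward slope $a$. Each such move either reduces the number of distinct non-$a$ slopes or reduces the arc count of the non-$a$ classes, so the process terminates.

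At each stage I would check the dichotomy: either the process runs to completion, leaving $\Gamma_\Sigma$ with all $n$ arcs of slope $a$ plus exactly one closed curve (the parity/homology of the arc endpoints forces the leftover closed-curve count, exactly as in the $m=1$ branch of Proposition~\ref{prop:normalization1}), or at some stage a dividing arc becomes \emph{boundary parallel}. The latter is precisely the second alternative in the statement, and once a boundary-parallel arc appears we simply stop, since that is one of the permitted outcomes. Thus the two bullets are exhaustive: if no boundary-parallel arc is ever produced, the reduction terminates in the all-slope-$a$ configuration with a single closed curve.

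The main obstacle I expect is controlling exactly how a bypass attachment along an arc internal to $\Sigma$ changes the slopes and counts of the dividing arcs, and verifying that the resulting curve is again a valid configuration (embedded, non-isolating, with the same number of arc endpoints on $\partial\Sigma$). The bypass-attachment calculation on the torus (Theorem~\ref{thm:bypass}) governs boundary tori, but here the bypasses live on the fiber surface $\Sigma$, so I would need the abstract bypass-attachment picture of Figure~\ref{fig:bypass} together with careful bookkeeping of which of the three Farey vertices the new arc acquires. Ensuring that one always makes progress toward $a$ (rather than cycling among $\{a,b,c\}$) is where the ``largest geodesic triangle'' hypothesis does the real work, and confirming this monotonicity—ideally via a complexity measure such as total geometric intersection number with the $b$- and $c$-slope classes—is the crux of the argument.
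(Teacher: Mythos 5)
There is a genuine gap at the heart of your argument: the mechanism by which you produce bypasses. A bypass for $\Sigma$ is a convex half-disk living in the ambient manifold on one side of $\Sigma$, and its existence cannot be read off from the pattern of $\Gamma_{\Sigma}$ alone; in particular, the Legendrian realization principle (Theorem~\ref{thm:LRP}) only lets you realize arcs on $\Sigma$ as Legendrian curves, it does not supply a half-disk attached along them. Your claim that ``abutting arcs of different slopes create the configuration needed to locate bypasses'' is therefore unsupported: an $I$-invariant neighborhood of $\Sigma$ admits no bypass that simplifies $\Gamma_{\Sigma}$ (as the paper itself notes via the Thurston--Bennequin inequality in the proof of Proposition~\ref{prop:normalizationCn1}), so any such configuration can persist with no bypass available. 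The way bypasses are actually found in this setting --- both in Etnyre--Honda and throughout Section~4 of this paper --- is to cut $C_n(s)$ along $\Sigma$ to get $\Sigma\times[0,1]$ with $\Gamma_0=\phi(\Gamma_1)$, choose a curve or arc $c$ on $\Sigma$, form the convex annulus or compressing disk $c\times[0,1]$, and apply the imbalance principle (Theorems~\ref{thm:Imbalance} and~\ref{thm:DiskImbalance}) to the discrepancy between $|c\cap\Gamma_1|$ and the corresponding count on $\Gamma_0$. Your proposal never cuts along the fiber and never invokes the monodromy, so it has no source of bypasses.

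This omission also explains why your argument cannot single out the slope $a$. From the combinatorics of the Farey triangle $\{a,b,c\}$ alone the three vertices are symmetric; the reason the surviving slope is the one \emph{closest to the repelling fixed point} $s_r$ is dynamical: $\phi$ pushes slopes toward $s_a$, so comparing $\Gamma_1$ with $\Gamma_0=\phi(\Gamma_1)$ produces imbalance precisely in the direction that consolidates arcs onto the vertex nearest $s_r$. Without this input, your ``monotonicity toward $a$'' has nothing forcing it. A smaller but real issue: Theorem~\ref{thm:bypass} computes the effect of a bypass on the dividing slope of a closed torus; for arcs on a punctured torus the effect of a bypass attachment is the local move of Figure~\ref{fig:bypass} and must be tracked by hand (as the paper does in Figures~\ref{fig:Sigma1}--\ref{fig:Sigma10}), not by the Farey calculus. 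Note finally that the paper does not prove this lemma --- it is quoted from Etnyre--Honda --- but the adjacent proofs (Propositions~\ref{prop:normalizationCn}, \ref{prop:positiveThickening} and~\ref{prop:normalizationCn1}) exhibit exactly the cut-and-imbalance template your argument is missing.
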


Now we are ready to normalize the dividing set on $\Sigma$ in our $C_n(s)$.

\begin{proposition}\label{prop:normalizationCn}
	Suppose $\Sigma \subset C_n(s)$ and $\Gamma_{\Sigma}$ consists of $k \geq 2$ properly embedded arcs and $m$ closed curves. Then there is an isotopic copy of $\Sigma$ such that one of the following holds.
	\begin{enumerate}
		\item $k$ is odd.
		\begin{itemize}
			\item $m=1$ and all dividing arcs have slope $\infty$. 
			\item $m=0$ and there are three isotopy classes of dividing arcs with slopes $\{0, 1, \infty\}$, $\mathrm{mult}(1)=k-2$ and $\mathrm{mult}(0)=\mathrm{mult}(\infty)=1$. Here, $\mathrm{mult}(s)$ is the number of arcs with slope $s$. 
			\item there is a boundary parallel arc (possibly with other dividing curves).
		\end{itemize}
		\item $k$ is even, and there is a boundary parallel arc (possibly with other dividing curves).
	\end{enumerate}
\end{proposition}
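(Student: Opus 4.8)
The plan is to run the Etnyre--Honda normalization machinery of Proposition~\ref{prop:normalization1} and Lemmas~\ref{lem:normalization2}--\ref{lem:normalization3} for the specific monodromy $\phi_n$. Proposition~\ref{prop:normalization1} already delivers the parity split and the boundary-parallel escape hatch: if $k$ is even, or if a boundary-parallel arc is present, we land directly in case~(2) or in the last bullet of case~(1). All of the remaining content is therefore to pin down, when $k$ is odd and no arc is boundary parallel, the \emph{slopes} (and their multiplicities) in the one-curve situation $m=1$ and the three-class situation $m=0$. Since $k$ odd together with $k\geq 2$ forces $k\geq 3$, Lemmas~\ref{lem:normalization2} and~\ref{lem:normalization3} apply, and what is left is to compute the relevant Farey-graph data of $\phi_n$ and read off the answer.

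First I would diagonalize the action of $\phi_n$ on $H_1(\Sigma)$. Its characteristic polynomial is $\lambda^2 + (n-2)\lambda + 1$, so the two eigenslopes are $\frac{(n-2)\pm\sqrt{(n-2)^2-4}}{2}$. A short estimate shows that for every $n\geq 5$ one eigenslope lies in $(0,1)$ and the other in $(2,\infty)$. Consequently the invariant geodesic joining them enters the ideal triangle with vertices $\{0,1,\infty\}$ across the edge $(0,1)$ and exits across the edge $(1,\infty)$; thus $\{0,1,\infty\}$ is exactly the Farey triangle that the axis of $\phi_n$ crosses, with $1$ the pivot vertex shared by the two crossed edges. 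I would also record the orbit $0\mapsto\infty\mapsto n-2\mapsto\cdots$ of $\phi_n$ on slopes, which both re-confirms these positions and identifies $\infty$ as the vertex of $\{0,1,\infty\}$ nearest the repelling slope.

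The desired output is that the three-class form be supported on $\{0,1,\infty\}$ (case~(1b)) and the one-curve form on the single slope $\infty$ (case~(1a)), with multiplicities $\mathrm{mult}(0)=\mathrm{mult}(\infty)=1$ and $\mathrm{mult}(1)=k-2$. I expect the main obstacle to be reaching exactly these canonical slopes: Lemmas~\ref{lem:normalization2}--\ref{lem:normalization3} a priori yield the largest geodesic triangle hanging off \emph{one side} of the axis (for instance $\{-1,0,\infty\}$ or $\{1,\tfrac32,2\}$) together with its reduction to the vertex nearest the repelling slope, rather than the central crossed triangle $\{0,1,\infty\}$ itself. I therefore anticipate having to apply explicit bypass rotations and slides (Theorems~\ref{thm:bypass-rotation} and~\ref{thm:bypass-sliding}) to consolidate the arc slopes onto $\{0,1,\infty\}$ and onto $\infty$, checking at each stage that the move either produces one of the listed normal forms or exposes a boundary-parallel arc, which throws us into case~(1c) or~(2). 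The multiplicity count would then follow from the combinatorics of disjoint essential arcs on the once-punctured torus: the two outer slopes $0$ and $\infty$ can each support only a single non--boundary-parallel arc, so the pivot slope $1$ must absorb the remaining $k-2$. Verifying these multiplicities, rather than the slopes themselves, is where I expect the real care to be needed.
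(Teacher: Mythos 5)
Your setup is on track: the parity split and the escape to the boundary-parallel case via Proposition~\ref{prop:normalization1}, the reduction to $k\geq 3$, the eigenslope computation (your $\frac{(n-2)\pm\sqrt{(n-2)^2-4}}{2}$ agrees with the paper's $s^n_a\in(0,\tfrac12)$, $s^n_r\in(2,\infty)$ after rationalizing), and the correct observation that Lemmas~\ref{lem:normalization2}--\ref{lem:normalization3} only hand you a largest triangle on \emph{one side} of the axis, reduced to its vertex nearest $s_r$ --- not the central triangle $\{0,1,\infty\}$. (Minor point: the paper computes the $P_a$ triangle to be $\{\tfrac12,\tfrac23,1\}$, so the $P_a$ output of Lemma~\ref{lem:normalization3} is $k$ arcs of slope $1$ plus one closed curve, while the $P_c$ output is $k$ arcs of slope $\infty$ plus one closed curve, which is already case~(1a) verbatim.)

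The genuine gap is in how you propose to bridge from the one-sided output to case~(1b). Bypass rotation and bypass sliding (Theorems~\ref{thm:bypass-rotation} and~\ref{thm:bypass-sliding}) only \emph{modify} the attaching arc of a bypass you already have; they cannot manufacture one, so they give you no mechanism to change the slope-$1$ configuration at all. The paper's actual move is to cut $C_n(s)$ along $\Sigma$, so that the two copies carry $\Gamma_1$ and $\Gamma_0=\phi_n(\Gamma_1)$, and then to run the Imbalance Principle on a vertical annulus $c\times[0,1]$ over a closed curve $c$ of a carefully chosen slope ($\tfrac{1}{-n+3}$ for $n>5$, $\tfrac25$ for $n=5$) for which $|c\cap\Gamma_1|$ and $|c\cap\phi_n(\Gamma_1)|$ differ; the monodromy is what creates the imbalance and hence the bypass. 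Attaching that bypass either produces a boundary-parallel arc (case~(1c)) or, precisely when it straddles the closed dividing curve, converts $k$ arcs of slope $1$ plus one closed curve into arcs of slopes $\{0,1,\infty\}$ with $\mathrm{mult}(0)=\mathrm{mult}(\infty)=1$ and $\mathrm{mult}(1)=k-2$. In particular the multiplicities are read off from the effect of this specific bypass, not from disjointness: your claim that slopes $0$ and $\infty$ ``can each support only a single non-boundary-parallel arc'' is false on a once-punctured torus, where one may take arbitrarily many disjoint parallel copies of each edge of the ideal triangulation with slopes $0$, $1$, $\infty$. Without the annulus/imbalance step your argument has no source of bypasses and no derivation of the stated multiplicities.
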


\begin{proof}
	Assume that any isotopic copy of $\Sigma$ in $C_n(s)$ contains a boundary parallel dividing arc. By Proposition~\ref{prop:normalization1}, this implies that $k$ is odd. We first calculate the slopes of the fixed points $v^n_a$ and $v^n_r$ of $\phi_n$ for $n \geq 5$ and obtain 
	\[
		s_a^n = \frac{2}{n-2+\sqrt{n^2-4n}} \;\text{ and }\; s_r^n = \frac{2}{n-2-\sqrt{n^2-4n}}.
	\]
	Thus $0 < s_a^n < 1/2$ and $2 < s_r^n < \infty$. For any $n \geq 5$, the largest geodesic triangle with the vertices in $P_c^n$ is $\{\infty, -1, 0\}$, and the largest geodesic triangle with the vertices in $P_a^n$ is $\{\frac12, \frac23, 1\}$. By Lemma~\ref{lem:normalization2} and \ref{lem:normalization3}, there is an isotopic copy of $\Sigma$ such that 
	\begin{itemize}
		\item $\Gamma_{\Sigma}$ consists of one closed curve and $k$ arcs with slope $\infty$, or
		\item $\Gamma_{\Sigma}$ consists of one closed curve and $k$ arcs with slope $1$.
	\end{itemize}
	We claim that for the second case there is another isotopic copy of $\Sigma$ such that there is a boundary parallel arc, or there are three isotopy classes of dividing arcs with slopes $\{0, 1, \infty\}$, $\mathrm{mult}(1)=k-2$ and $\mathrm{mult}(0)=\mathrm{mult}(\infty)=1$. This claim will compete the proof.
	
	We remain to prove the claim. Let $U$ be a small $I$-invariant neighborhood of $\Sigma$ in $C_n(s)$. Then $C_n(s) \setminus U = \Sigma\times[0,1]$. Let $\Sigma_i:=\Sigma\times\{i\}$ for $i = 0,1$ and $\Gamma_i$ be a dividing set of $\Sigma_i$. Then we have $\Gamma_1 = \Gamma_{\Sigma}$ and $\Gamma_0 = \phi_n(\Gamma_1)$. 
	
	We first consider the case $n > 5$. Take a closed curve $c$ on $\Sigma_1$ with slope $\frac{1}{-n+3}$. Consider an annulus $c \times [0,1]$ and let $c_i = c \times \{i\}$ for $i=0,1$. Assume $c_i$ intersects $\Gamma_i$ minimally. Then $|c_1 \cap \Gamma_1| = (n-2)(k+1)$ and $|c_0 \cap \Gamma_0| =  0$. Thus there is a bypass on $A$ along $c_1$. If the bypass does not straddle the closed dividing curve, attaching the bypass results in a boundary parallel arc (here, we attach the bypass from the back). If the bypass straddles the closed dividing curve, attaching the bypass results in the dividing set with slope $\{0,1,\infty\}$ where $\mathrm{mult}(1)= k-2$ and $\mathrm{mult}(0)= \mathrm{mult}(\infty)=1$. 
	
	Now if $n = 5$, take a closed curve $c$ on $\Sigma_1$ with slope $\frac{2}{5}$. Consider an annulus $c \times [0,1]$ and let $c_i = c \times \{i\}$ for $i=0,1$. Assume $c_i$ intersects $\Gamma_i$ minimally. Then $|c_1 \cap \Gamma_1| = 3(k+1)$ and $|c_0 \cap \Gamma_0| =  k+1$. Thus there is a bypass on $A$ along $c_1$. If the bypass does not straddle the closed dividing curve, attaching the bypass results in a boundary parallel arc. If the bypass straddles the closed dividing curve, attaching the bypass results in the dividing set with slope $\{0,1,\infty\}$ where $\mathrm{mult}(1)= k-2$ and $\mathrm{mult}(0)= \mathrm{mult}(\infty)=1$. 
\end{proof}

We also consider the case for $k=1$ without boundary parallel half Giroux torsion.

\begin{proposition}\label{prop:normalizationCn1}
	Suppose there is no boundary parallel half Giroux torsion in $C_n(s)$ and $\Gamma_{\Sigma}$ consists of one properly embedded arc and $m$ closed curves. Then there exists an isotopic copy of $\Sigma$ in $C_n(s)$ such that 
	\begin{itemize}
		\item one arc and one closed curve with slope $\infty$.
		\item one boundary parallel arc without any other dividing curves. 
	\end{itemize}
\end{proposition}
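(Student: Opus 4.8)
\textbf{The plan} is to reduce the $k=1$ case to a situation we already understand by using convex annuli to detect bypasses, exactly in the spirit of the proof of Proposition~\ref{prop:normalizationCn}, but now exploiting the absence of half Giroux torsion to rule out the unwanted configurations. By Proposition~\ref{prop:normalization1} applied with $k=1$, any isotopic copy of $\Sigma$ either already has a boundary parallel arc (in which case, since $k=1$, the single arc \emph{is} boundary parallel and we are in the second bullet) or we may assume no boundary parallel arc occurs. So the content is to analyze the latter case: the single non-boundary-parallel arc together with $m$ closed curves, and to show that we can normalize to the first bullet, namely one arc and exactly one closed curve, both of slope $\infty$.

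First I would fix the slope of the single arc. With only one arc present, the normalization of Etnyre--Honda (Lemma~\ref{lem:normalization2} and Lemma~\ref{lem:normalization3}) does not literally apply since those require $k \geq 3$; however the arc is non-separating and can be isotoped so its slope is a vertex of the Farey graph lying in $P_c^n$ or $P_a^n$. Using the explicit computation from Proposition~\ref{prop:normalizationCn} that $0 < s_a^n < 1/2$ and $2 < s_r^n < \infty$ for $n \geq 5$, the candidate extremal slopes are $\infty$ and $1$, and I would argue as in that proof that the arc can be taken to have slope $\infty$ (the slope $1$ case is pushed across by the same annular bypass argument used there, which for $k=1$ either produces a boundary parallel arc — sending us to the second bullet — or reduces the slope to $\infty$). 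This pins down the arc to slope $\infty$.

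Next I would eliminate the extra closed curves and show $m=1$. The tool is the Imbalance Principle (Theorem~\ref{thm:Imbalance}) together with edge-rounding: take a convex annulus $A$ whose core is Legendrian and isotopic to the slope-$\infty$ arc, spanning between $\Sigma$ and a pushed-off copy, and measure $|\cdot \cap \Gamma|$ on the two ends. Each extra closed dividing curve contributes intersections that create an imbalance, yielding a bypass along $\Sigma$. Attaching such a bypass either removes a closed curve (reducing $m$) or creates a boundary parallel arc. The key structural point is that a closed dividing curve parallel to the arc, once we try to push it off, is exactly what would have to be absorbed into Giroux torsion: a pair of parallel closed curves bounding a product region with trivial holonomy is precisely a (half) Giroux torsion layer. \textbf{This is where the hypothesis enters.} Since $C_n(s)$ contains no boundary parallel half Giroux torsion, we cannot accumulate such parallel closed curves indefinitely, so the bypass attachments must terminate either in the single-closed-curve configuration of the first bullet or in a boundary parallel arc as in the second.

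\emph{The main obstacle} I anticipate is the bookkeeping in the second step: ruling out the possibility that removing one closed curve re-introduces another, i.e. showing the process genuinely terminates without hidden torsion. Concretely, I expect the crux to be proving that a configuration consisting of the slope-$\infty$ arc plus $\geq 2$ closed curves must, after a bypass, either strictly decrease $m$, produce a boundary parallel arc, or exhibit an embedded convex Giroux torsion layer — and then invoking the no-half-torsion hypothesis to exclude the last outcome, which by Theorem~\ref{twolayers} and the classification of basic slices forces one of the first two. Once termination is established, the two surviving normal forms are exactly the two bullets in the statement, completing the proof.
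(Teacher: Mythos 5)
There is a genuine gap, and it sits in your very first reduction. You claim that if some isotopic copy of $\Sigma$ has a boundary parallel arc then, since $k=1$, ``the single arc is boundary parallel and we are in the second bullet.'' But the second bullet requires a boundary parallel arc \emph{without any other dividing curves}, whereas Proposition~\ref{prop:normalization1} only delivers a boundary parallel arc \emph{possibly with other dividing curves}: there may be $j$ boundary parallel closed curves and $h$ essential closed curves in addition to the arc. Removing these is the heart of the paper's proof and is exactly where the no-half-Giroux-torsion hypothesis is used. The paper first kills the $h$ essential closed curves with a compressing disk $D_\alpha$ over an arc of slope $\phi(r)$ (the monodromy shift forces $|\Gamma_1\cap D_\alpha|-|\Gamma_0\cap D_\alpha|\geq 4$, so Theorem~\ref{thm:DiskImbalance} gives a reducing bypass); then, with only boundary parallel closed curves left, it cuts $\Sigma\times[0,1]$ along two compressing disks $D_\alpha, D_\beta$ down to a ball, observes that every dividing set on these disks except one yields a bypass reducing the number of dividing curves, identifies the single exceptional tight structure as an $I$-invariant neighborhood via the Thurston--Bennequin inequality for convex surfaces, and exhibits that $I$-invariant neighborhood as a convex Giroux $\tfrac{j}{2}$-torsion layer glued to a product --- contradicting the hypothesis. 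Your substitute for this, ``a pair of parallel closed curves bounding a product region with trivial holonomy is precisely a (half) Giroux torsion layer,'' is not correct as stated: parallel dividing curves on a convex surface do not by themselves produce Giroux torsion; one must identify the contact structure on the product region, which is what the disk analysis accomplishes.

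Your second step also misplaces the effort. In the case with no boundary parallel arc, Proposition~\ref{prop:normalization1} already forces $m=1$ (for $k$ odd it gives exactly one closed curve parallel to the arc), so there are no extra closed curves to remove there and the torsion hypothesis is not needed; the only task is to move the common slope $r$ of the arc and closed curve to $\infty$. The paper does this by using $\phi_n^{\pm1}$ to put $r$ in a fundamental domain determined by the fixed-point slopes $s_a^n, s_r^n$, then repeatedly attaching bypasses found on annuli $c\times[0,1]$ with $c$ of slope $\phi_n(r)$ via Theorem~\ref{thm:Imbalance}, tracking the slope on the Farey graph until it reaches $0$ or $1$ and hence $\infty$. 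Your sketch of this step is in the right spirit but too vague to certify (the appeal to ``extremal slopes in $P_c^n$ or $P_a^n$'' has no analogue of Lemmas~\ref{lem:normalization2}--\ref{lem:normalization3} for $k=1$, as you note), and your proposed annulus with core parallel to the slope-$\infty$ arc can be made disjoint from closed curves of the same slope, so it would not detect the imbalance you need.
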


We will prove this proposition in Section~\ref{subsec:infty}.

\subsection{Thickening \texorpdfstring{$C_n(s)$}{Cn(s)}}\label{subsec:thickening}
Here, we will prove Lemma~\ref{lem:thickeningPositive}. We will thicken $C_n(s)$ to $C_n(s')$ by finding a bypass for $-\partial C_n(s)$ along $\partial \Sigma$. Recall that due to our slope conventions, this bypass has slope $0$ on $\partial C_n(s)$ and $s$ will change clockwise on the Farey graph.

\begin{figure}[htbp]
	\begin{center}
	\begin{overpic}[scale=0.9,tics=20]{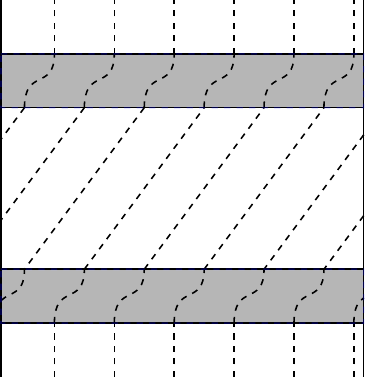}
	\put(-50,150){$\Sigma\times\{1\}$}
	\put(-50,8){$\Sigma\times\{0\}$}
	\put(10,150){$1$}
	\put(35,150){$2$}
	\put(60,150){$3$}
	\put(85,150){$4$}
	\put(112,150){$5$}
	\put(138,150){$6$}
	\put(10,8){$1$}
	\put(35,8){$2$}
	\put(60,8){$3$}
	\put(85,8){$4$}
	\put(112,8){$5$}
	\put(138,8){$6$}
	\end{overpic}
	\caption{Edge rounding when $s=3$. The left side is identified with the right side.}
	\label{fig:positive-edge-rounding}
	\end{center}
\end{figure}

\begin{proposition}\label{prop:positiveThickening}
	Suppose $s \geq 2$ and $s \notin \{\frac{4k+1}{k}\mid k \in \mathbb{N}\}$. Then there is an isotopic copy of $\Sigma$ in $C_n(s)$ such that $\Gamma_{\Sigma}$ contains a boundary parallel arc.
\end{proposition}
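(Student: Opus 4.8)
The plan is to read off the number of dividing arcs on $\Sigma$ from the boundary slope, invoke the normalization of Proposition~\ref{prop:normalizationCn}, and then locate a boundary parallel arc by rounding the edges between consecutive fibers as in Figure~\ref{fig:positive-edge-rounding}.

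First I would count the arcs. Write $s = p/q$ in lowest terms with $p,q>0$. Since $\partial\Sigma$ has slope $0$ while $\Gamma_{\partial C_n(s)}$ consists of two curves of slope $s$, the boundary dividing set meets $\partial\Sigma$ in $2p$ points; because the dividing sets of $\Sigma$ and of $\partial C_n(s)$ interleave along $\partial\Sigma$ (and thus have the same number of endpoints there), the set $\Gamma_\Sigma$ has $2p$ endpoints and hence consists of $p$ arcs together with some closed curves. As $s\geq 2$ forces $p\geq 2$, Proposition~\ref{prop:normalizationCn} applies. When $p$ is even that proposition immediately produces an isotopic copy of $\Sigma$ carrying a boundary parallel arc and we are finished, so I would assume $p$ is odd from here on. In that case Proposition~\ref{prop:normalizationCn} leaves only two possibilities beyond the desired boundary parallel arc: either $\Gamma_\Sigma$ is a single closed curve together with $p$ arcs of slope $\infty$, or it is three isotopy classes of arcs of slopes $\{0,1,\infty\}$ with multiplicities $p-2,1,1$ and no closed curve.

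In either of these two forms I would take a small $I$-invariant neighborhood $U$ of $\Sigma$, so that $C_n(s)\setminus U\cong\Sigma\times[0,1]$ with $\Gamma_1=\Gamma_\Sigma$ and $\Gamma_0=\phi_n(\Gamma_\Sigma)$, and recover $\partial C_n(s)$ by gluing the two ends of the vertical annulus $A=\partial\Sigma\times[0,1]$. Using the M\"obius action of $\phi_n$ on slopes (for instance the slope-$\infty$ arcs of $\Gamma_1$ become slope-$(n-2)$ arcs of $\Gamma_0$), one rounds the edges of $\Gamma_0$ and $\Gamma_1$ along $A$ and follows the resulting multicurve, exactly as drawn for $s=3$ in Figure~\ref{fig:positive-edge-rounding}. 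The slope of the boundary dividing curve built this way is the prescribed $s$, and an outermost band of the traced multicurve yields a boundary parallel dividing arc on a suitable isotopic copy of $\Sigma$, which is precisely the conclusion of the proposition.

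The hard part will be the bookkeeping in this last step. I must track the cyclic positions on $\partial\Sigma$ of the $2p$ endpoints, which are governed by $s=p/q$, together with the shift contributed by the monodromy, and decide exactly when rounding cuts off a bigon containing no other dividing curve. It is the arithmetic of this condition that isolates the exceptional slopes: an outermost bigon is produced for every odd $p$ except when $(p,q)=(4k+1,k)$, i.e.\ $s=4+\tfrac1k$, where the endpoints align so that no such bigon appears. These exceptional slopes need a separate treatment and are therefore excluded from the statement.
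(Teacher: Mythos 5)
Your setup is right: the arc count ($p$ arcs for $s=p/q$ in lowest terms), the reduction via Proposition~\ref{prop:normalizationCn} to the case $p$ odd, and the two remaining normalized forms of $\Gamma_\Sigma$ all match the paper. But the core of the argument is missing, and the mechanism you propose for the final step would not work. Rounding the edges of $\Sigma\times[0,1]$ and tracing the resulting multicurve on the boundary of the handlebody only tells you the dividing set of $\partial(\Sigma\times[0,1])$; it does not produce a new isotopic copy of $\Sigma$, and there is no sense in which an ``outermost band'' of that traced multicurve becomes a boundary parallel arc on a perturbed fiber. To change $\Gamma_\Sigma$ you must exhibit a bypass, and the paper does this by taking a compressing disk $D_\alpha=\alpha\times[0,1]$ for an arc $\alpha$ of slope $\infty$ disjoint from $\Gamma_1$, arranging $\partial D_\alpha$ to meet only $\Gamma_0$, and invoking the disk imbalance principle (Theorem~\ref{thm:DiskImbalance}). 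The real content is then deciding whether some bypass on $D_\alpha$ avoids straddling the closed dividing curve: the two sides of the unique intersection with the closed curve carry $|2k+1|$ and $|m-(2k+1)|$ arc intersections, and only when the difference $d=\bigl||m-2k-1|-|2k+1|\bigr|$ equals $1$ can all the arcs on $D_\alpha$ be nested so that every bypass straddles the closed curve.

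This is exactly where the excluded slopes come from, and your guess about them is not quite right. The condition $d=1$ gives $m=1$, $m=4k+1$, or $m=4k+3$. The slopes $\frac{4k+1}{k}$ are excluded by hypothesis because there the argument genuinely fails; but $m=4k+3$ also has $d=1$ and is \emph{not} excluded --- the paper handles it by attaching all $2k+1$ nested bypasses and exhibiting an overtwisted disk, contradicting tightness. Your proposal says nothing about this case. You also do not address the second normalized form (arcs of slopes $\{0,1,\infty\}$ with multiplicities $p-2,1,1$), which in the paper needs a separate two-stage argument: a first bypass from a compressing disk, followed by a second bypass found on an annulus $\gamma\times[0,1]$ via the imbalance principle, with the subcase $m=3$ treated on its own. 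Without the compressing-disk/imbalance machinery, the nestedness analysis that isolates $d=1$, the overtwisted-disk argument for $m=4k+3$, and the annulus argument for the $\{0,1,\infty\}$ configuration, the proof is not there.
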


\begin{proof}
  Let $s=\frac{m}{k}$ for a pair of relatively prime positive integers $m, k$. There are $m$ properly embedded dividing arcs on $\Sigma$. We cut $C_n(s)$ along $\Sigma$ to obtain a genus-$2$ handlebody $\Sigma \times [0,1]$ and round the edges to obtain a smooth convex boundary. Let $\Sigma_i$ be $\Sigma \times \{i\}$ for $i=0,1$ and $\Gamma_i$ the dividing set on $\Sigma_i$. Then $\Gamma_0 = \phi_n(\Gamma_1)$. After rounding the edges, the entire dividing set only contains closed curves, but we will keep calling the dividing curves that pass through $\bd\Sigma$ dividing arcs for convenience.  
    
  For $i=0,1$, the dividing arcs in $\Gamma_i$ divides $\bd\Sigma_i$ into $2m$ intervals, which we will label by $1$ through $2m$. The dividing curves on $\partial \Sigma \times [0,1]$ connect the $i$-th interval on $\Sigma_1$ to the $(i-2k-1)$-th interval (mod $2m$) on $\Sigma_0$. See Figure~\ref{fig:positive-edge-rounding} for example.

	\begin{figure}[htbp]
	\begin{center}
		\begin{overpic}[tics=20]{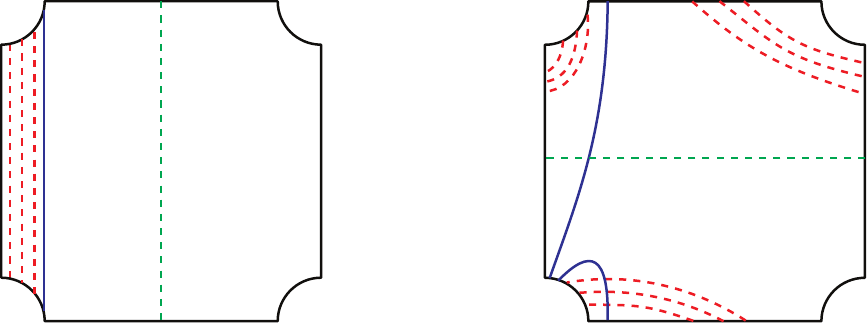}
			\put(-1,14){\tiny$1$}
			\put(5,12){\tiny$2$}
			\put(10,8){\tiny$3$}
			\put(16,2){\tiny$4$}

			\put(15,147){\tiny$4$}
			\put(10,140){\tiny$5$}
			\put(5,138){\tiny$6$}
			\put(-1,136){\tiny$1$}

			\put(142,8){\tiny$1$}
			\put(142,144){\tiny$1$}

			\put(262,14){\tiny$1$}
			\put(271,11){\tiny$2$}
			\put(277,5){\tiny$3$}
			\put(279,-2){\tiny$4$}

			\put(279,152){\tiny$4$}
			\put(276,145){\tiny$5$}
			\put(271,139){\tiny$6$}
			\put(263,136){\tiny$1$}

			\put(403,8){\tiny$1$}
			\put(403,144){\tiny$1$}

			\put(72,-20){\large$\Sigma_1$}
			\put(335,-20){\large$\Sigma_0$}
		\end{overpic}
		\vspace{0.8cm}
		\caption{$\Sigma_1$ and $\Sigma_0$ in $C_n(3)$. The dotted lines are dividing curves. The blue lines are $\alpha \times \{1\}$ and $\alpha \times\{0\}$. In each drawing, the top and bottom sides are identified, and so are the left and right sides. In this figure, as well as in the rest of the paper, we draw $\Sigma_0$ with the orientation induced from the fibration $C_n(s)$, which disagrees with that induced by $\Sigma \times [0,1]$. Also in this picture (and this picture only), we have enumerated the regions of $\bd\Sigma$ to better see the twisting of the dividing set as it runs over $\bd \Sigma \times [0,1]$.}
		\label{fig:Sigma1}
	\end{center}
	\end{figure}

	Suppose there is no boundary parallel dividing arc in $\Gamma_i$ for $i=0,1$. By Proposition~\ref{prop:normalizationCn} and Proposition~\ref{prop:normalizationCn1}, we need to consider two cases.

	(1) {\it $\Gamma_1$ contains $m$ arcs and one closed curve with slope $\infty$}: Let $\alpha$ be an arc on $\Sigma_1$ with slope $\infty$ that does not intersect $\Gamma_1$. Let $D_{\alpha}:= \alpha \times [0,1]$ be a compressing disk for $\Sigma \times [0,1]$. Perturb $D_{\alpha}$ so that its boundary does not intersect any dividing curve on $\partial \Sigma \times [0,1]$. This results in a shift of the basepoints of $\alpha$ on $\Sigma_0$ by $(2k+1)$ intervals following the negative orientation of $\partial \Sigma$, see Figure~\ref{fig:Sigma1} for example. Thus $\bd D_{\alpha}$ only intersects the dividing curves on $\Sigma_0$. We perturb $D_{\alpha}$ further so that it is convex with Legendrian boundary. We can also assume that $\bd D_{\alpha}$ intersects $\Gamma_0$ minimally, so it will intersect the closed dividing curve exactly once. Also, this intersection point separates $\alpha \times \{0\}$ into two sides. On one side it intersects $|2k+1|$ dividing arcs, and on the other side it intersects $|m-(2k+1)|$ dividing arcs. Since the total number of intersection points is greater than $2$, there is a bypass in $D_{\alpha}$ by Theorem~\ref{thm:DiskImbalance}. The difference between the number of intersections is:
  \begin{equation*}
    d= ||m-2k-1| - |2k+1|| = \begin{cases} |m-4k-2| & m \geq 2k+1\\ |m| & m < 2k + 1 \end{cases} 
  \end{equation*}
  If $d \neq 1$, then the dividing curves on $D_{\alpha}$ cannot be nested as shown in the right drawing of Figure~\ref{fig:disk1}. Thus there is always a bypass which does not straddle the closed dividing curve. After attaching this bypass, we obtain an isotopic copy of $\Sigma$ containing a boundary parallel dividing arc.

	\begin{figure}[htbp]
		\begin{center}
			\begin{overpic}[tics=20]{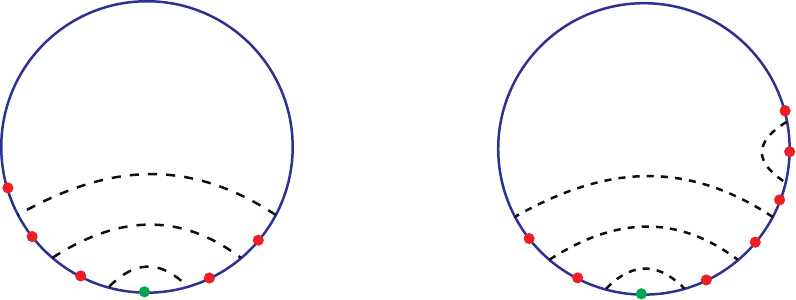}
			\end{overpic}
			\caption{Dividing curves in $D_{\alpha}$.}
			\label{fig:disk1}
		\end{center}
	\end{figure}
       
	If $d=1$, then $m=1$, $m=4k+1$ or $m=4k+3$. Since we assume $s \geq 2$ and $s \neq \frac{4k+1}{k}$, we only need to consider the case $m = 4k+3$. In this case, there are $2k+2$ dividing curves on $D_{\alpha}$ and they can be nested as shown in the left drawing of Figure~\ref{fig:disk1}. Attach all $2k+1$ bypasses to $\Sigma_0$ in sequence to obtain $\Sigma_{1/2}$ with dividing slopes $\{\infty,-1,0\}$ such that $\mathrm{mult}(\infty)=m-2$ and $\mathrm{mult}(-1)=\mathrm{mult}(0)=1$, see the right drawing of Figure~\ref{fig:Sigma3}. Then we can find an overtwisted disk as shown in Figure~\ref{fig:Sigma3}. Thus we can exclude this case too.

	\begin{figure}[htbp]
	\begin{center}
		\begin{overpic}[tics=20]{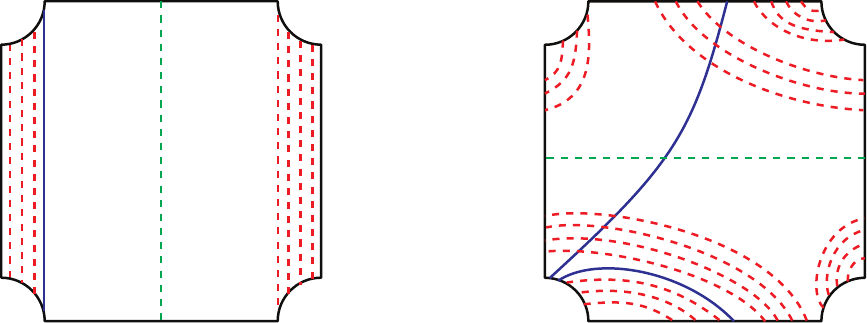}
			\put(72,-20){\large$\Sigma_1$}
			\put(335,-20){\large$\Sigma_0$}
		\end{overpic}
		\vspace{0.8cm}
		\caption{$\Sigma_1$ and $\Sigma_0$ in $C_n(7)$. The dotted lines are dividing curves and the blue lines are $\alpha\times\{1\}$ and $\alpha\times\{0\}$}
		\label{fig:Sigma2}
	\end{center}
	\end{figure}

	\begin{figure}[htbp]
	\begin{center}
		\begin{overpic}[tics=20]{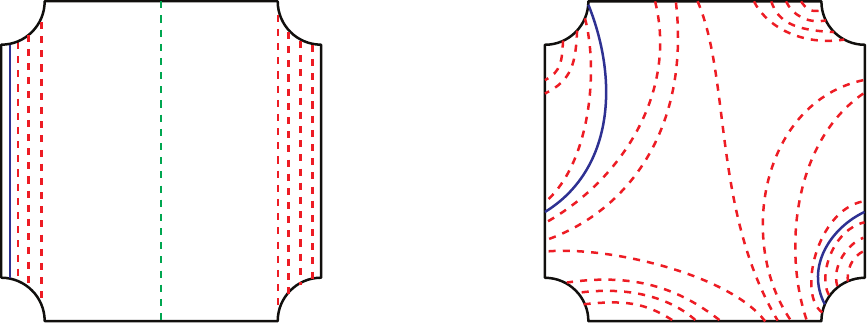}
			\put(72,-20){\large$\Sigma_1$}
			\put(335,-20){\large$\Sigma_{\frac12}$}
		\end{overpic}
		\vspace{0.8cm}
		\caption{$\Sigma_1$ and $\Sigma_{\frac12}$ in $C_n(7)$. The dotted lines are dividing curves and the blue lines are the intersections between an overtwisted disk and $\Sigma \times \{\frac12,1\}$}
		\label{fig:Sigma3}
	\end{center}
	\end{figure}

	(2) {\it $m \geq 3$ and $\Gamma_1$ contains arcs with slope $\{0,1,\infty\}$ where $\mathrm{mult}(0) = \mathrm{mult}(\infty) = 1$ and $\mathrm{mult}(1)=m-2$}: Let $\beta$ be an arc on $\Sigma_1$ with slope $\infty$ that does not intersect $\Gamma_1$, and $D_{\beta}:= \beta \times [0,1]$ be a compressing in $\Sigma \times [0,1]$. We perturb $D_{\beta}$ so that it is convex with Legendrian boundary, $\bd D_{\beta}$ does not intersect the dividing curves on $\bd \Sigma \times [0,1]$, and $\beta \times \{0\}$ intersects $\Gamma_0$ minimally, see Figure~\ref{fig:Sigma4}. Since $\beta\times\{0\}$ intersects more than four dividing curves in $\Gamma_0$, we can find a bypass in $D_{\beta}$ whose attaching arc lies on $\beta \times \{0\}$ by Theorem~\ref{thm:DiskImbalance}. From now on, we call dividing curves on $\Sigma_0$ with slopes $0$, $\frac{1}{n-2}$ and $\frac{1}{n-3}$ black, red and green dividing curves, respectively, see the right drawing of Figure~\ref{fig:Sigma4}. There are two cases we need to consider.  

	\begin{figure}[htbp]
	\begin{center}
		\begin{overpic}[tics=20]{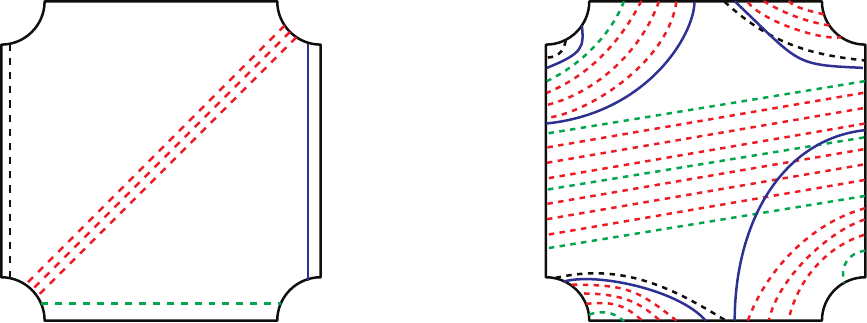}
			\put(72,-20){\large$\Sigma_1$}
			\put(335,-20){\large$\Sigma_0$}
		\end{overpic}
		\vspace{0.8cm}
		\caption{$\Sigma_1$ and $\Sigma_0$ in $C_n(\frac52)$. The dotted lines are dividing curves and the blue lines are $\alpha \times \{1\}$ and $\alpha \times\{0\}$}
		\label{fig:Sigma4}
	\end{center}
	\end{figure}

	\begin{figure}[htbp]
		\begin{center}
			\begin{overpic}[tics=20]{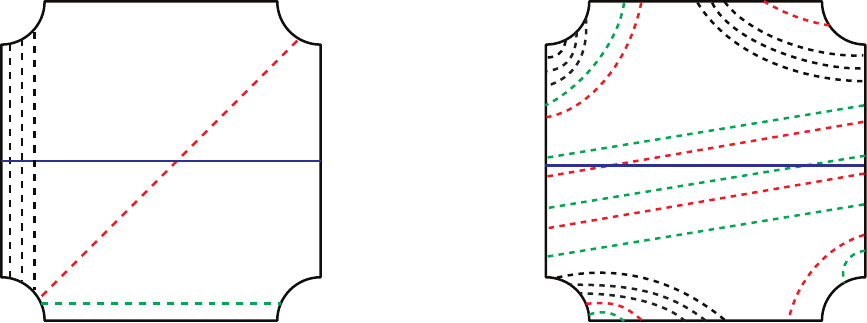}
				\put(72,-20){\large$\Sigma_1$}
				\put(335,-20){\large$\Sigma_0$}
			\end{overpic}
			\vspace{0.8cm}
			\caption{$\Sigma_1$ and $\Sigma_0$ in $C_n(\frac52)$. The dotted lines are dividing curves and the blue lines are $\gamma \times \{1\}$ and $\gamma \times\{0\}$}
			\label{fig:Sigma5}
		\end{center}
		\end{figure}

	First, suppose $m > 3$. Since $s \geq 2$, we can assume that $m > 2k-1$. In this case, the bypass cannot straddle the black dividing curve, see the right drawing of Figure~\ref{fig:Sigma4}. More precisely, there is no bypass passing through green, black, green dividing curves in order. Therefore, the only bypass that does not produce a boundary parallel arc is the one that straddles the green dividing curve. If the bypass straddles the green dividing curve, then attaching the bypass results in $\Sigma_{1/2}$ with $\Gamma_{1/2}=\{0,\frac{1}{n-2},\frac{1}{n-3}\}$ where $\mathrm{mult}(0)=3$, $\mathrm{mult}(\frac{1}{n-3})=m-4$ and $\mathrm{mult}(\frac{1}{n-2})=1$. We can convert $\Gamma_{1/2}$ into $\{0,1,\infty\}$ by acting on $\Sigma_{1/2}$ via $\phi_n^{-1}$.  Now we cut $C_n(s)$ along $\Sigma_{1/2}$ and obtain $\Sigma\times [0,1]$ again. We relable $\Sigma \times \{i\}$ as $\Sigma_i$ for $i = \{0,1\}$. Then $\Gamma_1 = \{0,1,\infty\}$ with $\mathrm{mult}(0) = 1$, $\mathrm{mult}(1) = m-4$, $\mathrm{mult}(\infty) = 3$. Take a closed curve $\gamma$ with slope $0$ and let $A_{\gamma} := \gamma\times[0,1]$ be a properly embedded annulus in $\Sigma\times[0,1]$. We perturb $A_{\gamma}$ so that it is convex with Legendrian boundary and intersects $\Gamma_0 \cup \Gamma_1$ minimally, see Figure~\ref{fig:Sigma5}. Since $|\gamma \times \{1\} \cap \Gamma_1| = m-1$ and $|\gamma \times \{0\} \cap \Gamma_0| = m-3$, we can find a bypass whose attaching arc lies on $\Sigma_1$ by Theorem~\ref{thm:Imbalance}. Notice that we attach this bypass from the back. It is easy to check that any possible bypass attachment results in an isotopic copy of $\Sigma$ that contains a boundary parallel dividing arc.    

	Now suppose $m = 3$. Since we assume $s \geq 2$, the only possible value of $k$ is $1$. In this case, there are exactly two possible bypass attachments on $\Sigma_0$ and both result in an isotopic copy of $\Sigma$ containing boundary parallel dividing arc, see Figure~\ref{fig:Sigma6}. 
\end{proof}

\begin{figure}[htbp]
	\begin{center}
		\begin{overpic}[tics=20]{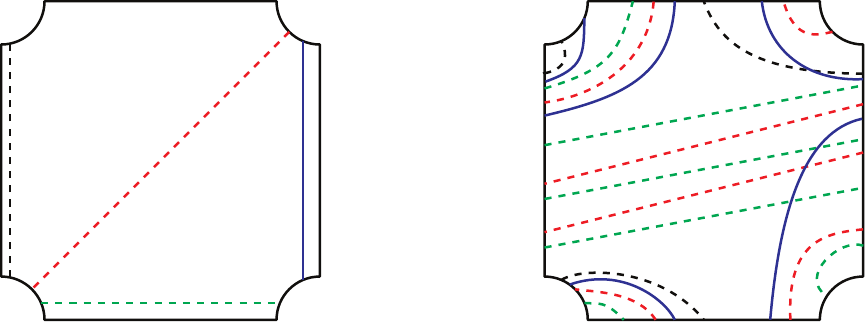}
			\put(72,-20){\large$\Sigma_1$}
			\put(335,-20){\large$\Sigma_0$}
		\end{overpic}
		\vspace{0.8cm}
		\caption{$\Sigma_1$ and $\Sigma_0$ in $C_n(3)$. The dotted lines are dividing curves and the blue lines are $\alpha \times \{1\}$ and $\alpha \times\{0\}$}
		\label{fig:Sigma6}
	\end{center}
\end{figure}

Now we consider the case $s < 0$.

\begin{proposition}\label{prop:negativeThickening}
	Suppose $s < 0$. Then there is an isotopic copy of $\Sigma$ in $C_n(s)$ such that $\Gamma_{\Sigma}$ contains a boundary parallel arc.
\end{proposition}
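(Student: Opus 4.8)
The plan is to mirror the strategy of Proposition~\ref{prop:positiveThickening}, producing a boundary parallel dividing arc on an isotopic copy of $\Sigma$ by a disk- or annulus-imbalance argument. If some isotopic copy of $\Sigma$ already carries a boundary parallel arc we are done, so assume none does. For $s = p/q < 0$ the number of properly embedded dividing arcs on $\Sigma$ equals $|p|$ (the geometric intersection of the two boundary dividing curves of slope $s$ with $\partial\Sigma$ of slope $0$). By Proposition~\ref{prop:normalizationCn}, an even number of arcs forces a boundary parallel arc immediately, so I may assume $|p|$ is odd and, invoking Proposition~\ref{prop:normalizationCn} (and Proposition~\ref{prop:normalizationCn1} in the one-arc case $s=-1/q$), that $\Gamma_{\Sigma}$ is normalized either to $|p|$ arcs of slope $\infty$ together with one closed curve, or to arcs of slopes $\{0,1,\infty\}$ with $\mathrm{mult}(0)=\mathrm{mult}(\infty)=1$.

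First I would cut $C_n(s)$ along $\Sigma$, round the edges, and record how the slope-$s$ dividing curves twist across $\partial\Sigma\times[0,1]$, i.e.\ the interval shift identifying $\Sigma_1$ with $\Sigma_0$, exactly as in the proof of Proposition~\ref{prop:positiveThickening} but now for $p<0$. In the slope-$\infty$ normalization I take the compressing disk $D_\alpha=\alpha\times[0,1]$ with $\alpha$ of slope $\infty$: then $\alpha\times\{1\}$ is disjoint from $\Gamma_1$, while $\alpha\times\{0\}$ meets $\Gamma_0=\phi_n(\Gamma_1)$ (arcs of slope $\phi_n(\infty)=n-2$) once per arc, together with the closed curve once. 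Since the total exceeds two, Theorem~\ref{thm:DiskImbalance} furnishes a bypass along $\alpha\times\{0\}$; recording the two arcs into which the closed-curve intersection splits $\alpha\times\{0\}$ and computing their imbalance $d$, whenever $d\neq1$ the dividing curves on $D_\alpha$ cannot be nested and a bypass that does not straddle the closed curve yields the desired boundary parallel arc. The $\{0,1,\infty\}$ normalization I would handle by the same disk-then-annulus scheme as case (2) of Proposition~\ref{prop:positiveThickening}, applying Theorem~\ref{thm:Imbalance} to an annulus $A_\gamma=\gamma\times[0,1]$ with $\gamma$ of slope $0$ to again produce a boundary parallel arc.

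The main obstacle is the exceptional configuration $d=1$, the analogue of the nested case $m=4k+3$ in Proposition~\ref{prop:positiveThickening}, where the dividing curves on $D_\alpha$ may nest and no single bypass is forced to produce a boundary parallel arc. The decisive point, and the reason the present statement carries no exceptions on $s$ unlike the positive case, is that only the interval shift depends on the sign of $p$ (the relevant slopes $\infty$ and $\phi_n(\infty)=n-2$ on $\Sigma_1,\Sigma_0$ are the same for $s>0$ and $s<0$). I therefore expect the shift for $s<0$ to differ from the value $2k+1$ recorded in the positive case, by the reversed orientation together with edge rounding, so that for odd $|p|$ and negative $s$ the imbalance never equals $1$. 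Failing a clean verification of $d\neq1$, I would instead follow the nested bypasses as in the positive argument and show that, for $s<0$, attaching them in sequence yields a fiber containing a boundary parallel arc rather than the overtwisted disk that arises for $s>0$. Pinning down this shift computation and ruling out the nested case for every negative slope is where the real work lies.
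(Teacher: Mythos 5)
Your overall scheme is the right one, and for $m>1$ dividing arcs it does go through essentially as in the paper: writing $s=-\frac{m}{k}$, the edge-rounding shift becomes $i\mapsto i+2k-1 \pmod{2m}$, and the imbalance on $D_\alpha$ works out to $d=\left|(m+2k-1)-(2k-1)\right|=m$, so the nested configuration is excluded whenever $m>1$ and a non-straddling bypass produces the boundary parallel arc. But this same computation defeats your key hope: $d=1$ exactly when $m=1$, i.e.\ for every slope $s=-\frac1k$, and these slopes are squarely inside the hypothesis $s<0$ (indeed they are the slopes the thickening in Lemma~\ref{lem:thickeningNegative} must pass through). So your conjecture that ``for odd $|p|$ and negative $s$ the imbalance never equals $1$'' is false, and your proposal has a genuine gap at $m=1$.

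Your fallback --- follow the nested bypasses as in the positive argument --- also does not work there, because the paper's treatment of $m=1$ is a different argument, not a variant of the nested-disk analysis. For $k>1$ one takes $\alpha$ of slope $1$ (not $\infty$), finds a bypass on $D_\alpha$; if it straddles the closed curve the resulting $\Sigma_{1/2}$ has the \emph{same} dividing set as $\Sigma_1$, and one must then run a second disk $D_\beta$ (slope $\infty$) in $\Sigma\times[\tfrac12,1]$, where the count $4k-2>2$ forces a non-straddling bypass. For $k=1$ even this fails: the disk meets the dividing set only $4$ times, one of the two possible dividing sets on $D_\alpha$ gives a bypass whose attaching arc does not lie on $\Sigma_0$, and one needs the bypass rotation of Theorem~\ref{thm:bypass-rotation} to move it onto $\Sigma_0$ before attaching. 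None of this is present in your proposal, and without it the proposition is unproved precisely on the slopes $-\frac1k$ that the rest of Section~\ref{sec:upper-bounds} relies on. (A smaller omission: in the $\{0,1,\infty\}$ case with $m=3$, the straddling bypass does not lead to a contradiction as in the positive case but to a dividing set of three arcs of slope $0$ plus a closed curve, which must be converted by $\phi_n^{-1}$ and fed back into the slope-$\infty$ case.)
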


\begin{proof}
	Let $s=-\frac{m}{k}$ for a pair of relatively prime positive integers $m, k$. Then that there are $m$ properly embedded dividing arcs on $\Sigma$. We cut $C_n(s)$ along $\Sigma$ to obtain a genus-$2$ handlebody $\Sigma \times [0,1]$ and round edges to obtain a smooth convex boundary. Let $\Sigma_i$ be $\Sigma \times \{i\}$ for $i=0,1$ and $\Gamma_i$ the dividing set on $\Sigma_i$. Then $\Gamma_0 = \phi_n(\Gamma_1)$. After rounding the edges, the entire dividing set only contains closed curves, but we will keep calling the dividing curves that pass through $\bd\Sigma$ dividing arcs for convenience.  
    
  As in Proposition~\ref{prop:positiveThickening}, the dividing arcs in $\Gamma_i$ divides $\bd\Sigma_i$ into $2m$ intervals for $i=0,1$. The dividing curves on $\partial \Sigma \times [0,1]$ connect the $i$-th interval on $\Sigma_1$ to the $(i+2k-1)$-th interval (mod $2m$) on $\Sigma_0$. 

	Suppose there is no boundary parallel dividing arc in $\Gamma_i$ for $i=0,1$. By Proposition~\ref{prop:normalizationCn} and Proposition~\ref{prop:normalizationCn1}, we need to consider three cases.

	(1) {\it $m > 1$ and $\Gamma_1$ contains $m$ arcs and one closed curve with slope $\infty$}: As in Proposition~\ref{prop:positiveThickening}, let $\alpha$ be an arc on $\Sigma_1$ with slope $\infty$ that does not intersect $\Gamma_1$. Let $D_{\alpha}:= \alpha \times [0,1]$ be a compressing disk for $\Sigma \times [0,1]$. Perturb $D_{\alpha}$ so that its boundary does not intersect any dividing curve on $\partial \Sigma \times [0,1]$. This results in a shift of the basepoints of $\alpha$ on $\Sigma_0$ by $(2k-1)$ intervals following the positive orientation of $\partial \Sigma$. Thus $\bd D_{\alpha}$ only intersects the dividing curves in $\Gamma_0$. We perturb $D_{\alpha}$ further so that it is convex with Legendrian boundary. We can also assume that $\bd D_{\alpha}$ intersects $\Gamma_0$ minimally, so it will intersect the closed dividing curve exactly once. Also, this intersection point separates $\alpha \times \{0\}$ into two sides. On one side it intersects $|2k-1|$ dividing arcs, and on the other side it intersects $|m+(2k-1)|$ dividing arcs. Since the total number of intersection points is greater than $2$, there is a bypass in $D_{\alpha}$ by Theorem~\ref{thm:DiskImbalance}. The difference between the number of intersections is:
  \begin{equation*}
    d= |(m+2k-1) - (2k-1)| = |m|.
  \end{equation*}
  Since we assume $m > 1$, we have $d \neq 1$ and the dividing curves in $D_{\beta}$ cannot be nested as shown in the right drawing of Figure~\ref{fig:disk1}. This implies that there is always a bypass which does not straddle the closed dividing curve. After attaching this bypass, we obtain an isotopic copy of $\Sigma$ containing a boundary parallel dividing arc.

	(2) {\it $m \geq 3$ and $\Gamma_1$ contains arcs with slope $\{0,1,\infty\}$ where $\mathrm{mult}(0) = \mathrm{mult}(\infty) = 1$ and $\mathrm{mult}(1)=m-2$}: Let $\beta$ be an arc on $\Sigma_1$ with slopes $\infty$ that does not intersects $\Gamma_1$ and $D_{\beta}:= \beta \times [0,1]$ a compressing disk in $\Sigma \times [0,1]$. We perturb $D_{\beta}$ so that it is convex with Legendrian boundary and does not intersect dividing curves on $\bd \Sigma \times [0,1]$. Since $\beta \times \{0\}$ intersects more than four dividing curves on $\Gamma_0$, we can find a bypass in $D_{\beta}$ whose attaching arc lies on $\beta \times \{0\}$ by Theorem~\ref{thm:DiskImbalance}. Notice that there is no bypass passing through green, black, green dividing curves in order. Therefore, the only bypass that does not produce a boundary parallel arc is the one that straddles the green dividing curve. 
	
	Suppose $m > 3$. If the bypass straddles the green dividing curve, then attaching the bypass results in $\Sigma_{1/2}$ with $\Gamma_{1/2}=\{0,\frac{1}{n-2},\frac{1}{n-3}\}$ where $\mathrm{mult}(0)=3$, $\mathrm{mult}(\frac{1}{n-3})=m-4$ and $\mathrm{mult}(\frac{1}{n-2})=1$. We can convert $\Gamma_{1/2}$ into $\{0,1,\infty\}$ by acting on $\Sigma_{1/2}$ via $\phi_n^{-1}$.  Now we cut $C_n(s)$ along $\Sigma_{1/2}$ and obtain $\Sigma\times [0,1]$ again. We relable $\Sigma \times \{i\}$ as $\Sigma_i$ for $i = \{0,1\}$. Then $\Gamma_1 = \{0,1,\infty\}$ with $\mathrm{mult}(0) = 1$, $\mathrm{mult}(1) = m-4$, $\mathrm{mult}(\infty) = 3$. Take a closed curve $\gamma$ with slope $0$ and let $A_{\gamma} := \gamma\times[0,1]$ be a properly embedded annulus in $\Sigma\times[0,1]$. We perturb $A_{\gamma}$ so that it is convex with Legendrian boundary and intersects $\Gamma_0 \cup \Gamma_1$ minimally, see Figure~\ref{fig:Sigma5}. Since $|\gamma \times \{1\} \cap \Gamma_1| = m-1$ and $|\gamma \times \{0\} \cap \Gamma_0| = m-3$, we can find a bypass whose attaching arc lies on $\Sigma_1$ by Theorem~\ref{thm:Imbalance}. Notice that we attach this bypass from the back. It is easy to check that any possible bypass attachment results in an isotopic copy of $\Sigma$ that contains a boundary parallel dividing arc.    

	Now suppose $m = 3$. If the bypass straddles the green dividing curve, attaching the bypass results in an isotopic copy of $\Sigma$ containing one closed dividing curve and three dividing arcs with slope $0$. We can covert this into the dividing set containing one closed curve and three arcs with slope $\infty$ by acting  on $\Sigma$ via $\phi_n^{-1}$. We already dealt with this in Case (1).    

	\begin{figure}[htbp]
		\begin{center}
			\begin{overpic}[tics=20]{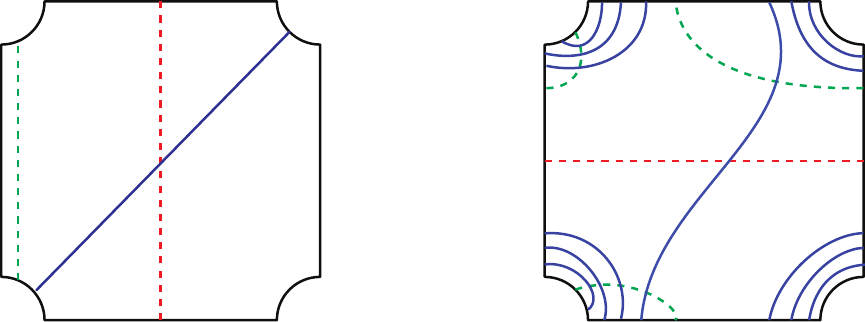}
				\put(72,-20){\large$\Sigma_1$}
				\put(335,-20){\large$\Sigma_0$}
			\end{overpic}
			\vspace{0.8cm}
			\caption{$\Sigma_1$ and $\Sigma_0$ in $C_n(-\frac12)$. The dotted lines are dividing curves and the blue lines are $\alpha \times \{1\}$ and $\alpha \times\{0\}$}
			\label{fig:Sigma7}
		\end{center}
	\end{figure}

	\begin{figure}[htbp]
		\begin{center}
			\begin{overpic}[tics=20]{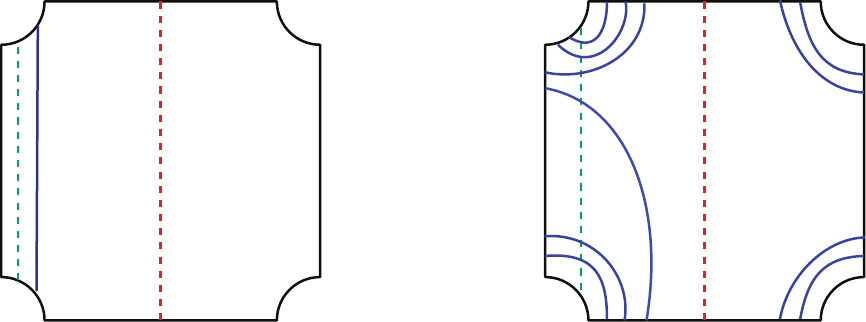}
				\put(72,-20){\large$\Sigma_1$}
				\put(335,-20){\large$\Sigma_{\frac12}$}
			\end{overpic}
			\vspace{0.8cm}
			\caption{$\Sigma_1$ and $\Sigma_{\frac12}$ in $C_n(-\frac12)$. The dotted lines are dividing curves and the blue lines are $\alpha \times \{1\}$ and $\alpha \times\{\frac12\}$}
			\label{fig:Sigma8}
		\end{center}
	\end{figure}
	
	(3) {\it $m = 1$ and there is one closed curve and one arc with slope $\infty$}: First, suppose $k > 1$. Then take an arc on $\Sigma$ with slope $1$ and let $D_{\alpha}$ be a disk in $\Sigma \times [0,1]$ as shown in Figure~\ref{fig:Sigma7}. The number of intersection between $D_{\alpha}$ and $\Gamma_0 \cup \Gamma_1$ is $4k+2$, so we can find a bypass in $D_{\alpha}$ by Theorem~\ref{thm:DiskImbalance}. If the bypass does not straddle the closed dividing curve, then attaching the bypass results in a boundary parallel dividing arc. If the bypass straddles the closed dividing curve, then attaching the bypass results in $\Sigma_{1/2}$ with the dividing set identical to $\Gamma_1$, see Figure~\ref{fig:Sigma8}. Now take an arc $\beta$ with slope $\infty$ and let $D_{\beta}$ be a compressing disk for $\Sigma \times [\frac12,1]$ as shown in Figure~\ref{fig:Sigma8}. Since the number of intersection between $D_{\beta}$ and $\Gamma_{1/2} \cup \Gamma_1$ is $4k-2$ and $k>1$, we can find a bypass in $D_{\beta}$. Here, any possible bypass does not straddle the closed dividing curve, so attaching the bypass will result in a boundary parallel dividing arc.  

	\begin{figure}[htbp]
		\begin{center}
			\begin{overpic}[tics=20]{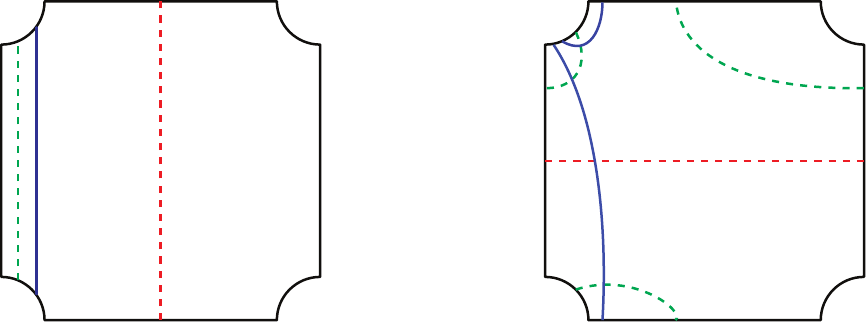}
				\put(72,-20){\large$\Sigma_1$}
				\put(335,-20){\large$\Sigma_0$}
			\end{overpic}
			\vspace{0.8cm}
			\caption{$\Sigma_1$ and $\Sigma_0$ in $C_n(-1)$. The dotted lines are dividing curves and the blue lines are $\alpha \times \{1\}$ and $\alpha \times\{0\}$}
			\label{fig:Sigma9}
		\end{center}
	\end{figure}

	\begin{figure}[htbp]
		\begin{center}
			\begin{overpic}[tics=20]{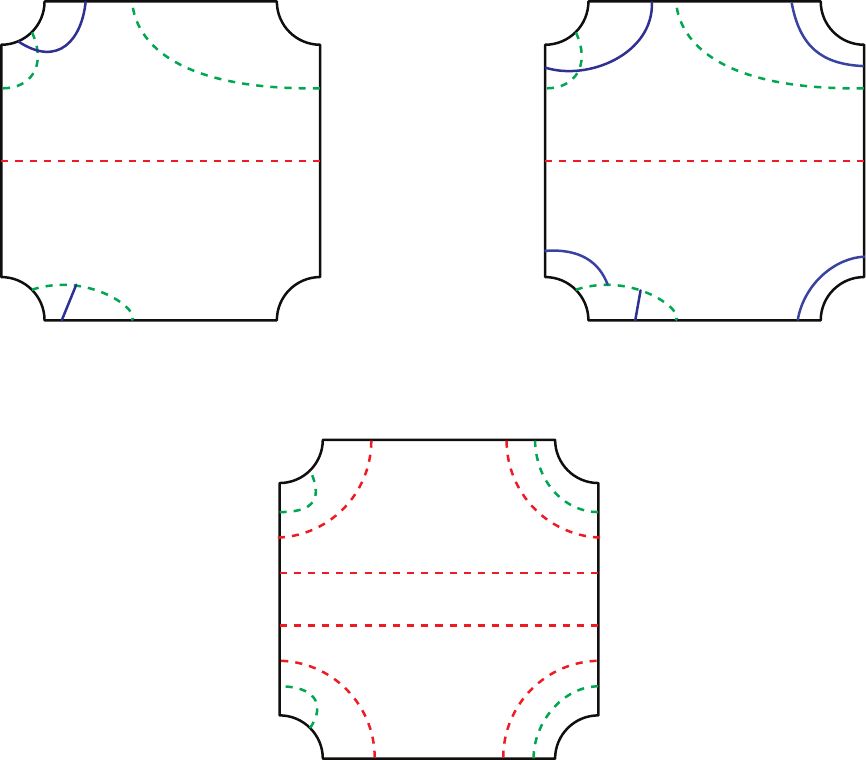}
				\put(72,195){\large$\Sigma_0$}
				\put(335,195){\large$\Sigma_0$}
				\put(204,-15){\large$\Sigma_{\frac12}$}
			\end{overpic}
			\vspace{0.8cm}
			\caption{Top left: $\Sigma_0$ in $C_n(-1)$, before the bypass rotation. Top right: $\Sigma_0$ in $C_n(-1)$, after the bypass rotation. Bottom: $\Sigma_{\frac12}$ in $C_n(-1)$, the result of the bypass attachment from the top right. The dotted lines are dividing curves and the blue lines are the attaching arcs of bypasses.}
			\label{fig:Sigma10}
		\end{center}
	\end{figure}
			
	Now suppose $k=1$. Take an arc on $\Sigma$ with slope $\infty$ and let $D_{\alpha} = \alpha \times [0,1]$ be a disk as shown in Figure~\ref{fig:Sigma9}. We call the closed dividing curve and the dividing arc on $\Sigma_0$ red and green dividing curves, respectively. Since the number of intersections between $D_{\beta}$ and $\Gamma_0 \cup \Gamma_1$ is $4$, there are two possible dividing sets for $D_{\beta}$. These two dividing sets provide two different bypasses. One bypass passes through red, green, green dividing curves in order. Attaching this bypass results in a boundary parallel dividing arc. The attaching arc of the other bypass does not lie on $\Sigma_0$. The attaching arc only passes through two dividing curves on $\Sigma_0$, see the top left drawing of Figure~\ref{fig:Sigma10}. By Theorem~\ref{thm:bypass-rotation}, we can rotate this bypass and obtain a new bypass such that its attaching arc lies on $\Sigma_0$, see the top right drawing of Figure~\ref{fig:Sigma10}. Attaching this bypass results in a boundary parallel dividing arc. 
\end{proof}

\subsection{Analyzing tight contact structures on \texorpdfstring{$C_n(\frac1k)$}{Cn(1/k)}  for \texorpdfstring{$k \leq 0$}{k <= 0}} \label{subsec:infty}
In this subsection, we will prove Lemma~\ref{lem:thickeningPositive}, Lemma~\ref{lem:tightInfty}, \ref{lem:thickeningNegative} and \ref{lem:tightReciprocal}. To do so, we need to prove Proposition~\ref{prop:normalizationCn1} first.

\begin{proof}[Proof of Proposition~\ref{prop:normalizationCn1}]
	Since there is only one dividing arc, we know $s = \frac1k$ for $k \in \mathbb{Z}$ (we consider $\infty = \frac10$). We cut $C_n(s)$ along $\Sigma$ to obtain a genus-$2$ handlebody $\Sigma \times [0,1]$. Let $\Sigma_i$ be $\Sigma \times \{i\}$ for $i=0,1$ and $\Gamma_i$ the dividing set on $\Sigma_i$. Then $\Gamma_0 = \phi_n(\Gamma_1)$. For $i=0,1$, the dividing arcs in $\Gamma_i$ divides $\bd\Sigma_i$ into two intervals, which we will label by $1$ and $2$. The dividing curves on $\partial \Sigma \times [0,1]$ connect the $i$-th interval on $\Sigma_1$ to the $(i-2k-1)$-th interval (mod $2$) on $\Sigma_0$. Notice that $i-2k-1 \equiv i+1 \pmod2$. By Proposition~\ref{prop:normalization1}, we need to consider two cases.

	(1) {\it $\Gamma_1$ consists of one closed curve and one arc with slope $r$}: Recall from Proposition~\ref{prop:normalizationCn} that we calculated the slopes $s^n_a$ and $s^n_r$ of fixed points of $\phi_n$ and showed 
	\[
		0 < s_a^n < 1/2 \;\text{ and }\; 2 < s_r^n < \infty.
	\] 
	Before we cut $C_n(s)$ along $\Sigma$, we can change $\Gamma$ by acting on $\Sigma$ via $(\phi_n)^m$ for $m \in \mathbb{Z}$. Since $\phi_n(\infty)=0$, we can assume $r$ satisfies either
	\begin{enumerate}
		\item[(a)] $-\infty < r \leq 0$, or
		\item[(b)] $s^n_a < r < 1 < s_r^n$.
	\end{enumerate}
	
	For the case (a), we know $0 < \phi_n(r) < s^n_a$ since $\phi_n(\infty)=0 < \phi_n(r)$ and $s_n^a$ is the slope of the attracting fixed point. Assume $r$ is not already $0$. Let $c$ be a closed curve on $\Sigma$ with dividing slope $\phi_n(r)$ and $A_c := c \times [0,1]$ be a properly embedded annulus in $\Sigma \times [0,1]$. We perturb $A_c$ so that it is convex with Legendrian boundary and intersects $\Gamma_0 \cup \Gamma_1$ minimally. Since $|c \times \{1\} \cap \Gamma_1| > 0$ and $|c \times \{0\} \cap \Gamma_0| = 0$, we can find a bypass whose attaching arc lies on $\Sigma_1$ by Theorem~\ref{thm:Imbalance}. We attach this bypass and keep track of dividing curves using Theorem~\ref{thm:bypass}. However, we should be careful according to Remark~\ref{rmk:convention}. First, we use the ordinary slope convention $\frac qp = \vects{p}{q}$ for $\Sigma$, so we should reverse the words ``clockwise'' and ``anticlockwise''. Also, since the bypass is attached from the back, we should reverse the words ``clockwise'' and ``anticlockwise'' again. Therefore, the slope of the diving curves changes in a clockwise direction in the Farey graph. To summarize, attaching the bypass results in an isotopic copy of $\Sigma$ with dividing slope $r'$ that is clockwise of $r$, anticlockwise of $\phi_n(r)$ and closest to $\phi_n(r)$ with an edge to $r$. Simply, $r < r'$. We cut $C_n(s)$ again along this $\Sigma$ and apply the same argument again. Repeat this procedure and we obtain an isotopic copy of $\Sigma$ with slope $0$ in the long run. By acting on $\Sigma$ via $\phi_n^{-1}$, we obtain $\Sigma$ with dividing slope $\infty = \phi_n^{-1}(0)$.     

	Now consider the case (b). In this case, we know $0 < \phi_n(r) < r < 1$ since $\phi_n(r)$ is closer to $s_a^n$, the slope of the attracting fixed point of $\phi_n$. Notice that the path in the Farey graph clockwise of $r$ and anticlockwise of $\phi_n(r)$ contains $1$ and $\infty$. Let $c$ be a closed curve on $\Sigma$ with dividing slope $\phi_n(r)$ and $A_c := c \times [0,1]$ be a properly embedded annulus in $\Sigma \times [0,1]$. By the same argument in the case (a), we can find a bypass  whose attaching arc lies on $\Sigma_1$. Again, according to Remark~\ref{rmk:convention} this bypass changes the slope of the diving curves in a clockwise direction in the Farey graph. To summarize, attaching the bypass results in an isotopic copy of $\Sigma$ with dividing slope $r'$ that is clockwise of $r$, anticlockwise of $\phi_n(r)$ and closest to $\phi_n(r)$ with an edge to $r$. We cut $C_n(s)$ again along this $\Sigma$ and apply the same argument again. Since the path in the Farey graph clockwise of $r$ and anticlockwise of $\phi_n(r)$ contains $1$, we can obtain an isotopic copy of $\Sigma$ with dividing slope $1$ by repeating this procedure. It is still true that $s^n_a < 1 < s_r^n$, so we can apply the same argument (attaching the bypass to $\Sigma_1$) again. Since $1$ and $\infty$ are connected by an edge in the Farey graph, we obtain $\Sigma$ with dividing slope $\infty$.

	\begin{figure}[htbp]
		\begin{center}
			\begin{overpic}[tics=20]{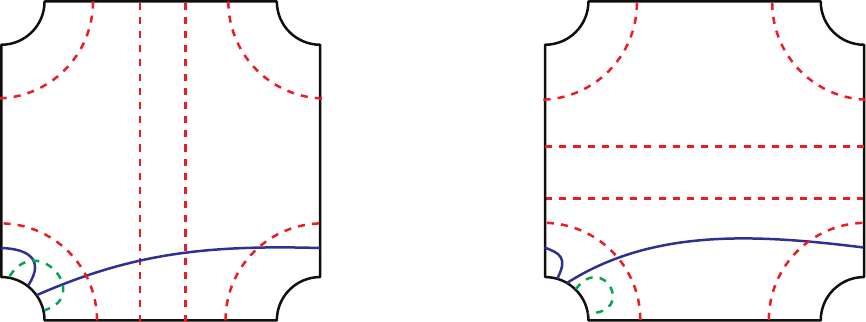}
				\put(72,-20){\large$\Sigma_1$}
				\put(335,-20){\large$\Sigma_0$}
			\end{overpic}
			\vspace{0.8cm}
			\caption{$\Sigma_1$ and $\Sigma_0$ in $C_n(\infty)$. The dotted lines are dividing curves and the blue lines are $\alpha \times \{1\}$ and $\alpha \times\{0\}$}
			\label{fig:Sigma11}
		\end{center}
	\end{figure}		

	\begin{figure}[htbp]
		\begin{center}
			\begin{overpic}[tics=20]{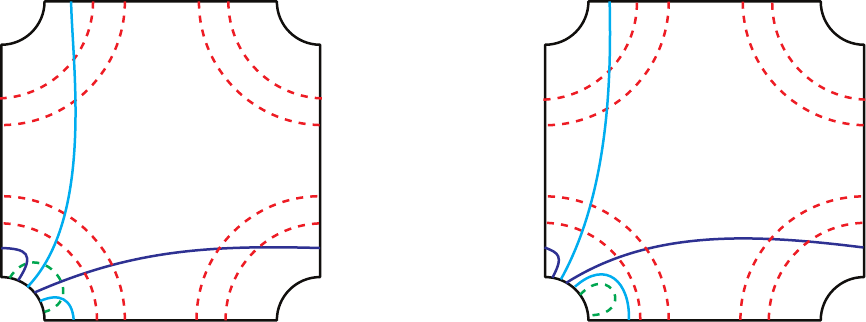}
				\put(72,-20){\large$\Sigma_1$}
				\put(335,-20){\large$\Sigma_0$}
			\end{overpic}
			\vspace{0.8cm}
			\caption{$\Sigma_1$ and $\Sigma_0$ in $C_n(\infty)$. The dotted lines are dividing curves, the blue lines are $\alpha \times \{1\}$ and $\alpha \times\{0\}$, and the sky blue lines are $\beta \times \{1\}$ and $\beta \times\{0\}$}
			\label{fig:Sigma12}
		\end{center}
	\end{figure}

	(2) {\it $\Gamma_1$ contains a boundary parallel arc}: In this case, $\Gamma_1$ may contain $j$ boundary parallel closed curves and $h$ essential closed curves, see Figure~\ref{fig:Sigma11} for example, and we need to get rid of them. Observe that $h$ is even. Let $r$ be the slope of the essential dividing curves and $\alpha$ be an arc on $\Sigma_1$ with slope $\phi(r)$ that intersects the dividing arc twice, the closed boundary parallel dividing curves $2j$ times, and the essential closed curves $h|\phi(r) \bigcdot r|$ times, see the left drawing of Figure~\ref{fig:Sigma11} for example. Observe that $|\phi(r) \bigcdot r| \geq 1$ since $\phi$ is pseudo-Anosov. Let $D_{\alpha}:= \alpha \times [0,1]$ be a compressing disk for $\Sigma \times [0,1]$. Perturb $D_{\alpha}$ so that its boundary does not intersect any dividing curve on $\partial \Sigma \times [0,1]$. This results in a shift of the basepoints of $\alpha$ on $\Sigma_0$ by $(2k+1)$ intervals following the positive orientation of $\partial \Sigma$. We perturb $D_{\alpha}$ further so that it is convex with Legendrian boundary and $\alpha \times \{0\}$ intersects boundary parallel closed dividing curves $2j$ times as shown in the right drawing of Figure~\ref{fig:Sigma11}. To summarize, $\alpha \times \{1\}$ intersects at least $(2j+h+2)$ dividing curves and $\alpha \times \{0\}$ intersects exactly $2j$ dividing curves. If $h \geq 2$, then we have $|\Gamma_1 \cap D_{\alpha}| - |\Gamma_0 \cap D_{\alpha}| \geq 4$, so we can find a bypass in $D_{\alpha}$ of which the attaching arc lies in $\alpha \times \{1\}$, see the right drawing of Figure~\ref{fig:disk2} for example. By attaching this bypass, we obtain an isotopic copy of $\Sigma$ with $2$ fewer dividing curves than the original one. We cut $C_n(s)$ along this new $\Sigma$ and repeat this until $h$ becomes $0$. 

	\begin{figure}[htbp]
		\begin{center}
			\begin{overpic}[tics=20]{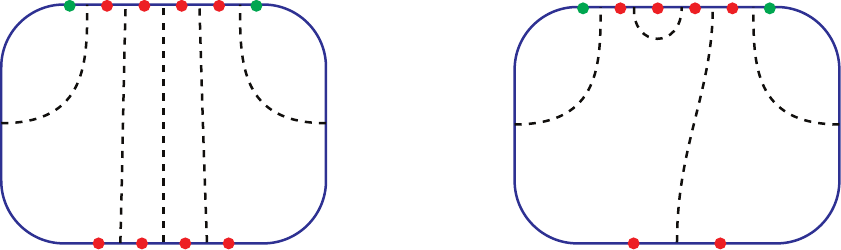}
			\end{overpic}
			\caption{Dividing curves in $D_{\alpha}$. Top sides: $\alpha\times\{1\}$. Bottom sides: $\alpha\times\{0\}$}
			\label{fig:disk2}
		\end{center}
	\end{figure}	

	Now we can assume $h=0$ and $j>0$. Here, we assume $s = \infty$. For other $s = \frac1k$, we attach a minimally twisting contact structure on $T = T^2\times [0,1]$ with dividing slopes $\infty$ and $\frac1k$ to $C_n(\frac1k)$ and obtain $C_n(\infty)$. Then remove $T$ at the end of the proof. Also, fix a sign of the bypass on $\Sigma_1$ to be negative. The argument will be the same for the positive case. Now we have 
	\[
		|\Gamma_1 \cap D_{\alpha}| - |\Gamma_0 \cap D_{\alpha}| = (2j+h+2) - 2j  = 2,
	\]
	so there is a dividing set on $D_{\alpha}$ that does not provide a bypass of which the attaching arc lies in $\alpha\times\{1\}$ as shown in the left drawing of Figure~\ref{fig:disk2}. Observe that all other dividing sets on $D_{\alpha}$ (up to isotopy) will give a bypass of which the attaching arc lies in $\alpha \times \{1\}$. Now take an arc $\beta$ on $\Sigma_1$ such that $\Sigma \setminus (\alpha\cup \beta)$ is a disk and $\beta$ intersects $\Gamma_1$ $(2j+2)$ times, see the left drawing of Figure~\ref{fig:Sigma12} for example ($j=2$). Let $D_{\beta} := \beta\times [0,1]$ be a compressing disk for $\Sigma\times[0,1]$. If we cut $\Sigma\times[0,1]$ along $D_{\alpha}$ and $D_{\beta}$ and round the edges, then we obtain a $3$-ball $B^3$ with convex boundary. Since there is a unique tight contact structure on a $3$-ball with a fixed characteristic foliation, tight contact structures on $\Sigma\times[0.1]$ are completely determined by the dividing sets on $D_{\alpha}$ and $D_{\beta}$. As discussed above, all dividing sets on $D_{\alpha}$ (\emph{resp.} $D_{\beta}$) contain a bypass of which the attaching arc lies in $\alpha\times\{1\}$ (\emph{resp.} $\beta\times\{1\}$) except for the one shown in the left drawing of Figure~\ref{fig:disk2}. Therefore, in every tight contact structure on $\Sigma \times [0,1]$, except for (possibly) one, there is a bypass for $\Sigma_1$ of which the attaching arc lies in $\alpha\times\{1\}$ or $\beta\times\{1\}$. As we observed above, attaching this bypass reduces the number of dividing curves. Denote the contact structure that might not contain such a bypass by $\xi$. By the Thurston--Bennequin inequality for convex surfaces \cite{Giroux:classification2}, there is no bypass that decreases the number of dividing curves in an $I$-invariant neighborhood of a convex surface. Since the dividing curves on $\bd \Sigma \times [0,1]$ are vertical, the contact structure $\xi$ should be contactomorphic to an $I$-invariant neighborhood of $\Sigma_1$. 

	\begin{figure}[htbp]
	\begin{center}
		\begin{overpic}[tics=20]{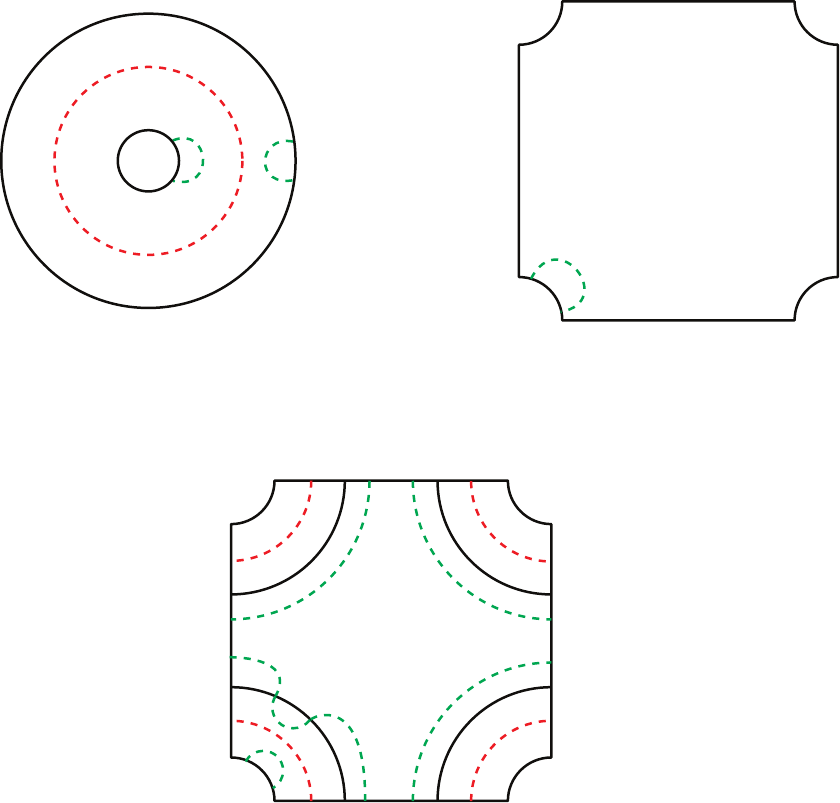}
		\end{overpic}
		\caption{Top left: annulus in $T^2 \times [0,1]$. Top right: $\Sigma$ in $C_n(\infty)$. Bottom: $\Sigma$ in $C_n(\infty)$ after gluing $T^2 \times [0,1]$. The dotted lines are dividing curves.} 
		\label{fig:torsion}
	\end{center}
	\end{figure}
	
	Now consider a convex Giroux $\frac{j}2$-torsion layer $(T^2 \times [0,1],\xi_{j/2})$ with boundary slope $\infty$. Since $T^2 \times [0,1] = A\times S^1$, we cut $T^2 \times [0,1]$ along $A$ and obtain $A \times [0,1]$. It is well known ({\it c.f.} \cite{Honda:classification2,Mathews:torsion}) that there is a convex annulus $A$ in $\xi_{j/2}$ with a boundary parallel dividing arc on each boundary component and $(j-1)$ closed boundary parallel dividing curves, and $A\times [0,1]$ is an $I$-invariant neighborhood, see the top left drawing of Figure~\ref{fig:torsion} for Giroux $1$-torsion ($j=2$). We also consider a tight contact structure $\xi_0$ on $\Sigma\times [0,1]$, an $I$-invariant neighborhood of a convex surface $\Sigma$ with a single dividing arc without any other dividing curves, see the top right drawing of Figure~\ref{fig:torsion}. With appropriate choices of the signs of the bypasses on $\Sigma$ and $A$, we obtain an $I$-invariant neighborhood of a convex surface with one dividing arc and $j$ boundary parallel closed curves by gluing two contact structures $\xi_0$ and $\xi_{j/2}$, see the bottom drawing of Figure~\ref{fig:torsion}. Therefore, $\xi$ is contactomorphic to $\xi_{j/2} \cup \xi_0$. We glue the top and bottom of ($\Sigma\times[0,1],\xi$) using $\phi_n$ and obtain $C_n(s)$. Then it clearly contains Giroux $\frac{j}{2}$-torsion. Thus we can exclude this case, and in any tight contact structure on $C_n(s)$ without boundary parallel half Giroux torsion, we can find a bypass for $\Sigma_1$ of which the attaching arc lies on $\alpha\times\{1\}$ or $\beta\times\{1\}$. By attaching this bypass we obtain an isotopic copy of $\Sigma$ that contains $2$ fewer dividing curves than $\Sigma_1$. We cut $C_n(s)$ along this new $\Sigma$ and repeat the argument until we obtain $\Sigma$ without closed dividing curves.         
\end{proof}

According to Honda~\cite{Honda:classification1}, there is a unique tight contact structure on $L(n,n-1)$ up to isotopy. We denote it by $\xi_{std}$. As explained in Section~\ref{subsec:decomp}, $K_2$ can be considered as a knot in $L(n,n-1)$. Also, as shown in Figure~\ref{fig:legendrian1}, there is a Legendrian realization $L$ of $K_2$ in $(L(n,n-1),\xi_{std})$ with $\tb = 1$ and $\rot=0$ (which can be computed by using the formula from \cite[Lemma~6.6]{LOSS:loss}). We just have proved the following lemma. 

\begin{lemma}\label{lem:K2leg}
	There is a Legendrian representative $L$ of $K_2$ in $(L(n,n-1),\xi_{std})$ such that $\tb(L) = 1$ and $\rot(L)=0$. \qed
\end{lemma}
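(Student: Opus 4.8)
The plan is to read $L$ directly off the contact surgery diagram in Figure~\ref{fig:legendrian1} and then compute its classical invariants. Recall from Section~\ref{sec:lower-bounds} that performing the $K_1$-surgery alone yields $L(n,n-1)$, and that after a left-handed Rolfsen twist and a sequence of inverse slam--dunk moves this surgery is presented as a chain of Legendrian unknots carrying contact $(-1)$-surgery coefficients, while $K_2$ is realized as a Legendrian right-handed trefoil in $(S^3,\xi_{std})$. In $(S^3,\xi_{std})$ the right-handed trefoil attains its maximal Thurston--Bennequin invariant at $\tb = 1$, $\rot = 0$, and this is the representative drawn in Figure~\ref{fig:legendrian1}; I take $L$ to be this trefoil and perform only the surgeries coming from $K_1$, leaving $L$ untouched.

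Next I would argue that these surgeries produce exactly $(L(n,n-1),\xi_{std})$ with $L$ surviving inside it. Each contact $(-1)$-surgery on the chain of unknots is a Legendrian (Weinstein) surgery, so the resulting contact manifold is Stein fillable and hence tight, and its underlying smooth manifold is $L(n,n-1)$. By Honda~\cite{Honda:classification1} the lens space $L(n,n-1)$ carries a unique tight contact structure, so the surgery produces $(L(n,n-1),\xi_{std})$. Since all of these surgeries are supported in a neighborhood of the chain of unknots, which is disjoint from the trefoil, $L$ persists unchanged as a Legendrian knot isotopic to $K_2$ in $(L(n,n-1),\xi_{std})$.

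It remains to compute $\tb(L)$ and $\rot(L)$ in the surgered manifold. Because $K_2$ is a genus one fibered knot in $L(n,n-1)$, it bounds its fiber surface and is therefore null-homologous, so both invariants are honest integers measured against the Seifert (fiber) framing. I would evaluate them with the formula of \cite[Lemma~6.6]{LOSS:loss}, which expresses the post-surgery $\tb$ and $\rot$ of $L$ in terms of their $S^3$ values together with correction terms built from the linking numbers between $L$ and the surgery curves. The crucial input is that the Whitehead link has linking number $0$ between its two components, so $L$ has vanishing linking number with the chain of unknots; consequently the Seifert framing of $K_2$ in $L(n,n-1)$ agrees with the one inherited from $S^3$, and all correction terms drop out, giving $\tb(L) = 1$ and $\rot(L) = 0$. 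The step requiring the most care is precisely this framing bookkeeping: one must confirm that the null-homology of $K_2$ together with the vanishing linking number makes the LOSS corrections vanish exactly, rather than merely shifting the invariants by a nonzero constant.
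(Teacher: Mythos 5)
Your proposal is correct and follows essentially the same route as the paper: the authors also take $L$ to be the Legendrian right-handed trefoil of Figure~\ref{fig:legendrian1}, obtain $(L(n,n-1),\xi_{std})$ from the chain of contact $(-1)$-surgeries together with Honda's uniqueness of the tight structure on $L(n,n-1)$, and compute $\tb(L)=1$, $\rot(L)=0$ via \cite[Lemma~6.6]{LOSS:loss}. Your added bookkeeping---that the corrections vanish because $K_2$ is null-homologous (bounding its fiber) and has linking number zero with the surgery chain---is exactly the computation the paper leaves implicit.
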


Now we are ready to prove the lemmas in Section~\ref{subsec:decomp}. We begin with Lemma~\ref{lem:thickeningPositive}

\begin{proof}[Proof of Lemma~\ref{lem:thickeningPositive}]
	Since $r \in \mathcal{R}_+$ and $s \in \mathcal{S}(r)$, we know $s > 2$ and $s \notin (4,5]$. By Proposition~\ref{prop:positiveThickening}, we can find a bypass for $-\bd C_n(s)$ with slope $0$. By Theorem~\ref{thm:bypass}, we can thicken $C_n(s)$ to $C_n(s')$ such that $s' > 2$ and $s' \notin (4,5]$. If $s' \neq \infty$, we can find a bypass again by Proposition~\ref{prop:positiveThickening} and repeat this until we obtain $C_n(\infty)$.
\end{proof}

\begin{proof}[Proof of Lemma~\ref{lem:tightInfty}]
	Again, after cutting $C_n(\infty)$ along $\Sigma$ and rounding the edges, we obtain a genus-$2$ handlebody with convex boundary. Let $\Sigma_i$ be $\Sigma \times \{i\}$ for $i=0,1$ and $\Gamma_i$ the dividing set on $\Sigma_i$. Then $\Gamma_0 = \phi_n(\Gamma_1)$. The dividing arc in each $\Gamma_i$ divides $\bd\Sigma_i$ into two intervals, which we will label by $1$ and $2$. The dividing curves on $\partial \Sigma \times [0,1]$ connect the $i$-th interval on $\Sigma_1$ to the $(i-1)$-th interval (mod $2$) on $\Sigma_0$. By Proposition~\ref{prop:normalizationCn1}, we need to consider two cases. 

	(1) {\it $\Gamma_1$ consists of a boundary-parallel arc without any other dividing curves}: First, we fix the sign of the bypass on $\Sigma_1$ to be negative. This will determine the signs of the regions of entire $\bd(\Sigma\times [0,1])$. Also, the relative Euler class evaluated on $\Sigma_1$ is $-2$. We observe that the dividing set on $\Sigma\times[0,1]$ is a single closed curve parallel to $\bd\Sigma$. Thus we can choose two convex compressing disks for the handlebody of which the Legendrian boundaries intersect the dividing curve exactly twice, see Figure~\ref{fig:Sigma13}. Thus there is a unique (possibly) tight contact structure on this handlebody, and thus a unique (possibly) tight contact structure on $C_n(\infty)$, which we denote by $\xi^-_{\infty}$. Now we change the sign of the bypass on $\Sigma_1$ to be positive. Then the relative Euler class evaluated on $\Sigma_1$ is $2$. By repeating the same argument, we can show that there is a unique (possibly) tight contact structure on $C_n(\infty)$, which we denote by $\xi^+_{\infty}$.   

	It follows that any tight contact structure on $C_n(\infty)$ whose relative Euler class evaluated on $\Sigma$ is $\pm2$ without boundary parallel half Giroux torsion is actually isotopic to $\xi^{\pm}_{\infty}$, respectively. Now Let $\xi^-_{-1}$ be a contact structure on $C_n(-1)$, the complement of a standard neighborhood of $S_-(S_-(L))$, where $S_-$ is the negative stabilization operator and $L$ is a Legendrian representative of $K_2$ in $(L(n,n-1),\xi_{std})$ with $\tb(L)=1$ and $\rot(L)=0$ according to Lemma~\ref{lem:K2leg}. Also, denote by $\xi^-_B$ the contact structure on a negative basic slice $B_-(\infty,-1)$. Honda \cite{Honda:classification1} showed that any basic slice is universally tight, so $\xi_B^-$ is universally tight. Also $\xi_{-1}^-$ is universally tight according to Colin's gluing theorem \cite{Colin:gluing} since the gluing surfaces only contain boundary parallel dividing arcs (this condition is called \dfn{well-groomed}). If we glue $\xi_0^-$ and  $\xi^-_B$ along the boundary torus $T_{-1}$ with dividing slope $-1$, then we obtain a tight contact structure on $C_n(\infty)$ since $T_{-1}$ is rotative and thus we can apply the gluing theorem for universally tight contact structures along a rotative torus~\cite{Colin:gluing,HKM:decomposition}. Clearly the relative Euler class evaluated on $\Sigma$ in $\xi^{-}_{0} \cup \xi^-_B$ is $-2$, so the tight contact structure $\xi^{-}_{0} \cup \xi^-_B$ is isotopic to $\xi^-_{\infty}$. By the same argument, $\xi^{+}_{0} \cup \xi^+_B$ is isotopic to $\xi^+_{\infty}$. Therefore, $(C_n(\infty),\xi^{\pm}_{\infty})$ in fact thickens to $C_n(1)$.

	\begin{figure}[htbp]
	\begin{center}
		\begin{overpic}[tics=20]{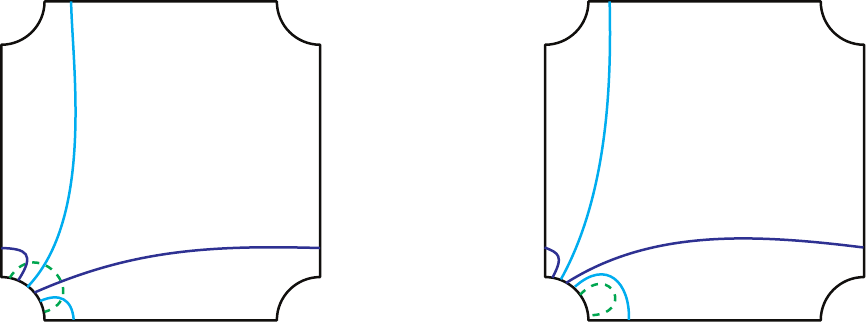}
			\put(72,-20){\large$\Sigma_1$}
			\put(335,-20){\large$\Sigma_0$}
		\end{overpic}
		\vspace{0.8cm}
		\caption{$\Sigma_1$ and $\Sigma_0$ in $C_n(\infty)$. The dotted lines are dividing curves, the blue lines are $\alpha \times \{1\}$ and $\alpha \times\{0\}$, and the sky blue lines are $\beta \times \{1\}$ and $\beta \times\{0\}$}
		\label{fig:Sigma13}
	\end{center}
	\end{figure}

	\begin{figure}[htbp]
	\begin{center}
		\begin{overpic}[tics=20]{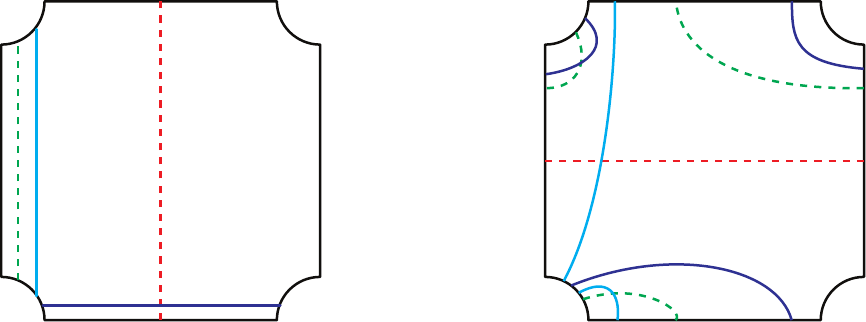}
			\put(72,-20){\large$\Sigma_1$}
			\put(335,-20){\large$\Sigma_0$}
		\end{overpic}
		\vspace{0.8cm}
		\caption{$\Sigma_1$ and $\Sigma_0$ in $C_n(\infty)$. The dotted lines are dividing curves, the blue lines are $\alpha \times \{1\}$ and $\alpha \times\{0\}$, and the sky blue lines are $\beta \times \{1\}$ and $\beta \times\{0\}$}
		\label{fig:Sigma14}
	\end{center}
	\end{figure}

	(2) {\it $\Gamma_1$ contains one arc and one closed curve with slope $\infty$}: In this case, the relative Euler class evaluated on $\Sigma$ is $0$. Fix the signs of the regions $\Sigma_1 \setminus \Gamma_1$. Take arcs $\alpha$ and $\beta$ on $\Sigma_1$ with slopes $\infty$ and $0$ as shown in the left drawing of Figure~\ref{fig:Sigma14}. Take compressing disks $D_{\alpha} = \alpha \times [0,1]$ and $D_{\beta}=\beta\times [0,1]$ for $\Sigma\times[0,1]$. Perturb the disks so that their boundaries do not intersect any dividing curve on $\partial \Sigma \times [0,1]$. This results in a shift of the basepoints of $\alpha$ and $\beta$ on $\Sigma_0$ by $-1$ intervals following the positive orientation of $\partial \Sigma$. Then $\bd D_{\alpha}$ and $\bd D_{\beta}$ intersect the dividing curves on $\bd(\Sigma \times [0,1])$ exactly twice, respectively, as shown in Figure~\ref{fig:Sigma14}. Thus there is a unique (possibly) tight contact structure on this handlebody, and thus a unique (possibly) tight contact structure on $C_n(\infty)$. Now we change the sign of the regions $\Sigma_1 \setminus \Gamma_1$. By repeating the same argument, we can show that there is a unique (possibly) tight contact structure on $C_n(\infty)$. In total, there are at most two (possibly) tight contact structures on $C(\infty)$.	
\end{proof}

\begin{remark}
	Since the dividing curves on $\Sigma$ may have some boundary twisting for the case (2), there could be infinitely many tight contact structures on $C_n(\infty)$ if we fix the boundary pointwise.
\end{remark}

Now we will prove Lemma~\ref{lem:thickeningNegative} and~\ref{lem:tightReciprocal}. The argument will be similar to the proof of \cite[Lemma~3.1 and~3.4]{CM:figure-eight}. However, since $\phi_n$ is right veering, whereas the monodromy of the figure-eight knot is not, the maximal slope to which $C_n(s)$ can thicken is different.

\begin{proof}[Proof of Lemma~\ref{lem:thickeningNegative} and~\ref{lem:tightReciprocal}]
	Since $r \in \mathbb{Q}_-$, $s \in \mathcal{S}(r)$ and $s\neq 0$, we know $s < 0$. Let $k \in \mathbb{N}$ such that $-\frac1{k-1} < s \leq -\frac1k$. Suppose $s \neq -\frac1k$. Then by Proposition~\ref{prop:negativeThickening} we can find a bypass for $-\bd C_n(s)$ with slope $0$ and we can thicken $C_n(s)$ to $C_n(s')$ where $s < s' \leq -\frac1k$. Thus we can find a bypass again by Proposition~\ref{prop:negativeThickening} and repeat this until we obtain $C_n(-\frac1k)$. Let $\Gamma_{\Sigma}$ be a dividing set for $\Sigma$ in $C_n(-\frac1k)$. By Proposition~\ref{prop:normalizationCn1}, we need to consider two cases.

	(1) {\it $\Gamma_{\Sigma}$ contains one boundary parallel arc without any other dividing curves}: First, we fix the sign of the bypass on $\Sigma$ to be negative. Then the relative Euler class evaluated on $\Sigma$ is $-2$. After cutting $C_n(-\frac 1k)$ along $\Sigma$ and rounding the edges, we obtain $\Sigma\times[0,1]$ with a smooth convex boundary. Also, the dividing set consists of a single closed curve parallel to $\bd\Sigma$. By choosing convex compressing disks for this handlebody whose Legendrian boundaries intersect the dividing curve exactly twice as shown in Figure~\ref{fig:Sigma13}, we see that there is a unique (possibly) tight contact structure on this handlebody, and thus a unique (possibly) tight contact structure on $C_n(-\frac 1k)$, which we denote by $\xi^-_{-1/k}$.
	
	It follows that any tight contact structure on $C_n(-\frac 1k)$ whose relative Euler class evaluated on $\Sigma$ is $-2$ without boundary parallel half Giroux torsion is actually isotopic to $\xi^-_{-1/k}$. Hence the complement of a standard neighborhood of $S_{-}(S_{-}(L))$ in $(L(n,n-1),\xi_{std})$ is isotopic to $\xi^-_{-1}$, where $S_{-}$ is the negative stabilization operator and $L$ is a Legendrian representative of $K_2$ from Lemma~\ref{lem:K2leg}.  Therefore, $(C_n(-1),\xi^-_{-1})$ in fact thickens to $C_n(1)$, and $C_n(-1)=T_- \cup C(\infty)$, where $T_- = B_-(-1,0) \cup B_-(0,1)$ is a union of two negative basic slices with dividing slopes $s_0=-1$ and $s_1=1$. We can further decompose $T_-$ into $T^k_- \cup B_-(-\frac1k, 0) \cup B_-(0,1)$, where $T^k_-$ is a minimally twisting contact $T^2\times[0,1]$ with dividing slopes $s_0=-1$ and $s_1=-\frac 1k$. Hence we obtain a tight contact structure on $C_n(-\frac 1k) = B_-(-\frac1k, 0) \cup B_-(0,1) \cup C(1)$. It is straightforward to check that the relative Euler class evaluated on $\Sigma$ in this $C_n(-\frac1k)$ is $-2$, so this contact structure is isotopic to $\xi^-_{-1/k}$. Therefore, $(C_n(-\frac 1k), \xi^-_{-1/k})$ also thickens to $C_n(1)$ for all $k \in \mathbb{N}$.
	
	Next, change the signs of the regions of $\Sigma$ so that the sign of the bypass is positive. In this case, the relative Euler class evaluated on $\Sigma$ is $2$. After repeating the above argument, we get a unique tight contact structure under these conditions, which we denote by $\xi^+_{-1/k}$.  Similarly, we can build the tight contact structures isotopic to $(C_n(-\frac 1k),\xi^+_{-1/k})$ that thicken to $C_n(1)$.
	
	The above arguments show that these tight contact structures on $C_n(-\frac 1k)$ are determined by the sign of the bypass on $\Sigma$, which is in turn determined by the sign on the stacks of basic slices $B_{\pm}(-\frac1k,0) \cup B_{\pm}(0,1)$, and are thus independent of the tight contact structure on $C_n(1)$.

	(2) {\it $\Gamma_{\Sigma}$ contains one closed curve and one arc with slope $\infty$}: In this case, the relative Euler class evaluated on $\Sigma$ is $0$. Fix the signs of the regions $\Sigma \setminus \Gamma_{\Sigma}$. We showed in the proof of Proposition~\ref{prop:negativeThickening} that there exists an isotopic copy of $\Sigma$ in $C_n(-\frac 1k)$ with one boundary parallel dividing arc, one closed boundary parallel dividing curve and perhaps two closed essential curves, see the bottom drawing of Figure~\ref{fig:Sigma10} for example. As in the proof of Proposition~\ref{prop:normalizationCn1}, we can remove the closed essential dividing curves and hence we can assume $\Gamma_{\Sigma}$ consists of one boundary parallel arc and one boundary parallel closed curve. This implies that the dividing set of $\bd(\Sigma\times[0,1])$ consists of exactly three dividing curves parallel to $\bd\Sigma$. By Cofer \cite[Section~3]{Cofer:handlebody}, there exist two tight contact structures on this $\Sigma\times[0,1]$ (with a fixed characteristic foliation). Thus there exist at most two tight contact structures on $C_n(-\frac1k)$.

	According to the proof of Proposition~\ref{prop:negativeThickening}, it is clear that one of these contact structures contains a boundary parallel half Giroux torsion layer. By repeating the same argument with the opposite signs on the regions on $\Sigma \setminus \Gamma_{\Sigma}$, we see that there are at most two tight contact structures on $C_n(-\frac 1k)$ without boundary parallel half Giroux torsion when the relative Euler class evaluated on $\Sigma$ is $0$, and that these contact structures are completely determined by the sign of the bypass on $\Sigma$. Denote these contact structures by $(C_n(-\frac 1k),\xi'^{\pm}_{-1/k})$.

	It follows that any tight contact structure on $C_n(-\frac 1k)$ whose relative Euler class evaluated on $\Sigma$ is $0$ without boundary parallel half Giroux torsion is actually isotopic (fixing boundary setwise) to either $\xi'^{+}_{-1/k}$ or $\xi'^{-}_{-1/k}$. Thus according to Lemma~\ref{lem:K2leg}, the complement of a standard neighborhood of $S_{+}(S_{-}(L))$ (\emph{resp.} $S_-(S_+(L))$) in $(L(n,n-1),\xi_{std})$ is isotopic to $\xi'^+_{-1}$ (\emph{resp.} $\xi'^-_{-1}$). Therefore, $(C_n(-1),\xi'^{\pm}_{-1})$ in fact thickens to $C_n(1)$, and $C_n(-1)=T'_{\pm} \cup C_n(1)$, where $T'_{\pm} = B_{\pm}(-1,0) \cup B_{\mp}(0,1)$ is a union of two negative basic slices with dividing slopes $s_0=-1$ and $s_1=1$. We further decompose $T'_{\pm}$ into $T'^k_{\pm} \cup B_{\pm}(-\frac1k, 0) \cup B_{\mp}(0,1)$, where $T'^k_{\pm}$ is a minimally twisting contact $T^2\times[0,1]$ with dividing slopes $s_0=-1$ and $s_1=-\frac 1k$. Hence we obtain tight contact structures on $C_n(-\frac 1k) = B_{\pm}(-\frac1k, 0) \cup B_{\mp}(0,1) \cup C_n(1)$. It is straightforward to check that the relative Euler class evaluated on $\Sigma$ is $0$ and the sign of the bypass is $\pm$, so these contact structures are isotopic (fixing boundary setwise) to $\xi'^{\pm}_{-1/k}$. Therefore, $(C_n(-\frac 1k), \xi'^{\pm}_{-1/k})$ also thickens to $C_n(1)$ for all $k \in \mathbb{N}$. 
	
	The above arguments show that these tight contact structures on $C_n(-\frac 1k)$ are determined by the sign of the bypass on $\Sigma$, which is in turn determined by the sign on the stacks of basic slices $B_{\pm}(-\frac1k,0) \cup B_{\mp}(0,1)$, and are thus independent of the tight contact structure on $C_n(1)$.
\end{proof}

\subsection{Tight contact structures on a solid torus}\label{subsec:solid}
In this subsection, we prove Lemma~\ref{lem:solid-torus} by counting the number of tight contact structures on $N_r(\infty)$ and $N_r(1)$.

\begin{proof}[Proof of Lemma~\ref{lem:solid-torus}]
	By Proposition~\ref{prop:solid-torus}, there exist $\Phi(r)$ tight contact structures on $N_r(\infty)$. 
	
	Now we consider $N_r(1)$. Let $r = \frac pq$ and $r \neq 1$. We can change the coordinates of $\bd N_r(1)$ by the matrix  
	\[
		\matrixp{0}{1}{-1}{1}.
	\]
	Then the dividing slope of $N_r(1)$ will become $\infty$ and the meridional slope will become $\frac{q}{q-p} = \frac{1}{1-r}$ (recall that we use the slope convention $\vects{p}{q} = \frac pq$ for a solid torus, see Remark~\ref{rmk:convention}). Therefore, there are $\Phi(\frac{1}{1-r}) = \Psi(r)$ tight contact structures on $N_r(1)$.
\end{proof}

\section{Fillability and virtual overtwistedness}

In this section, we will prove Theorem~\ref{thm:fillability} and Theorem~\ref{thm:virtually-ot}. We begin with the fillability of tight contact structures on $M(n,r)$.

\begin{proof}[Proof of Theorem~\ref{thm:fillability}]
	First, we consider the tight contact structures counted by $\Phi$. In \cite{EMT:torus}, tight contact structures on $S^3_n(T_{2,3})$ are classified and it is shown that every tight contact structure on $S^3_n(T_{2,3})$ is Stein fillable for $n \geq 5$. Recall from Section~\ref{sec:lower-bounds} that the tight contact structures counted by $\Phi$ can be constructed from the surgery diagram shown in Figure~\ref{fig:legendrian2}, which can be considered as a result of Legendrian surgeries on the knots in some tight contact structure on $S^3_{n}(T_{2,3})$. Since Legendrian surgery preserves Stein fillability, all tight contact structures counted by $\Phi$ for $n \geq 5$ are Stein fillable.
	
	Next we consider the tight contact structures counted by $\Psi$. Observe that there is a unique tight contact structure on $M(n,2)$ counted by $\Psi$ and denote it by $(M(n,2),\xi_2)$. We claim that $(M(n,2),\xi_2)$ is Stein fillable and show that any tight contact structure on $M(n,r)$ for $r > 2$ counted by $\Psi$ is obtained by some negative contact surgery on some Legendrian knot in $(M(n,2),\xi_2)$. Since negative contact surgery preserves Stein fillability, these will complete the proof.
	
	Lemma~\ref{lem:tightReciprocal} and the proof of Theorem~\ref{thm:upperBounds} implies that any contact structure on $M(n,r)$ for $n \geq 2$ and $r \geq 2$ counted by $\Psi$ can be obtained by a positive contact surgery on a Legendrian push-off of the binding of an open book decomposition of $(L(n,n-1),\xi_{std})$, the unique tight contact structure on $L(n,n-1)$. This in fact implies that any tight contact structure on $M(n,r)$ for $r > 2$ counted by $\Psi$ can be obtained by some negative contact surgery on some Legendrian knot in $(M(n,2),\xi_2)$.
 
 	To be more precise, the open book is shown in the left drawing of Figure~\ref{fig:Psi-open-book} where the monodromy of the open book is $\phi_n = D_{\alpha} D_{\beta} D_{\alpha}^{n-1}$, where $D_{\alpha}$ is a positive Dehn twist about $\alpha$. This positive contact surgery is equivalent to an inadmissible transverse $2$-surgery on the binding of the open book followed by a negative contact surgery on a Legendrian push-off of the surgery dual knot (\dfn{c.f.} \cite{Conway:transverse}). Also this inadmissible transverse $2$-surgery yields $(M(n,2),\xi_2)$. 
	
	By applying an algorithm in \cite{Conway:transverse} that converts the result of an inadmissible transverse surgery into an open book decomposition, we obtain an open book decomposition for $(M(n,2),\xi_2)$ as shown in the right drawing of Figure~\ref{fig:Psi-open-book} where the monodromy is 
	\[
		\widetilde{\phi}_n = D_{\alpha} D_{\beta} D_{\alpha}^{n-1}D_{\gamma}^{-1}\Delta 
	\] 
	and $\Delta$ is the composition of positive Dehn twists about each boundary component. Korkmaz and Ozbagci \cite{KO:factorizations} provided factorizations of $\Delta$. In particular, they showed  
	\[
		\Delta = D_{\alpha}^{-2} D_{\gamma} D_{\delta} D_{\sigma} = D_{\gamma} D_{\alpha}^{-2} D_{\delta} D_{\sigma}
	\]
	where $\delta$, $\sigma$ are some simple closed curves on the page. The second equality holds since $\alpha$ and $\gamma$ are disjoint. Now we have
	\begin{align*}
		\widetilde{\phi}_n &= D_{\alpha} D_{\beta} D_{\alpha}^{n-1} D_{\gamma}^{-1} \Delta\\ 
		&= D_{\alpha} D_{\beta} D_{\alpha}^{n-1} D_{\gamma}^{-1}D_{\gamma} D_{\alpha}^{-2} D_{\delta} D_{\sigma}\\
		&= D_{\alpha} D_{\beta} D_{\alpha}^{n-3} D_{\delta} D_{\sigma}.
	\end{align*} 
	Since $n \geq 5$, there is a factorization of $\phi \Delta$ which is a product of positive Dehn twists. Thus $(M(n,2),\xi_2)$ is Stein fillable.
\end{proof}

\begin{figure}[htbp]
	\begin{center}
		\vspace{0.5cm}
		\begin{overpic}[tics=20]{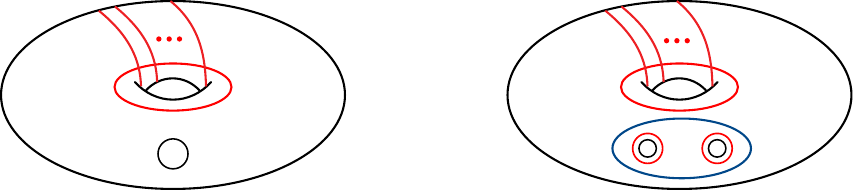}
		\put(45,70){\small$\alpha$}
		\put(120,48){\small$\beta$}

		\put(285,18){\small$\gamma$}
		\end{overpic}
		\caption{Left: an open book decomposition for $(L(n,n-1),\xi_{std})$. Right: an open book decomposition for $(M(n,2), \xi_2)$ counted by $\Psi$. The red curves represent positive Dehn twists and the blue curve represents a negative Dehn twist.} 
		\label{fig:Psi-open-book}
	\end{center}
\end{figure}

Lastly, we will consider the virtual overtwistedness of tight contact structures on $M(n,r)$.

\begin{proof}[Proof of Theorem~\ref{thm:virtually-ot}]
	First, as shown in Figure~\ref{fig:Weeks1}, $M(n,r)$ can be obtained by $r$-surgery on $K_2$ in $L(n,n-1)$ and the surgery dual knot $K_*$ represents a nontrivial element in $\pi_1(M(n,r))$. Thus there is a cover of $M(n,r)$ that unwraps $K_*$. Let $N$ be a neighborhood of $K_*$. By Honda \cite[Proposition~5.1.(2)]{Honda:classification1}, if $\xi$ is a virtually overtwisted contact structure on $N$, then any cover of $(N,\xi)$ contains an overtwisted disk. Thus if $(N,\xi)$ is virtually overtwisted, then there is an overtwisted cover of $(N,\xi)$ that embeds into some cover of $M(n,r)$. 	
	
	Now according to the proof of Theorem~\ref{thm:upperBounds}, the tight contact structures counted by $\Psi$ can be decomposed into $C(1)$ and $N_r(1)$. By Honda \cite[Proposition~5.1.(2)]{Honda:classification1}, there are at most two universally tight contact structures on $N_r(1)$. Thus for any $r \in \mathcal{R}_+ \cup \mathbb{Q}_-$, there are at most two universally tight contact structures on $M(n,r)$ counted by $\Psi$ and all others are virtually overtwisted. Also, the tight contact structures counted by $\Phi$ can be decomposed into $C(\infty)$ and $N_r(\infty)$. By Honda \cite[Proposition~5.1.(2)]{Honda:classification1} again, there are at most two universally tight contact structures on $N_r(\infty)$. Since there are two tight contact structures on $C_n(\infty)$ counted by $\Phi$, there are at most four universally tight contact structures on $M(n,r)$ for $r \in \mathcal{R}_+$ counted by $\Phi$, and all others are virtually overtwisted.
\end{proof}

\bibliography{references}{}
\bibliographystyle{plain}
\end{document}